\author{
	William Salkeld \\[8pt]
	\small School of Mathematical Sciences \\
	\small University of Nottingham, \\
	\small University Park \\
	\small Nottingham, NG7 2RD \\
	\small  william.salkeld@nottingham.ac.uk
}
\numberwithin{equation}{section}
\theoremstyle{plain}
\newtheorem{theorem}{Theorem}[section]
\newtheorem{lemma}[theorem]{Lemma}
\newtheorem{proposition}[theorem]{Proposition}
\newtheorem{corollary}[theorem]{Corollary}
\newtheorem{definition}[theorem]{Definition}
\newtheorem{remark}[theorem]{Remark}
\newtheorem{example}[theorem]{Example}
\newcommand{\bE}{\mathbb{E}}
\newcommand{\bN}{\mathbb{N}}
\newcommand{\bP}{\mathbb{P}}
\newcommand{\bR}{\mathbb{R}}
\newcommand{\bW}{\mathbb{W}}
\newcommand{\cB}{\mathcal{B}}
\newcommand{\cF}{\mathcal{F}}
\newcommand{\cI}{\mathcal{I}}
\newcommand{\cJ}{\mathcal{J}}
\newcommand{\cL}{\mathcal{L}}
\newcommand{\cP}{\mathcal{P}}
\newcommand{\fD}{\mathfrak{D}}
\newcommand{\scG}{\mathscr{G}}
\newcommand{\scN}{\mathscr{N}}
\newcommand{\scP}{\mathscr{P}}
\newcommand{\rD}{\mathbf{D}}
\newcommand{\vertiii}{{\vert\kern-0.25ex \vert\kern-0.25ex \vert}}
\DeclareMathOperator{\supp}{supp}
\DeclareMathOperator{\Shuf}{Shuf}
\DeclareMathOperator{\lip}{Lip}
\DeclareMathOperator{\lin}{Lin}
\newcommand{\A}[1]{A_{#1}[0]}
\definecolor{darkgreen}{rgb}{0,0.35,0}
	\tikzstyle{vertex} = [fill, shape=circle,inner sep=2pt,]
	\tikzstyle{edge} = [fill, line width = 0.5pt]
	\tikzstyle{zhyedge1} = [opacity=.5,fill opacity=.5, line cap=round, line join=round, line width=27pt,color=black]
	\tikzstyle{zhyedge2} = [opacity=.5,fill opacity=.5, line cap=round, line join=round, line width=25pt,color=white]
	\tikzstyle{hyedge1} = [opacity=.5,fill opacity=.5, line cap=round, line join=round, line width=27pt]
	\tikzstyle{hyedge2} = [opacity=.5,fill opacity=.5, line cap=round, line join=round, line width=25pt, color=white]
	\tikzstyle{vertexS} = [fill, shape=circle,inner sep=1pt,]
	\tikzstyle{edgeS} = [fill, line width = 0.25pt]
	\tikzstyle{zhyedge1S} = [opacity=.5,fill opacity=.5, line cap=round, line join=round, line width=12pt,color=black]
	\tikzstyle{zhyedge2S} = [opacity=.5,fill opacity=.5, line cap=round, line join=round, line width=10pt,color=white]
	\tikzstyle{hyedge1S} = [opacity=.5,fill opacity=.5, line cap=round, line join=round, line width=12pt]
	\tikzstyle{hyedge2S} = [opacity=.5,fill opacity=.5, line cap=round, line join=round, line width=10pt, color=white]
\title{Higher order Lions-Taylor expansions}
\begin{document}
	
	\maketitle
	
	\begin{abstract} 
		In this paper, we provide some of the necessary mathematics to describe higher order Lions-Taylor expansions. The Lions derivative of a functional on the Wasserstein space of measures quantifies infinitesimal perturbations on measures in terms of infinite variation on a linear space of random variables. 
		
		The two contributions of this paper are establishing the link between partitions of ordered sets and the terms of a Lions-Taylor expansion, and explicit Lions-Taylor expansions and remainder terms for functionals of a spatial and measure variable. 
	\end{abstract} 
	
	
	{\bf Keywords:} Lions-Taylor expansions
	
	\vspace{0.3cm}
	
	\noindent
	{\bf 2020 AMS subject classifications:}\\
	Primary: 60Hxx 
	\quad
	Secondary: 60L30, 46T20
	
	
	\noindent
	{\bf Acknowledgements}: William Salkeld wishes to thank the London Mathematical Society for the award of an \emph{Early Career Fellowship} (ECF-1920-29) which facilitated this research.
	
	Further, William Salkeld was supported by MATH+ project AA4-2 and by the US Office of Naval Research under the Vannevar Bush Faculty Fellowship N0014-21-1-2887.  
	
	\setcounter{tocdepth}{2}
	\tableofcontents
	\newpage
	
	\section{Introduction}
	
	This paper finds its origin in the theory of probabilistic rough paths. First introduced in \cite{2019arXiv180205882.2B}, a probabilistic rough path intertwines the analytic and probabilistic properties of a driving signal to parsimoniously capture both individual and collective features, and their actions on dynamic systems. At their heart, a probabilistic rough path is an `\emph{abstract Taylor expansion}' (also referred to as a regularity structure \cite{hairer2014theory}) on the Wasserstein space of measures. The concept of probabilistic rough paths was generalised beyond the \emph{level 2} case where terms of the probabilistic rough path describe the increments of the path and its iterataed integrals in \cite{2021Probabilistic},  \cite{salkeld2021Probabilistic2} and the subsequently accepted \cite{salkeld2022ExamplePRP}. To achieve this, higher order Lions-Taylor expansions were required along with explicit upper bounds for their remainder terms. 
	
	This paper serves as a more accessible and complete introduction to the Lions calculus. However, it will not address any details of regularity structures or probabilistic rough paths but instead should be treated as a reference that we hope other mathematicians will find useful and informative. 
	
	\subsection{Motivation}
	
	Motivated by the McKean-Vlasov differential equation 
	\begin{equation}
		\label{eq:meanfield:equation}
		dX_{t} = f\Big(X_{t}, \cL^{X}_{t} \Big) dW_{t} \quad \cL_t^X = \bP \circ \big( X_t \big)^{-1}
	\end{equation}
	and the associated particle system
	\begin{equation}
		\label{eq:particle:system}
		dX_{t}^{i, N} = f\Big( X_{t}^{i, N}, \sum_{j=1}^N \delta_{X_t^{j, N}} \Big) dW_{t}^{i, N} \quad i \in \{1, ..., N \}
	\end{equation}
	we want to consider some Taylor expansion for a function
	$$
	f: \bR^e \times \cP_2(\bR^e) \to \lin(\bR^d, \bR^e)
	$$
	where $e$ is the dimension of the solution process and $d$ is the dimension of the driving signal. However, to streamline the notation somewhat for the reader, in this section we will simply consider 
	$$
	f: \bR^e \times \cP_2(\bR^e) \to \bR^d. 
	$$
	We emphasise that this does not change the mathematics beyond the dimension of the associated vector spaces. 
	
	Taylor's Theorem is a well-known result which states that for a function $f$ that is $n$ times differentiable, we have
	$$
	f(y) - f(x) = \sum_{k=1}^n \frac{\nabla^k f(x)}{k!} \Big[ (y-x)^{\otimes k} \Big] + R_n^{x, y}\big( f \big)
	$$ where the remainder term
	\begin{align*}
		R_n^{x, y}\big( f \big)=& \tfrac{1}{(n-1)!} \int_0^1 \Big( \nabla^n f\big( x + \xi(y-x) \big) - \nabla^n f\big( x \big) \Big) (1-\xi)^{n-1} d\xi \cdot (y-x)^{\otimes n}  
		\\
		=& O\Big( |y-x|^{n+1} \Big).
	\end{align*}
	Our objective is to provide a similar Theorem for functionals depending on a measure argument. 
	
	Throughout this paper, the derivatives we consider are thus constructed on the space $\cP_{q}(\bR^e)$ for $q=2$, the so-called `Wasserstein space' of probability measures with a finite second moment. For any $q \geq 1$, $\cP_q(\bR^e)$ can be equipped with the $\bW^{(q)}$-Wasserstein distance, defined by:
	\begin{equation}
		\label{eq:WassersteinDistance}
		\bW^{(q)}(\mu,\nu) = \inf_{\Pi \in \cP_{q}(\bR^e \oplus \bR^e)}
		\biggl( \int_{\bR^e \oplus \bR^e}
		\big| y-x \big|^q
		d \Pi(x,y) \biggr)^{1/q},
	\end{equation}
	the infimum being taken with respect to all the probability measures $\Pi$ on the product space $\bR^e \oplus \bR^e$ with $\mu$ and $\nu$ as respective $e$-dimensional marginal laws.
	
	\subsection{Previous work}
	
	Higher order Taylor expansions are central to approximation techniques throughout the mathematical sciences. Therefore, it is perfectly natural to desire a differential calculus on the space of measures when attempting to approximate the dynamics of large populations and their associated mean-field limits. The origins of this philosophy can be found in \cite{Jordan1998variation} where the connection between the Fokker-Planck equations and gradient flows on the Wasserstein space is first established. We refer the reader to the monograph \cite{Ambrosio2008Gradient} for a complete overview of the subject. 
	
	The classical difference of increments approach is used in \cite{buckdahn2017mean} to derive a PDE over the Wasserstein space of measures under the strong assumption that second order Fr\'echet derivatives exist. In \cite{chassagneux2014classical}, similar results are derived under weaker assumptions using a projection over empirical measures. Both of these results, along with many others, have been neatly reviewed in \cite{CarmonaDelarue2017book1}*{Chapter 5}. For this work, we address the case where the measure is an empirical measure but we do not use this to improve the assumptions on our work and leave this as an open problem for another day. More recently, \cite{dos2022Ito} presents a It\^o-Wentzell-Lions formula for measure dependent random fields and \cite{Ren2019Bismut} uses Lions calculus to solve a Bismut formula for the change in weak error of an SDE with respect to a change in the distribution of the initial condition. 
	
	To the best of our knowledge, this Taylor expansion for the Lions derivative is new (at least in this form, see \cite{TseHigher2021} for another formulation), and we strongly believe it to have its own interest beside the specific application that we address here. In this regard, a striking fact in our analysis is the form of the expansion itself: it is encoded by means of partition sequences that are used to encode grafting operations of Lions trees. 
	
	\subsection{Contributions}
	The core contribution of this work is Theorem \ref{theorem:LionsTaylor2} which provides a concise formulation for the Lions-Taylor expansion of a smooth function of two variables, a spatial and measure variable. Note that the choice of $\alpha, \beta$ and $\gamma$ allows for one to take different numbers of derivatives in each variable and this leads to the highly involved nature of the remainder term. In particular, we provide explicit remainder terms (see Equation \eqref{eq:theorem:LionsTaylor2_remainder1}) and asymptotic upper bounds. 
	
	Further, we provide a link between iterative Lions derivatives and partitions of ordered sets in Lemma \ref{Lemma:Bijection-Partition-simple}. While not explored in this work, this observation is central to \cite{salkeld2022LionsTrees} and the construction of coupled bialgebras. 
	
	\subsection*{Notations}
	\label{subsection:Notation}
	
	Let $\bN$ be the set of positive integers and $\bN_{0}=\bN \cup \{0\}$. Let $\bR$ be the field of real numbers and for $d \in \bN$, let $\bR^d$ be the $d$-dimensional vector space over the field $\bR$. For vector spaces $U$ and $V$, we define $\lin(U, V)$ to be the collection of linear operators from $U$ to $V$. Let $\oplus$ and $\otimes$ be the direct sum and tensor product operations. 
	
	For a vector space $U$, let $\cB(U)$ be the Borel $\sigma$-algebra. Let $(\Omega, \cF, \bP)$ be a probability space. For $q \in [1, \infty)$, let $L^q(\Omega, \cF, \bP; U)$ be the space of $q$-integrable random variables taking values in $U$. When the $\sigma$-algebra is not ambiguous, we will simply write $L^q(\Omega, \bP; U)$. Further, let $L^0(\Omega, \bP; U)$ be the space of measurable mappings $(\Omega, \cF) \mapsto (U, \cB(U))$. 
	
	For a set $\scN$, we call $2^\scN$ the collection of subsets of $N$ and $\scP(\scN)$ the set of all partitions of the set $\scN$. This means $\scP(\scN) \subseteq 2^{2^\scN}$. A partition $P \in \scP(\scN)$
	if and only if the following three properties are satisfied:
	$$
	\forall x \in \scN, \quad \exists p \in P: x \in p; 
	\qquad
	\forall p, q \in P, \quad p \cap q = \emptyset; 
	\qquad
	\emptyset \notin P.
	$$

	\section{Taylor expansions over the Wasserstein space}
	\label{section:TaylorExpansions}
	
	We build a series of differential operators on the $2$-Wasserstein space. For a function $f:\cP_2(\bR^e) \to \bR^d$, we consider the canonical lift $F: L^2(\Omega, \cF, \bP; \bR^e) \to \bR^d$ defined by $F(X) = f( \bP\circ X^{-1})$. We say that $f$ is $L$-differentiable at $\mu$ if $F$ is Fr\'echet differentiable at some point $X$ such that $\mu = \bP\circ X^{-1}$. Denoting the Fr\'echet derivative by $DF$, it is now well known (see for instance \cite{GangboDifferentiability2019} that $DF$ is a $\sigma(X)$-measurable random variable of the form $DF(\mu, \cdot):\bR^e \to \lin(\bR^e, \bR^d)$ depending on the law of $X$ and satisfying $DF(\mu, \cdot) \in L^2\big( \bR^e, \cB(\bR^e), \mu; \lin(\bR^e, \bR^d) \big)$. We denote the $L$-derivative of $f$ at $\mu$ by the mapping $\partial_\mu f(\mu)(\cdot): \bR^e \ni x \to \partial_\mu f(\mu, x) \in \lin(\bR^e, \bR^d)$ satisfying $DF(\mu, X) = \partial_\mu f(\mu, X)$. 
	
	This derivative is known to coincide with the so-called Wasserstein derivative, as defined in for instance \cite{Ambrosio2008Gradient}, \cite{CarmonaDelarue2017book1} and \cite{GangboDifferentiability2019}. As we explained in the introduction, Lions' approach is well-fitted to probabilistic approaches for mean-field models since, very frequently, we have  a `canonical' random variable $X$ for representing the law of a given probability measure $\mu$. 
	
	\begin{example}
		\label{example:Simple_Lions1}
		As a simple example, consider the measure functional
		\begin{equation}
			\label{eq:example:Simple_Lions1.1}
			f(\mu) = \int k(x) d\mu(x) \quad \mbox{so that} \quad F(X) = \bE \Big[ k\big( X(\omega) \big) \Big]. 
		\end{equation}
		For some $h\in L^2(\Omega, \cF, \bP; \bR^e)$, we have that
		$$
		F(X+h) - F(X) = \bE\Big[ k\big( X(\omega) + h(\omega) \big) - k\big( X(\omega) \big) \Big]
		$$
		Then the Gateaux derivative in direction $h$ is
		$$
		DF(X) [h] = \bE\Big[ \nabla k(X) \cdot h \Big]
		$$
		By extending this to a continuous linear operator over the whole space and using the duality identity that $L^2(\Omega, \bP; \bR^e)^* = L^2(\Omega, \bP; \bR^e)$, we get that the Fr\'echet derivative is the $L^2(\Omega, \bP; \bR^e)$ valued function
		$$
		DF(X) = \nabla k(X). 
		$$
		Then the Lions derivative of $f$ is
		$$
		\partial_\mu f(\mu)(x) = \nabla k(x). 
		$$
		Observe that $\partial_\mu f(\mu)(x)$ is independent of $\mu$. 
		
		More generally, the measure functional
		\begin{equation}
			\label{eq:example:Simple_Lions1.2}
			f(\mu) = \underbrace{\int ... \int}_{\times n} k\Big( x_1, ..., x_n \Big) d\mu(x_1) \times ... \times d\mu(x_n) 
		\end{equation}
		has Lions derivative
		$$
		\partial_\mu f(\mu)(x) = \sum_{i=1}^n \underbrace{\int ... \int}_{\times n} \nabla_{x_i} k\Big( x_1, ..., x_n \Big) d\mu(x_1)... d\mu(x_{i-1}) d\delta_x(x_i) d\mu(x_{i+1}) ... d\mu(x_n). 
		$$
		
		Further, for $N\in \bN$, let $x_1, ..., x_N \in  \bR^e$ and let $\boldsymbol{x} = (x_1, ..., x_N)$. We define $\overline{f}: (\bR^e)^{\oplus N} \to \bR$ by
		\begin{equation}
			\label{eq:Empirical-Dist}
			\overline{f}\big( \boldsymbol{x} \big) = \overline{f}\Big( (x_1, ..., x_N) \Big) = f\Big( \bar{\mu}_N[x] \Big) 
			\quad \mbox{where}\quad
			\bar{\mu}_N\big[ \boldsymbol{x} \big] = \tfrac{1}{N} \sum_{j=1}^N \delta_{x_j}
		\end{equation}
		Then for any choice of $i\in \{1, ..., N\}$
		\begin{equation}
			\label{eq:Empirical-1Deriv}
			\nabla_i \overline{f}\big( \boldsymbol{x} \big) = \nabla_{x_i} \overline{f}\big( (x_1, ..., x_N) \big) = \tfrac{1}{N} \partial_\mu f(\bar{\mu}_N, x_i). 
		\end{equation}
		In particular, we can see there is a deep connection between the free variable of the Lions derivative and the directional component of the vector containing each element of the particle system. 
	\end{example}
	
	The second order derivatives are obtained by differentiating $\partial_{\mu} f$ with respect to $x$ (in the standard Euclidean sense) and $\mu$ (in the same Lions' sense). The two derivatives $\nabla_{x} \partial_{\mu} f$ and $\partial_{\mu} \partial_{\mu} f$ are thus very different functions: The first one is defined on ${\cP}_{2}({\bR}^e) \times {\bR}^e$ and writes $(\mu,x) \mapsto \nabla_{x} \partial_{\mu} f(\mu,x)$ whilst the second one is defined on ${\cP}_{2}({\bR}^e) \times {\bR}^e \times {\bR}^e$ and writes $(\mu,x,x') \mapsto \partial_{\mu} \partial_{\mu} f(\mu,x,x')$. The $e$-dimensional entries of $\nabla_{x} \partial_{\mu} f$ and $\partial_{\mu} \partial_{\mu} f$ are called here the \textit{free} variables, since they are integrated with respect to the measure $\mu$ itself. In words, $x$ is the free variable of $\partial_{\mu} f$ and $(x,x')$ are the free variables of $\partial_{\mu} \partial_{\mu} f$. Accordingly, the quadratic form $L^2(\Omega,\cF,\bP; \bR^e)$ associated on with respect to these two second-order derivatives is
	\begin{align*}
		L^2(\Omega,\cF,\bP; \bR^e) \ni X \mapsto &\bE^1 \Big[ \nabla_{x} \partial_{\mu} f \big( \mu, X(\omega_1) \big) \cdot X(\omega_1) \otimes X(\omega_1) \Big] 
		\\
		&+ \bE^{1,2} \Big[ \partial_{\mu} \partial_{\mu}f \big( \mu, X(\omega_1), X(\omega_2) \big) \cdot X(\omega_1) \otimes X(\omega_2) \Big]. 
	\end{align*}
	In the first term of the right-hand side, the expectation makes sense if 
	$$
	\nabla_{x} \partial_{\mu}f(\mu,X) \in L^\infty\Big( \Omega,\cF,\bP; \lin\big( (\bR^e)^{\otimes 2}, \bR^d\big) \Big),
	$$
	which is the case if $\partial_{\mu} f$ is Lipschitz continuous in $x$.
	
	\begin{example}
		\label{example:Simple_Lions2}
		Recalling the functional defined in Equation \eqref{eq:example:Simple_Lions1.1}, we note that
		$$
		\partial_\mu \partial_\mu f( \mu)(x, x') = 0 
		\quad \mbox{and}\quad
		\nabla_{x} \partial_\mu f( \mu)(x) = \nabla_x^2 k(x). 
		$$
		
		Similarly, for $n\geq 2$ the measure functional defined in Equation \eqref{eq:example:Simple_Lions1.2} satisfies
		\begin{align*}
			&\partial_\mu \partial_\mu f( \mu)(x, x') 
			= 
			\sum_{\substack{i, j=1 \\ i\neq j}}^{n} \underbrace{\int ... \int}_{\times n} \nabla_{x_i} \nabla_{x_j} k \Big( x_1, ..., x_n \Big) d\mu(x_1) ... d \delta_{x}(x_i) ... d\delta_{x'}(x_j) ...d\mu(x_n)
			\\
			&\nabla_x \partial_\mu f( \mu)(x) 
			= 
			\sum_{i=1}^n \underbrace{\int ... \int}_{\times n} \nabla_{x_i}^2 k \Big( x_1, ..., x_n\Big) d\mu(x_1)... d\delta_{x}(x_i) ... d\mu(x_n)
		\end{align*}
		
		Finally, following on from Equation \eqref{eq:Empirical-Dist} for any $i, j\in \{1, ..., N\}$ we have that 
		\begin{align}
			\nonumber
			\nabla_{ij} \overline{f} \big( \boldsymbol{x} \big) &= \nabla_{x_j} \nabla_{x_i} \overline{f}\big( (x_1, ..., x_N) \big) 
			\\
			\label{eq:Empirical-2Deriv}
			&= \tfrac{1}{N^2} \partial_\mu \partial_\mu f\Big( \bar{\mu}_N\big[ \boldsymbol{x} \big], x_i, x_j \Big) + \tfrac{1}{N} \nabla_{x} \partial_\mu f\Big( \bar{\mu}_N\big[ \boldsymbol{x} \big] , x_i \Big) \delta_{i=j}. 
		\end{align}
	\end{example}
	
	Despite the obvious differences between the two second order derivatives, both capture necessary information for the Taylor expansion and we want to find a common system of notation that easily extends to higher order derivatives. This leads us to Definition \ref{def:a} below, the principle of which can be stated as follows for the first and second order derivatives: The derivative symbol $\partial_{\mu}$ can be denoted by $\partial_{1}$ and then the two derivative symbols $\nabla_{x} \partial_{\mu}$ and $\partial_{\mu} \partial_{\mu}$ can be respectively denoted by $\partial_{(1,1)}$ and $\partial_{(1,2)}$. In the first case, the length of the index is 1, hence indicating that the derivative is of order 1. In the other two cases, the length of the vector-valued index is 2, indicating that the derivative is of order 2. Also, in the notation $\partial_{(1,1)}$, the repetition of the index $1$ indicates that we use the same free variable for the second order derivative, or equivalently that the second derivative has to be $\nabla_{x}$. In the notation $\partial_{(1,2)}$, the fact that the second index (in $(1,2)$) is different from the first one says that we use a new free variable for the second order derivative, which, in turn, must be $\partial_{\mu} \partial_{\mu}$. 
	
	\subsection{The 1-Lip sup envelope and partition sequences}
	\label{subsection:1-Lip_sup_envelope}
	
	Motivated by Example \ref{example:Simple_Lions2}, we introduce an abstraction that will allow us to capture the properties of higher order Lions derivatives beyond order 2. 
	
	\begin{definition}
		\label{def:a}
		The sup-envelope of an integer-valued sequence $(a_{k})_{k=1,...,n}$ of length $n$ is the non-decreasing sequence $(\max_{l=1,...,k} a_{l})_{k=1,...,n}$. The sup-envelope is said to be $1$-Lipschitz (or just $1$-Lip) if, for any $k \in \{2,...,n\}$, 
		\begin{equation*}
			\max_{l=1,...,k} a_{l} \leq 1+ \max_{l=1,...,k-1} a_{l}.
		\end{equation*}
		We call $A_{n}$ the collection of all ${\bN}$-valued sequences of length $n$, with $a_{1}=1$ as initial value and with a 1-Lip sup-envelope. Thus $A_n$ is the collection of all sequences $(a_k)_{k=1, ..., n} \in A_n$ taking values on $\{1, ..., n\}$ such that 
		$$
		a_1 = 1, \quad a_k \in \Big\{1, ..., 1+ \max_{l=1, ..., k-1} a_l \Big\}. 
		$$
		We refer to 
		$$
		A^n = \bigcup_{i=0}^n A_i, \quad A = \bigcup_{i=0}^{\infty} A_i
		$$
		as the collection of \emph{partition sequences of length at most $n$} and the collection of \emph{partition sequences}. We use the convention that $A_0 = \{\emptyset\}$. 
		
		Given $a\in A_n$, we denote 
		$$
		|a| = n \quad \mbox{and} \quad m[a] = \max_{i=1, ..., |a|} a_i.
		$$ 
		We denote $\llbracket a\rrbracket$ to be the equivalence class of all sequences such that
		$$
		(b_i)_{i=1, ..., n} \in \llbracket a\rrbracket \quad \iff \quad \Big\{ b^{-1}[c]: c \in \{b_i:i=1, ..., n\} \Big\} = \Big\{ a^{-1}[j]: j=1, ..., m[a] \Big\}. 
		$$ 
	\end{definition}
	
	\begin{example}
		Following Definition \ref{def:a}, we have that
		\begin{align*}
			A_0 =& \Big\{ \emptyset \Big\}, \quad A_1 = \Big\{ (1)\Big\}, \quad A_2 = \Big\{ (1,1), (1,2) \Big\}, 
			\\
			A_3 =& \Big\{ (1,1,1), (1,1,2), (1,2,1), (1,2,2), (1,2,3) \Big\}, 
			\\
			A_4 =& \Big\{ (1,1,1,1), (1,1,1,2), (1,1,2,1), (1,1,2,2), (1,1,2,3), (1,2,1,1), (1,2,1,2), 
			\\
			& \quad (1,2,1,3), (1,2,2,1), (1,2,2,2), (1,2,2,3), (1,2,3,1), (1,2,3,2), (1,2,3,3), (1,2,3,4) \Big\}, 
		\end{align*}
		and so on. 
	\end{example}
	
	Observe that by construction, an element $a=(a_{i})_{i=1,...,n}$ is a surjective mapping from $\{1,...,n\}$ onto $\big\{ 1, ..., \max[a] \big\}$. However, it must be stressed that the representation of the arrival set does not matter so much for our purpose. In short, any other arrival set of cardinality $m[a]$ could be used in our analysis. In fact, what really matters in our definition of a sequence $a=(a_{i})_{i=1,...,n} \in A_{n}$ are the repetitions, since they permit us to distinguish between \textit{already used} free variables and \textit{new} free variables generated by the application of another Lions derivative. This idea may be formalised by identifying integer-valued sequences that can be labelled by the same element of $A_{n}$. 
	For example
	$$
	(1, 2), \  (2, 1), \  (4, 5) \in \bigl\llbracket (1,2)\bigr\rrbracket.  
	$$
	
	\begin{lemma}
		\label{Lemma:Bijection-Partition-simple}
		Let $n\in \bN$. Then there is a bijection between the set of partition sequences 
		$$
		A_n \quad\mbox{and}\quad \scP\big( \{1, ..., n\}\big).
		$$
	\end{lemma}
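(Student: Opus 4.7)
The plan is to exhibit an explicit bijection in both directions and verify both compositions are the identity.

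First I would define the forward map $\Phi \colon A_n \to \mathscr{P}\bigl(\{1,\dots,n\}\bigr)$ by
\[
\Phi(a) \;=\; \bigl\{\, a^{-1}[j] : j = 1,\dots, m[a] \,\bigr\}.
\]
The verification that $\Phi(a)$ is indeed a partition is routine: every index $i \in \{1,\dots,n\}$ lies in $a^{-1}[a_i]$; the fibres are pairwise disjoint; and surjectivity of $a$ onto $\{1,\dots,m[a]\}$, which follows from $a_1=1$ together with the 1-Lip property applied inductively, rules out empty blocks.

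Next I would define the candidate inverse $\Psi \colon \mathscr{P}(\{1,\dots,n\}) \to A_n$ as follows. Given $P \in \mathscr{P}(\{1,\dots,n\})$, order the blocks of $P$ by their minimum element: write $P = \{B_1, B_2, \dots, B_r\}$ with $\min B_1 < \min B_2 < \dots < \min B_r$. Then set $\Psi(P)_i = j$ whenever $i \in B_j$. I would check that $\Psi(P) \in A_n$ by two observations: (i) $1 \in B_1$ since $\min B_1 = 1$, hence $\Psi(P)_1 = 1$; (ii) for each $k$, the number $\max_{l \le k} \Psi(P)_l$ equals the number of blocks of $P$ that meet $\{1,\dots,k\}$, because the blocks were labelled precisely in order of first appearance. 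Passing from $k-1$ to $k$, either $k$ joins a block already met, in which case the maximum is unchanged, or $k$ is the minimum of a fresh block $B_{m+1}$, in which case the maximum increments by exactly $1$. This is the 1-Lip property.

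Finally I would verify that $\Phi \circ \Psi$ and $\Psi \circ \Phi$ are the identity. For $\Phi \circ \Psi = \mathrm{id}$: given $P$ with blocks labelled $B_1,\dots,B_r$ as above, the fibre $\Psi(P)^{-1}[j]$ is exactly $B_j$, so $\Phi(\Psi(P)) = P$. For $\Psi \circ \Phi = \mathrm{id}$: given $a \in A_n$, the fibres $a^{-1}[1], a^{-1}[2], \dots, a^{-1}[m[a]]$ are already indexed in order of first appearance of the corresponding label in $a$ (again by $a_1=1$ and the 1-Lip property), so $\min a^{-1}[1] < \min a^{-1}[2] < \dots < \min a^{-1}[m[a]]$, meaning the re-labelling performed by $\Psi$ recovers exactly the original sequence $a$. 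No step is really hard; the only place requiring a moment of care is the equivalence between the 1-Lip sup-envelope condition on $a$ and the fact that the blocks of the associated partition receive labels in the canonical order induced by their minima, which is exactly the correspondence that makes $\Phi$ and $\Psi$ mutual inverses.
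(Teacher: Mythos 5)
Your proposal is correct and follows essentially the same route as the paper: the same forward map $a \mapsto \{a^{-1}[j] : j = 1, \dots, m[a]\}$, and the same inverse construction labelling the blocks of a partition in order of first appearance (equivalently, by their minima) and reading off the label sequence. The only cosmetic difference is that you verify the two compositions are the identity, whereas the paper proves injectivity by a separate induction and then surjectivity by the same block-labelling construction; the content is identical.
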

	
	\begin{proof}
		For $a \in A_{n}$, we associate the collection of sets
		$$
		p_{k}:=a^{-1}[k] = \big\{i \in \{1, ..., n\} : a_{i} = k \big\}, \quad k \in \{1, ..., \max[a] \},
		$$
		Then the collection of sets $P = \big\{ p_{1}, ..., p_{m[a]} \big\}$ is a partition of $\{ 1, ..., n\}$. This creates a mapping $\mathfrak{m}: A_{n} \to \scP\big( \{1, ..., n\} \big)$. This mapping is injective: For a contradiction suppose that it isn't and let $a, b \in A_{n}$ such that $\mathfrak{m}[a]= \mathfrak{m}[b]$. Then we have that
		$$
		\big\{ a^{-1}[k] : k = 1, ..., m[a] \big\} = \big\{ b^{-1}[k] : k=1, ..., m[b] \big\}  
		$$
		and two sets that are equal contain the same elements so that $m[a] = m[b]$. Next we have that $\exists k_1 \in 1, ..., m[b]$ such that $a^{-1}[1] = b^{-1}[k_1]$. We know that $a_1 = 1$ so that $1\in a^{-1}[1]$ and hence $1\in b^{-1}[k_1]$. On the other hand, we also have that $b_1 = 1$ so that $k_1 = 1$ and $a^{-1}[1] = b^{-1}[1]$. 
		
		Now, suppose for an inductive hypothesis that $a^{-1}[j] = b^{-1}[j]$ for $j=1, ..., k-1<m[a]$ and let us consider the sets $a^{-1}[k]$. Let $i = \inf a^{-1}[k]$ so that $i\in \{1, ..., n\}$ is the least value such that $a_i = k$. We have that for each $\tilde{i} \in \{1, ..., i-1\}$, $a_{\tilde{i}} < k$ and by the inductive hypothesis $a_{\tilde{i}} = b_{\tilde{i}}$. By the equality of the two partitions, we know that $\exists \tilde{k}\in \{1, ..., m[b]\}$ such that $a^{-1}[k] = b^{-1}[\tilde{k}]$. Hence $b_i = \tilde{k}$. By assumption, $b\in A_n$ so that $b_i \in \{ 1, ..., k-1, k\}$ and if $b_i \in \{1, ..., k-1\}$ then we would get a contradiction that $\big\{ b^{-1}[j]: j=1, ..., m[b]\big\}$ is a partition so $\tilde{k} = k$ and we conclude that $a^{-1}[k] = b^{-1}[k]$. Hence, for all $j=1, ..., m[a]$, we have that 
		$$
		a^{-1}[j] = b^{-1}[j]
		$$
		so that $a = b$ and we conclude that $\mathfrak{m}$ is injective. 
		
		It thus remains to prove that $\mathfrak{m}$ is surjective onto $\scP\big( \{1,...,n\} \big)$. Let $P \in \scP\big( \{1,...,n\} \big)$ so that $|P| \leq n$. Then there exists $p_1 \in P$ such that $1 \in p_1$. Thus $P \backslash \{p_1\} \in \scP\big( \{1, ..., n\} \backslash p_1 \big)$ and the set $\{1, ... n\}\backslash p_1$ is ordered. In turn, there exists $x\in \{1, ..., n\} \backslash p_1$ such that $x = \min \{1, ..., n\} \backslash p_1$, which allows us to call $p_2$ the unique element of $P$ such that  $x\in p_{2}$. Continuing in this fashion, we obtain an enumeration of $P$ in the form $P= \{p_1, p_2, ...,p_{m} \}$, with $m \leq n$. 
		
		Define the sequence $(a_i)_{i=1, ..., n}$ by $a_i = j$ if and only if $i \in p_j$. We verify that $a \in A_n$. Firstly, $1\in p_1$ so that $a_1 =1$. Next, it is clear that $2 = \inf\{1, ..., n\}\backslash\{1\}$ so either
		$$
		2 =  \inf\{1, ..., n\}\backslash p_1 \quad\mbox{or}\quad 2\in p_1
		$$
		in which case $a_2 \in \{1, 2\}$. 
		
		Next suppose for $k <n$ that $\tilde{a} = (a_i)_{i=1, ..., k} \in A_{k}$. Then $1, ..., k \in \cup_{j=1}^{m[\tilde{a}]} p_{j}$ and either
		$$
		k+1 \in \bigcup_{j=1}^{m[\tilde{a}]} p_j \quad \mbox{or} \quad k+1 \in \big\{ 1, ..., n \big\} \backslash \Big( \bigcup_{j=1}^{m[a]} p_j \Big)
		$$
		In the former case, we have that $a_{k+1} \in \{1, ..., m[\tilde{a}] \}$ while in the latter we have that
		$$
		k+1 = \min \big\{ 1, ..., n\big\} \backslash \Big( \bigcup_{j=1}^{m[\tilde{a}]} p_j \Big)
		$$
		so that $k+1 \in p_{m[\tilde{a}]+1}$ and $a_{k+1} = m[\tilde{a}]+1$. Hence $(a_i)_{i=1, ..., k+1} \in A_{k+1}$. We conclude by induction. 
	\end{proof}

	The set of partitions $\scP\big( \{1, ..., n\}\big)$ is a partially ordered set with the non-strict partial order $\subseteq$ (where $P \subseteq Q$ indicates 
	\begin{equation*}
		\forall p \in P \quad \exists q \in Q \quad \mbox{such that} \quad p \subseteq q
	\end{equation*}
	that either $P=Q$ or the partition $P$ is finer than $Q$) and $\subset$ (where $P\subset Q$ indicates that the partition $P$ is finer than $Q$). We use the isomorphism between $\scP\big(\{1, ..., n\}\big)$ and $A_n$ to define a partial ordering on the set $A_n$:
	\begin{definition}
		Let $n\in \bN$. For any $a, a'\in A_n$, we say that $a\subseteq a'$ if and only if
		\begin{equation*}
			\forall j=1, ..., m[a] \quad \exists j' \in \{1, ..., m[a']\} \quad \mbox{such that} \quad a^{-1}[j] \subseteq (a')^{-1}[j']. 
		\end{equation*}
		Further, we say that $a\subset a'$ if and only if
		\begin{align*}
			&\forall j=1, ..., m[a] \quad \exists j' \in \{1, ..., m[a']\} \quad \mbox{such that} \quad a^{-1}[j] \subseteq (a')^{-1}[j'] \quad \mbox{and}
			\\
			& a\neq a'. 
		\end{align*}
		Thus $(A_n, \subseteq)$ is a poset. 
		
		Let $\boldsymbol{b}=(b_i)_{i=1, ..., n}$ and let $a \subseteq \llbracket \boldsymbol{b} \rrbracket$. We denote
		\begin{equation}
			\label{eq:K-sequence'}
			\boldsymbol{b}\circ(a) = \Big( b_{a^{-1}[j]} \Big)_{j=1, ..., m[a]}. 
		\end{equation}
	\end{definition}
	That is to say, $\boldsymbol{b}\circ (\llbracket \boldsymbol{b} \rrbracket)$ is the sequence of labels from the sequence $\boldsymbol{b}$ that are associated to each of the partition elements of the partition sequence $\llbracket \boldsymbol{b} \rrbracket$. 
	
	Further, for any $a \subset \llbracket \boldsymbol{b} \rrbracket$ the sequence $\boldsymbol{b}\circ(a)$ is a longer sequence all of whose values are contained within $\boldsymbol{b}\circ(\llbracket \boldsymbol{b} \rrbracket)$, that pairs each of the partition elements of the sequence $a$ with the label from $\boldsymbol{b}$ that is associated to the partition element of $\llbracket \boldsymbol{b} \rrbracket$ within which it is contained. 
	
	\begin{example}
		Consider the sequence $\boldsymbol{b}= (i, i, \iota)$. Then $\llbracket \boldsymbol{b} \rrbracket = (1,1,2)$ and
		\begin{equation*}
			(1,1,2)\subseteq (1,1,2) \quad\mbox{and}\quad (1,2,3) \subseteq (1,1,2). 
		\end{equation*} 
		Following on from Equation \eqref{eq:K-sequence'}, we have that
		\begin{equation*}
			\boldsymbol{b}\circ (1,1,2) = (i, \iota) \quad \mbox{and}\quad \boldsymbol{b}\circ (1,2,3) = (i, i, \iota). 
		\end{equation*}
	\end{example}
	
	\subsection{Lions-Taylor expansion}
	\label{subsection:Lions-TaylorEx}
	
	To be consistent with our discussion in the previous subsection, we start with the following reminder, taken for instance from \cites{buckdahn2017mean, chassagneux2014classical, CarmonaDelarue2017book1}:
	
	\begin{definition}
		\label{definition:Twice-Different}
		We say that a function $f:\cP_2(\bR^e) \to \bR^d$ is in $C_b^{(2)}\big( \cP_2(\bR^e); \bR^d \big)$ if 
		\begin{itemize}
			\item $f$ is continuously Lions-differentiable with Lions derivative $\partial_\mu f:\cP_2(\bR^e) \times \bR^e \to \lin( \bR^e, \bR^d)$ . 
			\item For every $\mu \in \cP_2(\bR^e)$, the $\mu$-measurable function $\partial_\mu f(\mu, \cdot)$ is differentiable with bounded and Lipschitz derivative $\nabla_x \partial_\mu f$ (with the Lipschitz property with respect to $\mu$ being for $\bW^{(1)}$) that satisfies
			$$
			\nabla_x \partial_\mu f: \cP_2(\bR^e) \times \bR^e \to \lin\big( (\bR^e)^{\otimes 2}, \bR^d\big). 
			$$ 
			\item For every $x\in \bR^e$, the function $\partial_\mu f(\cdot, x)$ has Lions derivative 
			$$
			\partial_\mu \partial_\mu f: \cP_2(\bR^e) \times \bR^e \times \bR^e \to \lin\big( (\bR^e)^{\otimes 2} , \bR^d\big)
			$$
			which is bounded and Lipschitz (with the Lipschitz property with respect to $\mu$ being for $\bW^{(1)}$). 
		\end{itemize}
	\end{definition}
	When there is no ambiguity, we will often drop the output space and write $C_b^{(2)}\big( \cP_2(\bR^e) \big)$. Before we start to generalise the above definition, we feel useful to make the following remarks:
	\begin{remark}
		\label{remark:WassersteinRemark1}
		As pointed out in Definition \ref{definition:Twice-Different}, the Lipschitz property with respect to the measure argument is understood as being for the aforementioned $\bW^{(1)}$-distance. The Lipschitz properties on the product spaces $\cP_{2}(\bR^e) \times \bR^e$ and $\cP_{2}(\bR^e) \times \bR^e \times \bR^e$ are understood for the corresponding product distances, $\bR^e$ being equipped with the Euclidean norm. Our choice to impose Lipschitz continuity with respect to the $\bW^{(1)}$-distance, which is obviously coarser than $\bW^{(2)}$, is explained in Remark \ref{remark:WassersteinRemark3} below. 
	\end{remark}
	
	\begin{remark}
		\label{remark:WassersteinRemark2}
		The requirement to have joint continuity with respect to all the arguments is in fact a strong requirement, which is known to be sub-optimal in practical applications. Indeed, Lions' derivative $\partial_{\mu} f(\mu,x)$ is typically `well-defined' at elements $x \in \bR^e$ that belong to the support of $\mu$. Put differently, the definition of the derivative outside the support of $\mu$ is somewhat arbitrary in the sense that any choice outside the support leads to a convenient  derivative. However, things become much more rigid when global continuity is imposed, as is the case here. In this case, the values of $\partial_{\mu} f(\mu,x)$ for $x$ outside the support are necessarily prescribed since we can always write $\partial_{\mu} f(\mu,x)=\lim_{n \rightarrow \infty} \partial_{\mu} f(\mu_{n},x)$, where $(\mu_{n})_{n \geq 1}$ is a sequence of fully supported probability measures that converges in $\cP_2(\bR^e)$ towards $\mu$. 
		
		Thus, there is a slight loss of generality in our definition. Actually, the same restriction is imposed in \cites{buckdahn2017mean, CarmonaDelarue2017book2, 2019arXiv180205882.2B}. Handling the general case leads to many technicalities, even when the derivatives that are studied are of order 2, see \cite{chassagneux2014classical} together with \cite{CarmonaDelarue2017book1}. 
	\end{remark}
	
	\begin{remark}
		\label{remark:WassersteinRemark3}
		The boundedness requirements on $\partial_{\mu} f$ and $\partial_{\mu} \partial_{\mu} f$ are also more demanding than what the general theory could allow. Typically, the Lions derivative of a function that is continuously differentiable and Lipschitz continuous is bounded in $L^2$, i.e., $\sup_{\mu \in \cP_{2}(\bR^e)} \int_{\bR^e} |x|^2 d\mu(x) < \infty$, and not globally in $L^\infty$, as we require here. 
		
		In fact, it is pretty easy to see that requiring the derivative to be globally bounded imposes the function $f$ to be globally Lipschitz for the $\bW^{(1)}$-distance, which is obviously stronger. Once again, similar restrictions are imposed in \cites{buckdahn2017mean, CarmonaDelarue2017book2, 2019arXiv180205882.2B}, and handling the general case leads to cumbersome technicalities. 
		
		In the end, this explains why in Remark \ref{remark:WassersteinRemark1} we decided to require Lipschitz continuity for $\bW^{(1)}$. 
	\end{remark}
	
	In particular, a function $f\in C^{(2)}_b\big( \cP_2(\bR^e) \big)$ satisfies
	$$
	\partial_\mu f(\mu, \cdot)\in C_b^1\Big(\bR^e; \lin\big( \bR^e, \bR^d\big) \Big), 
	$$
	that is; it is both bounded, $\mu$-measurable and differentiable.  
	
	\subsubsection*{Higher-order Lions differentiability}
	
	We now have all the ingredients needed to define the symbols associated with higher-order Lions' derivatives. Indeed, for $n\in \bN$ and $a\in A_n$, we define $\partial_a$ inductively by
	\begin{align*}
		\partial_{(1)} =& \partial_\mu, 
		\\
		\partial_{(a_1, ..., a_{k-1}, a_k)} =&
		\begin{cases} 
			\nabla_{x_{a_k}} \cdot \partial_{(a_1, ..., a_{k-1})} & \quad a_k \leq \max \{a_1, ..., a_{k-1}\}, 
			\\
			\partial_\mu \cdot \partial_{(a_1, ..., a_{k-1})} & \quad a_k > \max \{a_1, ..., a_{k-1}\}. 
		\end{cases}
	\end{align*}
	
	\begin{definition}
		\label{def:general:Lions:derivative}
		We say that a function $f:\cP_2(\bR^e) \to \bR^d$ belongs to $C_b^{(n)}\big( \cP_2(\bR^e); \bR^d \big)$ if there exists a collection of functions $(\partial_{a} f)_{a \in A^{n}}$ such that:
		
		\begin{enumerate}
			\item For any $k \in \{0, 1,.., n\}$, for any $a \in A_{k}$
			\begin{equation}
				\label{eq:def:general:Lions:derivative}
				\begin{split}
					\partial_{a} f : \cP_{2}( \bR^e) \times ( \bR^e)^{\times m[a]} &\rightarrow
					\lin\Big( (\bR^e)^{\otimes k}, \bR^d \Big)
					\\
					\big( \mu, x_{1},...,x_{m[a]} \big) &\mapsto \partial_{a} f(\mu,x_{1}, ..., x_{m[a]}).
				\end{split}
			\end{equation}
			
			\item For any $k \in \{1,...,n\}$ and any $a \in A_{k}$,  the function 
			$\partial_{a} f$ is bounded and Lipschitz continuous on $\cP_{2}( \bR^e) \times (\bR^e)^{\times m[a]}$, the first space being equipped with the $1$-Wasserstein distance.
			
			\item For any $k \in \{1,...,n-1\}$ and any $a \in A_{k}$, the function 
			$\partial_{a} f$ is differentiable with respect to $(x_{1},...,x_{m[a]})$ and 
			$$
			\nabla_{x_{j}} \partial_{a} f = \partial_{(a_{1},\cdots,a_{k},j)} f.
			$$
			
			\item For any $k \in \{1,\cdots,n-1\}$ and any $a \in A_{k}$, the function 
			$\partial_{a} f$ is differentiable with respect to $\mu$ and
			$$
			\partial_{\mu} \partial_{a} f = \partial_{(a_{1},\cdots,a_{k},m[a] + 1)} f.
			$$
		\end{enumerate}
	\end{definition}
	
	As with Remark \ref{remark:WassersteinRemark3}, we directly require all the derivatives $\partial_{a} f$ to be Lipschitz continuous with respect to $\bW^{(1)}$. In fact, this only makes a difference for the derivatives $\partial_{a} f$, with $|a|=n$: These derivatives should just be required to be $\bW^{(2)}$-Lipschitz continuous if we wanted to fit the standard construction of the Lions' derivative. Whenever $|a| \leq n-1$, we have by assumption that $\partial_{\mu} \partial_{a} f$ is bounded which, by the same third item, implies that $\partial_{a} f$ is necessarily $\bW^{(1)}$-Lipschitz continuous.
	
	\begin{lemma}
		\label{Empirical-nDeriv}
		Let $n, N\in \bN$, let $x_1, ..., x_N \in \bR^e$ and $\boldsymbol{x} = (x_1, ..., x_N)$. Let $f: \cP_2(\bR^e) \to \bR^d$ and suppose that $f\in C_b^{(n)}\big( \cP_2(\bR^e); \bR^d \big)$.

		We define $\overline{f}: (\bR^e)^{\oplus N} \to \bR^d$ by
		\begin{equation}
			\label{eq:Empirical-nDeriv-defn}
			\overline{f}\big( \boldsymbol{x} \big) = \overline{f}\big( (x_1, ..., x_N) \big) = f\Big( \bar{\mu}_N \big[ \boldsymbol{x} \big] \Big), \quad \bar{\mu}_N\big[ \boldsymbol{x} \big] = \frac{1}{N} \sum_{j=1}^N \delta_{x_j}. 
		\end{equation}
		Following on from \eqref{eq:Empirical-Dist}, let $\boldsymbol{i}$ be a multi-index taking values in the set $\{1, ..., N \}$ such that $|\boldsymbol{i}|\leq n$. Then
		\begin{equation}
			\label{eq:Empirical-nDeriv}
			\nabla_{\boldsymbol{i}} \overline{f}\Big( \boldsymbol{x} \Big) = \sum_{\substack{a \in A_{|\boldsymbol{i}|} \\ a \subseteq \llbracket \boldsymbol{i} \rrbracket}} \tfrac{1}{N^{m[a]}} \cdot \partial_a f \Big( \bar{\mu}_N\big[ \boldsymbol{x} \big], \boldsymbol{x}_{\boldsymbol{i}\circ (a)} \Big) 
		\end{equation}
		where 
		\begin{equation*}
			\nabla_{\boldsymbol{i}} = \nabla_{x_{i_{|\boldsymbol{i}|}}} ... \nabla_{x_{i_1}}, 
		\end{equation*}
		and (recalling Equation \eqref{eq:K-sequence'})
		\begin{equation*}
			\boldsymbol{x}_{\boldsymbol{i}\circ (a)} = \big( x_{i_{a^{-1}[1]}}, ..., x_{i_{a^{-1}[m[a]]}} \big). 
		\end{equation*}
	\end{lemma}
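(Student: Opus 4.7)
The plan is to prove the identity by induction on $|\boldsymbol{i}|=k$, with the base case $k=1$ already furnished by Equation \eqref{eq:Empirical-1Deriv} (and the degenerate case $k=0$ being immediate, since $A_0=\{\emptyset\}$ and the single sum on the right reduces to $\overline{f}(\boldsymbol{x})=f(\bar\mu_N[\boldsymbol{x}])$). The inductive step will apply the chain rule to each summand of the induction hypothesis, and the bulk of the work will be matching the two types of contributions to the two ways of extending a partition sequence $a\in A_k$ to a partition sequence $a'\in A_{k+1}$.

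Assume the formula holds for all multi-indices of length $k\geq 1$, and fix $\boldsymbol{i}=(i_1,\dots,i_{k+1})$; write $\tilde{\boldsymbol{i}}=(i_1,\dots,i_k)$ so that $\nabla_{\boldsymbol{i}}=\nabla_{x_{i_{k+1}}}\nabla_{\tilde{\boldsymbol{i}}}$. Apply $\nabla_{x_{i_{k+1}}}$ termwise to
\begin{equation*}
\nabla_{\tilde{\boldsymbol{i}}}\overline{f}(\boldsymbol{x})=\sum_{\substack{a\in A_k\\ a\subseteq\llbracket\tilde{\boldsymbol{i}}\rrbracket}} \tfrac{1}{N^{m[a]}}\,\partial_a f\bigl(\bar\mu_N[\boldsymbol{x}],\boldsymbol{x}_{\tilde{\boldsymbol{i}}\circ(a)}\bigr).
\end{equation*}
For each summand, the chain rule produces two kinds of contribution. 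First, differentiating the measure argument uses the baseline identity $\nabla_{x_l}g(\bar\mu_N[\boldsymbol{x}])=\tfrac{1}{N}\partial_\mu g(\bar\mu_N[\boldsymbol{x}],x_l)$ and the relation $\partial_\mu\partial_a f=\partial_{(a_1,\dots,a_k,m[a]+1)}f$ from Definition \ref{def:general:Lions:derivative}, producing the term $\tfrac{1}{N^{m[a]+1}}\partial_{(a,m[a]+1)}f$ evaluated with the new free variable $x_{i_{k+1}}$ appended. Second, differentiating each free variable $x_{\tilde{\boldsymbol{i}}\circ(a)_j}=x_{i_{a^{-1}[j]_\star}}$ (well-defined since $a\subseteq\llbracket\tilde{\boldsymbol{i}}\rrbracket$ forces all $i_l$ with $l\in a^{-1}[j]$ to coincide with a common value $\tilde{\boldsymbol{i}}\circ(a)_j$) contributes only when that free variable equals $x_{i_{k+1}}$, i.e.\ precisely when $\tilde{\boldsymbol{i}}\circ(a)_j=i_{k+1}$. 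Using $\nabla_{x_j}\partial_a f=\partial_{(a_1,\dots,a_k,j)}f$, this yields $\tfrac{1}{N^{m[a]}}\partial_{(a,j)}f$ with the same free variables.

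Now I match these contributions to the enumeration of $A_{k+1}$-partition sequences $a'$ refining $\llbracket\boldsymbol{i}\rrbracket$. Any $a'\in A_{k+1}$ with $a'\subseteq\llbracket\boldsymbol{i}\rrbracket$ restricts by truncation to some $a\in A_k$ with $a\subseteq\llbracket\tilde{\boldsymbol{i}}\rrbracket$, and conversely the admissible extensions $a'=(a,a'_{k+1})$ are of exactly two types: (ii) $a'_{k+1}=m[a]+1$, always allowed, which matches the first contribution above (with $m[a']=m[a]+1$ accounting for the extra $1/N$); and (i) $a'_{k+1}=j\in\{1,\dots,m[a]\}$, allowed precisely when $(a')^{-1}[j]=a^{-1}[j]\cup\{k+1\}$ fits inside a partition element of $\llbracket\boldsymbol{i}\rrbracket$, which by the structure of $\llbracket\boldsymbol{i}\rrbracket$ as the level-set partition of $\boldsymbol{i}$ is equivalent to $i_{k+1}=\tilde{\boldsymbol{i}}\circ(a)_j$; this matches the second contribution (with $m[a']=m[a]$, hence no extra $1/N$). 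In each case the free-variable tuple $\boldsymbol{x}_{\boldsymbol{i}\circ(a')}$ agrees with the tuple produced by the chain rule, either by appending $x_{i_{k+1}}$ as a new slot or by keeping the existing slot untouched. Summing, one recovers exactly the right-hand side of \eqref{eq:Empirical-nDeriv} for the extended multi-index, completing the induction.

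The mechanical steps (chain rule, count of $1/N$ factors) are routine given the inductive definitions of $\partial_a$; the main obstacle is bookkeeping, specifically verifying that the two-case split of how $\nabla_{x_{i_{k+1}}}$ hits a summand is in one-to-one correspondence with the two mechanisms by which a partition sequence $a\in A_k$ with $a\subseteq\llbracket\tilde{\boldsymbol{i}}\rrbracket$ extends to an $a'\in A_{k+1}$ with $a'\subseteq\llbracket\boldsymbol{i}\rrbracket$. This is where Lemma \ref{Lemma:Bijection-Partition-simple} is implicitly used: viewing partition sequences as partitions of ordered sets makes transparent that the compatibility condition $\tilde{\boldsymbol{i}}\circ(a)_j=i_{k+1}$ for reusing the $j$-th free variable is exactly the condition for $k+1$ to belong to the same block of $\llbracket\boldsymbol{i}\rrbracket$ as the block indexed by $j$.
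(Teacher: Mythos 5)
Your proposal is correct and follows essentially the same route as the paper: induction on the length of the multi-index, applying the chain rule to each summand so that the measure derivative yields the $\tfrac{1}{N}\partial_{(a\cdot(m[a]+1))}f$ contribution and differentiation of an existing free variable yields the $\partial_{(a\cdot j)}f$ contribution under the constraint $i_{k+1}=i_{a^{-1}[j]}$, with these two mechanisms matched to the two ways of extending $a\in A_k$ to $a'\in A_{k+1}$ refining $\llbracket\boldsymbol{i}\rrbracket$. Your write-up in fact makes the final bookkeeping (the bijective matching of chain-rule terms to extensions of partition sequences) more explicit than the paper, which simply asserts the resummation.
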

	In words, the summation in Equation \eqref{eq:Empirical-nDeriv} runs over all partition sequences that are finer than the partition sequence $\llbracket \boldsymbol{i} \rrbracket$. 
	
	\begin{proof}
		We have established that Equation \eqref{eq:Empirical-nDeriv} holds in the case $|\boldsymbol{i}| = 1$ with Equation \eqref{eq:Empirical-1Deriv} and $|\boldsymbol{i}| = 2$ with Equation \eqref{eq:Empirical-2Deriv}. 
		
		We proceed via induction on $|\boldsymbol{i}|$. First assume that for $\boldsymbol{i}$ taking values in $\{1, ..., N\}$ such that $|\boldsymbol{i}| = n-1$, we have that
		\begin{equation*}
			\nabla_{\boldsymbol{i}} \overline{f} \Big( \boldsymbol{x} \Big) = \sum_{\substack{a \in A_{|\boldsymbol{i}|} \\ a\subseteq \llbracket \boldsymbol{i} \rrbracket}} \tfrac{1}{N^{m[a]}} \partial_a f \Big( \bar{\mu}_N\big[ \boldsymbol{x} \big], \boldsymbol{x}_{\boldsymbol{i}\circ (a)} \Big)
		\end{equation*}
		and $i_{n} \in \{1, ..., N\}$. Then for any $a\in A_{n-1}$ such that $a\subseteq \llbracket \boldsymbol{i} \rrbracket$, 
		\begin{align*}
			\nabla_{i_{n}} &\big(\partial_a f\big) \Big( \bar{\mu}_{N}\big[ \boldsymbol{x} \big], \boldsymbol{x}_{\boldsymbol{i}\circ (a)} \Big)
			\\
			&=
			\tfrac{1}{N} \cdot \partial_\mu \partial_a f\Big( \bar{\mu}_{N}\big[ \boldsymbol{x} \big], \boldsymbol{x}_{\boldsymbol{i}\circ (a)}, x_{i_{n}} \Big)  
			+ 
			\sum_{j=1}^{m[a]} \partial_{(a\cdot j)} f\Big( \bar{\mu}_{N}\big[ \boldsymbol{x} \big], \boldsymbol{x}_{\boldsymbol{i}\circ (a)} \Big) \cdot \delta_{\{ i_n = i_{a^{-1}[j]} \} }.  
		\end{align*}
		Hence
		\begin{align*}
			\nabla_{i_n} \nabla_{\boldsymbol{i}} &\overline{f}\Big( \boldsymbol{x} \Big) 
			= 
			\sum_{\substack{a\in A_{|\boldsymbol{i}|} \\ a \subseteq \llbracket \boldsymbol{i} \rrbracket}} \tfrac{1}{N^{m[a]}} \cdot \nabla_{i_n} \big(\partial_a f\big) \Big( \bar{\mu}_N\big[ \boldsymbol{x} \big], \boldsymbol{x}_{\boldsymbol{i}\circ (a)} \Big) 
			\\
			=& \sum_{\substack{a\in A_{|\boldsymbol{i}|} \\ a \subseteq \llbracket \boldsymbol{i} \rrbracket}} \tfrac{1}{N^{m[a]}} \bigg( \tfrac{1}{N} \cdot \partial_\mu \partial_a f\Big( \bar{\mu}_N\big[ \boldsymbol{x} \big], \boldsymbol{x}_{\boldsymbol{i}\circ (a)} \Big) + \sum_{j=1}^{m[a]} \partial_{(a\cdot j)} f\Big( \bar{\mu}_N\big[ \boldsymbol{x} \big], \boldsymbol{x}_{\boldsymbol{i}\circ (a)} \Big) \cdot \delta_{\{i_n = i_{a^{-1}[j]}\}} \bigg) 
			\\
			=&\sum_{\substack{a\in A_{|(\boldsymbol{i}, i_n)|} \\ a\subseteq \llbracket (\boldsymbol{i}, i_n)\rrbracket}} \tfrac{1}{N^{m[a]}} \cdot \partial_a f\Big( \bar{\mu}_N\big[ \boldsymbol{x} \big], \boldsymbol{x}_{(\boldsymbol{i}, i_n)\circ (a)} \Big) 
		\end{align*}
		which implies the inductive hypothesis. 
	\end{proof}
	
	On the road to a general Lions-Taylor expansion, we recall the first-order expansion, which underpins the very definition of the Lions derivative. For any two $\mu$ and $\nu$ in $\cP_{2}(\bR^e)$, let $\Pi^{\mu, \nu}$ be a measure on $(\bR^e)^{\oplus 2}$ with marginal distributions $\mu$ and $\nu$. Then, for a continuously Lions differentiable function $f : \cP_{2}(\bR^e) \rightarrow \bR^d$,
	\begin{equation}
		\label{eq:n=1:ito:lions}
		f(\nu) - f(\mu) = \int_{(\bR^e)^{\oplus 2}} \partial_{\mu} f(\mu,x) \cdot (y-x) d \Pi^{\mu,\nu}(x,y) 
		+ 
		o \biggl[ \biggl( \int_{(\bR^e)^{\oplus 2}} |y-x|^2 d\Pi^{\mu,\nu}(x,y) \biggr)^{1/2} \biggr]. 
	\end{equation}
	In fact, the remainder can be explicitly written out:
	\begin{align}
		\nonumber
		o \biggl[& \biggl( \int_{(\bR^e)^{\oplus 2}} |y-x|^2 d\Pi^{\mu,\nu}(x,y) \biggr)^{1/2} \biggr]
		\\
		\label{eq:n=1:expression:remainder}
		&=  
		\int_{0}^1 \bigg( \partial_{\mu} f \Big( \Pi_{\xi}^{\mu,\nu}, x + \xi (y-x) \Big) - \partial_{\mu} f \Big( \mu, x \Big) \bigg) \cdot (y-x) d \Pi^{\mu,\nu}(x,y),
	\end{align}
	with the notation
	\begin{equation}
		\label{eq:Pixi}
		[0,1] \ni \xi \mapsto \Pi_\xi^{\mu, \nu} = \Pi^{\mu, \nu} \circ \Big( x + \xi(y-x)\Big)^{-1} \in \cP_2(\bR^e). 
	\end{equation}
	In particular, when $f$ is in $C^{(1)}(\cP_{2}(\bR^e))$ in the sense of 
	Definition \ref{def:general:Lions:derivative}, the Landau symbol in \eqref{eq:n=1:ito:lions} can be easily upper bounded by
	\begin{equation}
		\label{eq:n=1:bound:remainder}
		\begin{split}
			o \Bigg( \biggl( \int_{(\bR^e)^{\oplus 2}} |y-x|^2 d\Pi^{\mu,\nu}(x,y) \biggr)^{1/2} \Bigg)
			&\leq O \Bigl( \bW^{(2)}(\mu,\nu)^2 \Bigr).
		\end{split}
	\end{equation}
	Finally, observe that there is no other constraint on the probability measure $\Pi^{\mu,\nu}$ than it being a \textit{coupling} of $\mu$ and $\nu$. There is no need to require any optimality (say for instance in the sense of Equation \eqref{eq:WassersteinDistance}) in the choice of the coupling. In fact, the possible accuracy of the coupling (for the $L^2$ norm) reads not only in the first term in the right-hand side but also in the second term.
	
	In order to generalise \eqref{eq:n=1:ito:lions}, we define, for $a\in A_n$,
	the corresponding differential operator, which is acting on elements $f \in C_b^{(n)}\big( \cP_2(\bR^e); \bR^d \big)$ in the following way:
	\begin{align}
		\nonumber
		\rD^a &f(\mu)[\Pi^{\mu, \nu}] 
		\\
		\label{eq:D^a:without:0}
		=& \underbrace{\int_{(\bR^e)^{\oplus 2}} ... \int_{(\bR^e)^{\oplus 2}} }_{\times m[a]} \partial_a f\Big( \mu, \boldsymbol{x}_{m\{a\}} \Big) \cdot \bigotimes_{i=1}^{|a|} ( y_{a_i} - x_{a_i}) \cdot d\big( \Pi^{\mu, \nu}\big)^{\times m[a]} \Big( (\boldsymbol{x}, \boldsymbol{y})_{m\{a\}}\Big) . 
	\end{align}
	
	Here, for compact notation we have denoted
	\begin{align*}
		&d\big( \Pi^{\mu, \nu}\big)^{\times m[a]} \Big( (\boldsymbol{x}, \boldsymbol{y})_{m\{a\}}\Big) = d\Pi^{\mu, \nu}(x_1, y_1) \times ... \times d\Pi^{\mu, \nu}(x_{m[a]}, y_{m[a]} ) \quad \mbox{and}
		\\
		& \boldsymbol{x}_{m\{a\}} = (x_1, ..., x_{m[a]}). 
	\end{align*}
	
	We also use the convention that
	$$
	\rD^{\emptyset} f(\mu)[\Pi^{\mu, \nu}]	= f(\mu)
	$$
	
	We introduce a norm on the collection of functions described in Definition \ref{def:general:Lions:derivative}:
	\begin{definition}
		\label{definition:FunctionNorm-non}
		For $a\in A_n$, we denote
		\begin{equation}
			\label{eq:definition:FunctionNorm1-non}
			\| \partial_a f\|_\infty:= \sup_{\mu \in \cP_2(\bR^e)}  \sup_{(x_1, ..., x_{m[a]}) \in (\bR^e)^{\times m[a]}} \big| \partial_a f(\mu, x_1, ..., x_{m[a]}) \big|. 
		\end{equation}
		
		Further, we denote
		\begin{equation}
			\label{eq:definition:FunctionNorm2-non}
			\left.\begin{aligned}
				\big\| \partial_a f \big\|_{\lip, \mu}
				&:=
				\sup_{\substack{ \mu, \nu\in \cP_1(\bR^e) \\ x_1, ..., x_{m[a]} \in \bR^e}} \frac{\big| \partial_a f(\mu, x_1, ..., x_{m[a]}) - \partial_a f(\nu, x_1, ..., x_{m[a]}) \big|}{\bW^{(1)}(\mu, \nu)}, 
				\\
				\big\| \partial_a f \big\|_{\lip, j}
				&:= 
				\sup_{\substack{x_1, ..., x_{m[a]} \in \bR^e \\ y_j \in \bR^e \\ \mu \in \cP_1(\bR^e)}} \frac{\big| \partial_a f(\mu, x_1, ..., x_j, ..., x_{m[a]}) - \partial_a f(\mu, x_1,..., y_j, ..., x_{m[a]}) \big|}{|x_j - y_j|}. 
			\end{aligned}\right\rbrace
		\end{equation}
		
		We define
		\begin{equation}
			\label{eq:definition:FunctionNorm-non}
			\big\| f \big\|_{C_b^{(n)}} := \sum_{ a\in A^{n}} \big\| \partial_a f \big\|_\infty
			+
			\sum_{a\in A_{n}} \bigg( \big\| \partial_a f \big\|_{\lip, \mu} + \sum_{j=1}^{m[a]} \big\| \partial_a f \big\|_{\lip, j} \bigg). 
		\end{equation}
	\end{definition}
	
	To motivate the use of the differential operator described by Equation \eqref{eq:D^a:without:0}, we provide the following Proposition which links the classical Taylor expansion with Lemma \ref{Empirical-nDeriv}. 
	\begin{proposition}
		\label{proposition:classicTay<=>LionsTay}
		Let $n, N\in \bN$. Let $f: \cP_2(\bR^e) \to \bR^d$ and suppose that $f\in C_b^{(n)}\big( \cP_2(\bR^e); \bR^d\big)$. Let $\overline{f}: (\bR^d)^{\oplus N} \to \bR^d$ be defined as in Equation \eqref{eq:Empirical-nDeriv-defn} and additionally for $\boldsymbol{x}, \boldsymbol{y}\in (\bR^e)^{\oplus N}$ we define $\Pi^{\boldsymbol{x}, \boldsymbol{y}} \in \cP_2\big( \bR^e \oplus \bR^e \big)$ by
		\begin{equation}
			\label{eq:proposition:classicTay<=>LionsTay}
			\Pi^{\boldsymbol{x}, \boldsymbol{y}} = \frac{1}{N} \sum_{j=1}^N \delta_{(x_j, y_j)}. 
		\end{equation}
		Then
		\begin{equation}
			\label{eq:proposition:classicTay<=>LionsTay1}
			\overline{f}\Big( \boldsymbol{y} \Big) = \sum_{a\in A^n} \frac{1}{|a|!} \cdot \rD^a f\Big(\mu_N\big[ \boldsymbol{x} \big] \Big)\Big[ \Pi^{\boldsymbol{x}, \boldsymbol{y}} \Big]  + R_n^{\boldsymbol{x}, \boldsymbol{y}}\big( \overline{f} \big)
		\end{equation}
		where
		\begin{align}
			\nonumber
			&R_n^{\boldsymbol{x}, \boldsymbol{y}}\big(\overline{f} \big) 
			\\
			\label{eq:proposition:classicTay<=>LionsTayRem}
			&= \tfrac{1}{(n-1)!} \sum_{a\in A_n} \underbrace{\int_{(\bR^e)^{\oplus 2}} ... \int_{(\bR^e)^{\oplus 2}}}_{\times m[a]} f^a[\Pi^{\boldsymbol{x}, \boldsymbol{y}}] \cdot \bigotimes_{i=1}^{|a|} ( \hat{y}_{a_i} - \hat{x}_{a_i} ) \cdot d\big( \Pi^{\boldsymbol{x}, \boldsymbol{y}} \big)^{\times m[a]} \Big( (\hat{\boldsymbol{x}}, \hat{\boldsymbol{y}})_{m\{a\}} \Big)
		\end{align}
		and
		\begin{align*}
			&f^a[\Pi^{\boldsymbol{x}, \boldsymbol{y}}]:= f^a[\Pi^{\boldsymbol{x}, \boldsymbol{y}}]\Big( (\hat{\boldsymbol{x}}, \hat{\boldsymbol{y}})_{m\{a\}} \Big)
			\\
			&=\int_0^1 \bigg( \partial_a f\Big( \bar{\mu}_N\big[ \boldsymbol{x}+\xi(\boldsymbol{y}-\boldsymbol{x}) \big], \big( \hat{\boldsymbol{x}} + \xi(\hat{\boldsymbol{y}} - \hat{\boldsymbol{x}} )_{m\{a\}} \big) \Big) - \partial_a f\Big( \bar{\mu}_N\big[ \boldsymbol{x} \big], \hat{\boldsymbol{x}}_{m\{a\}} \Big) \bigg) \cdot (1-\xi)^{|a|-1} d\xi. 
		\end{align*}
	
		Finally, when $\| f \|_{C_b^{(n)}}< \infty$ we have that
		\begin{equation}
			\label{eq:proposition:classicTay<=>LionsTayRem+}
			\big| R_n^{\boldsymbol{x}, \boldsymbol{y}}( \overline{f} ) \big| \leq \tfrac{1}{n!} \sum_{a \in A_n} \Big( \big\| \partial_a f \big\|_{\lip, \mu} + \sum_{j=1}^{m[a]} \big\| \partial_a f \big\|_{\lip, j} \Big) \cdot \prod_{j=1}^{m[a]} \bigg( \frac{1}{N} \sum_{i=1}^N \big| x_i - y_i\big|^{\times |a^{-1}[j]|} \bigg)
		\end{equation}
	\end{proposition}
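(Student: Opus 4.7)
The strategy is to reduce the claimed Lions-Taylor identity for $\overline{f}$ to the classical multivariate Taylor formula on the finite-dimensional space $(\bR^e)^{\oplus N}$, and then to rewrite every ordinary partial derivative of $\overline{f}$ in terms of Lions derivatives of $f$ via Lemma \ref{Empirical-nDeriv}. Because $f\in C_b^{(n)}(\cP_2(\bR^e);\bR^d)$, the composition $\overline{f}$ is $C^n$ in the ordinary Euclidean sense on $(\bR^e)^{\oplus N}$, so I begin with
\begin{equation*}
\overline{f}(\boldsymbol{y}) = \sum_{k=0}^{n} \tfrac{1}{k!} \sum_{\boldsymbol{i}\in\{1,\ldots,N\}^k} \nabla_{\boldsymbol{i}}\overline{f}(\boldsymbol{x}) \cdot \bigotimes_{l=1}^{k}(y_{i_l}-x_{i_l}) + R_n^{\boldsymbol{x},\boldsymbol{y}}(\overline{f}),
\end{equation*}
with the standard integral remainder
\begin{equation*}
R_n^{\boldsymbol{x},\boldsymbol{y}}(\overline{f}) = \tfrac{1}{(n-1)!}\sum_{|\boldsymbol{i}|=n}\int_0^1\bigl(\nabla_{\boldsymbol{i}}\overline{f}(\boldsymbol{x}+\xi(\boldsymbol{y}-\boldsymbol{x})) - \nabla_{\boldsymbol{i}}\overline{f}(\boldsymbol{x})\bigr)(1-\xi)^{n-1}d\xi \cdot \bigotimes_{l=1}^{n}(y_{i_l}-x_{i_l}).
\end{equation*}
Inserting Lemma \ref{Empirical-nDeriv} rewrites each $\nabla_{\boldsymbol{i}}\overline{f}(\boldsymbol{z})$ as a sum over partition sequences $a\subseteq\llbracket\boldsymbol{i}\rrbracket$ of $N^{-m[a]}\partial_a f(\bar{\mu}_N[\boldsymbol{z}],\boldsymbol{z}_{\boldsymbol{i}\circ(a)})$.

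The combinatorial core is to swap the order of summation, $\sum_{\boldsymbol{i}}\sum_{a\subseteq\llbracket\boldsymbol{i}\rrbracket} = \sum_a\sum_{\boldsymbol{i}:\,a\subseteq\llbracket\boldsymbol{i}\rrbracket}$, and to identify the inner sum with integration against the empirical coupling. For fixed $a\in A_k$, the condition $a\subseteq\llbracket\boldsymbol{i}\rrbracket$ says that $\boldsymbol{i}$ is constant on each block of the partition encoded by $a$, so admissible $\boldsymbol{i}$ are parametrised by tuples $(k_1,\ldots,k_{m[a]})\in\{1,\ldots,N\}^{m[a]}$ via $i_l = k_{a_l}$. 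Under this reparametrisation one has $\boldsymbol{x}_{\boldsymbol{i}\circ(a)}=(x_{k_1},\ldots,x_{k_{m[a]}})$ and $\bigotimes_{l=1}^{|a|}(y_{i_l}-x_{i_l})=\bigotimes_{l=1}^{|a|}(y_{k_{a_l}}-x_{k_{a_l}})$. Since $\Pi^{\boldsymbol{x},\boldsymbol{y}}=\tfrac{1}{N}\sum_j\delta_{(x_j,y_j)}$, integration of any function against $(\Pi^{\boldsymbol{x},\boldsymbol{y}})^{\times m[a]}$ is precisely $N^{-m[a]}$ times the sum over these tuples, and comparison with \eqref{eq:D^a:without:0} yields the key identity
\begin{equation*}
\sum_{\boldsymbol{i}:\,a\subseteq\llbracket\boldsymbol{i}\rrbracket}\tfrac{1}{N^{m[a]}}\partial_a f\bigl(\bar{\mu}_N[\boldsymbol{x}],\boldsymbol{x}_{\boldsymbol{i}\circ(a)}\bigr)\cdot\bigotimes_{l=1}^{|a|}(y_{i_l}-x_{i_l}) = \rD^a f(\bar{\mu}_N[\boldsymbol{x}])\bigl[\Pi^{\boldsymbol{x},\boldsymbol{y}}\bigr].
\end{equation*}
Summing over $a\in A_k$ and $k=0,\ldots,n$ produces the main sum in \eqref{eq:proposition:classicTay<=>LionsTay1}, and running the same reindexing inside the $\xi$-integral (with $\boldsymbol{x}$ replaced by $\boldsymbol{x}+\xi(\boldsymbol{y}-\boldsymbol{x})$ in both the measure argument and in the corresponding free variables) delivers \eqref{eq:proposition:classicTay<=>LionsTayRem}.

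For the bound \eqref{eq:proposition:classicTay<=>LionsTayRem+} I apply the Lipschitz seminorms of Definition \ref{definition:FunctionNorm-non} to the integrand of $f^a[\Pi^{\boldsymbol{x},\boldsymbol{y}}]$: the difference of the two values of $\partial_a f$ is bounded by $\|\partial_a f\|_{\lip,\mu}\bW^{(1)}(\bar{\mu}_N[\boldsymbol{x}+\xi(\boldsymbol{y}-\boldsymbol{x})],\bar{\mu}_N[\boldsymbol{x}]) + \sum_j\|\partial_a f\|_{\lip,j}|\hat y_j-\hat x_j|$, where the $\bW^{(1)}$ term is controlled by the obvious empirical coupling. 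The tensor integral $\int\bigotimes_{l=1}^{|a|}(\hat y_{a_l}-\hat x_{a_l})d(\Pi^{\boldsymbol{x},\boldsymbol{y}})^{\times m[a]}$ factorises blockwise into $\prod_{j=1}^{m[a]}\tfrac{1}{N}\sum_i(y_i-x_i)^{\otimes|a^{-1}[j]|}$, whose norm is bounded by $\prod_{j=1}^{m[a]}\tfrac{1}{N}\sum_i|y_i-x_i|^{|a^{-1}[j]|}$, and the prefactor $1/n!$ arises from $\tfrac{1}{(n-1)!}\int_0^1(1-\xi)^{n-1}d\xi = 1/n!$. The principal obstacle in this proof is the combinatorial step in the middle paragraph: one must simultaneously verify that the normalisation $N^{-m[a]}$ delivered by Lemma \ref{Empirical-nDeriv} is exactly absorbed by the $m[a]$-fold empirical-coupling integration, and that the tensor $\bigotimes_l(y_{i_l}-x_{i_l})$ reshapes correctly into the block-indexed form $\bigotimes_l(y_{k_{a_l}}-x_{k_{a_l}})$; once this identification is in place, both the remainder identity and the norm bound follow by repeating the same manipulations inside the $\xi$-integral together with a routine Fubini and Lipschitz estimate.
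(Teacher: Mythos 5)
Your proposal is correct and follows essentially the same route as the paper's proof: the classical Taylor expansion of $\overline{f}$ with integral remainder, the substitution of Lemma \ref{Empirical-nDeriv}, the reindexing of the sums over multi-indices as $m[a]$-fold integrals against the empirical coupling $\Pi^{\boldsymbol{x},\boldsymbol{y}}$ (which the paper states more tersely), and the same Lipschitz plus Beta-integral estimate for \eqref{eq:proposition:classicTay<=>LionsTayRem+}. Your explicit parametrisation of the multi-indices with $a\subseteq\llbracket\boldsymbol{i}\rrbracket$ by tuples $(k_1,\ldots,k_{m[a]})$ is a welcome elaboration of the step the paper compresses into ``replace the summations over indexes by integrals over the coupling.''
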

	
	For notational purposes, it is worth highlighting the difference between $x, y \in (\bR^e)^{\oplus N}$ the vector containing all points of the empirical measures $\bar{\mu}_N[x]$ $\bar{\mu}_N[y]$ and separately $\hat{x}, \hat{y} \in \bR^e$ the free variables over which the measures are integrated.

	\begin{proof}
		Firstly, for any $\boldsymbol{x}, \boldsymbol{y}\in (\bR^e)^{\oplus N}$ note that Equation \eqref{eq:proposition:classicTay<=>LionsTay} defines a coupling between the two measures
		$$
		\bar{\mu}_N\big[ \boldsymbol{x} \big] = \frac{1}{N} \sum_{j=1}^N \delta_{x_j} 
		\quad \mbox{and} \quad 
		\bar{\mu}_N\big[ \boldsymbol{y} \big] = \frac{1}{N} \sum_{j=1}^N \delta_{y_j}. 
		$$
		Thanks to the well-known classical Taylor expansion, we have that
		\begin{equation}
			\label{eq:classic-Taylor}
			\overline{f}\Big( \boldsymbol{y} \Big) 
			= 
			\sum_{\substack{\boldsymbol{i} \\ |\boldsymbol{i}| = 0}}^n \frac{1}{|\boldsymbol{i}|!} \cdot \nabla_{\boldsymbol{i}} \overline{f} \big( \boldsymbol{x} \big) \cdot \bigotimes_{j=1}^{|\boldsymbol{i}|} \big( y_{i_j} - x_{i_j} \big) + R_n^{\boldsymbol{x}, \boldsymbol{y}}\big( \overline{f} \big) 
		\end{equation}
		where
		\begin{equation}
			\label{eq:proposition:classicTay<=>LionsTayRem*}
			R_n^{\boldsymbol{x}, \boldsymbol{y}}\big( \overline{f} \big) = \sum_{\substack{\boldsymbol{i}: \\ |\boldsymbol{i}| = n}} \int_0^1 \Big( \nabla_{\boldsymbol{i}} \overline{f}\big( \boldsymbol{x} + \xi(\boldsymbol{y}-\boldsymbol{x}) \big)  - \nabla_{\boldsymbol{i}} \overline{f}\big( \boldsymbol{x} \big) \Big) \cdot \bigotimes_{j=1}^n \big( y_{i_j} - x_{i_j} \big) \cdot (1-\xi)^{n-1} d\xi
		\end{equation}
		By applying Lemma \ref{Empirical-nDeriv}, we get that this is equivalent to	
		\begin{equation*}
			\overline{f}\Big( \boldsymbol{y} \Big) = \sum_{a\in A^n} \frac{1}{|a|!} \cdot \frac{1}{N^{m[a]}} \sum_{i_1, ..., i_{m[a]} = 1}^{N} \partial_a f\Big( \bar{\mu}_N\big[ \boldsymbol{x} \big], (\boldsymbol{x}_i)_{m\{a\}} \Big) \cdot \bigotimes_{j=1}^{|a|} ( y_i -  x_i)_{a_j} + R_n^{\boldsymbol{x}, \boldsymbol{y}}\big( \overline{f} \big) 
		\end{equation*}
		where we denoted
		\begin{equation*}
			(\boldsymbol{x}_i)_{m\{a\}} = \big( x_{i_1},..., x_{i_{m[a]}} \big) 
			\quad \mbox{and} \quad 
			(y_i - x_i)_{a_j} = \big( y_{i_{a_j}} - x_{i_{a_j}} \big). 
		\end{equation*}
		Finally, we replace the summations over indexes by integrals over the coupling $\Pi^{\boldsymbol{x}, \boldsymbol{y}}$ between empirical distributions and obtain \eqref{eq:proposition:classicTay<=>LionsTay1}. 
		
		A similar argument also implies that Equation \eqref{eq:proposition:classicTay<=>LionsTayRem*} and Equation \eqref{eq:proposition:classicTay<=>LionsTayRem} are identical. Finally, Equation \eqref{eq:proposition:classicTay<=>LionsTayRem+} follows from the identities
		\begin{align*}
			&\underbrace{\int_{(\bR^e)^{\oplus 2}} ... \int_{(\bR^e)^{\oplus 2}}}_{\times m[a]} \bigotimes_{i=1}^{|a|} ( \hat{y}_{a_i} - \hat{x}_{a_i} ) \cdot d\big( \Pi^{\boldsymbol{x}, \boldsymbol{y}} \big)^{\times m[a]} \Big( (\hat{\boldsymbol{x}}, \hat{\boldsymbol{y}})_{m\{a\}} \Big) = \prod_{j=1}^{m[a]} \bigg( \frac{1}{N} \sum_{i=1}^N \big| x_i - y_i \big|^{\times |a^{-1}[j]|} \bigg), 
			\\
			&\bigg( \partial_a f\Big( \bar{\mu}_N\big[ \boldsymbol{x}+\xi(\boldsymbol{y}-\boldsymbol{x}) \big], \big( \hat{\boldsymbol{x}} + \xi(\hat{\boldsymbol{y}} - \hat{\boldsymbol{x}} )_{m\{a\}} \big) \Big) - \partial_a f\Big( \bar{\mu}_N\big[ \boldsymbol{x} \big], \hat{\boldsymbol{x}}_{m\{a\}} \Big) \bigg)
			\\
			&\leq \Big( \big\| \partial_a f \big\|_{\lip, \mu} + \sum_{j=1}^{m[a]} \big\| \partial_a f \big\|_{\lip, j} \Big) 
			\quad \mbox{and}\quad
			\tfrac{1}{(n-1)!} \int_0^1 (1-\xi)^{n-1} d\xi = \tfrac{1}{n!}. 
		\end{align*}
	\end{proof}
	
	Next, we want to establish a Taylor expansions for a function $f:\cP_2(\bR^e) \to \bR^d$ for a general choice of $\mu \in \cP_2(\bR^d)$. Inspired by Equation \eqref{eq:proposition:classicTay<=>LionsTay1}, we obtain the following:
	\begin{theorem}[Lions-Taylor Theorem]
		\label{theorem:LionsTaylor1}
		Let $n \in \bN$ and let $f\in C_b^{(n)} \big( \cP_2(\bR^e); \bR^d \big)$. Then for any $\mu, \nu \in \cP_{n+1} (\bR^e)$ with joint distribution $\Pi^{\mu, \nu}$, we have that
		\begin{align}
			\label{eq:TaylorExpansion}
			f(\nu) &= \sum_{a\in A^n} \frac{1}{|a|!} \cdot \rD^a f(\mu)[\Pi^{\mu, \nu}]  + R_n^{\Pi^{\mu, \nu}}(f), 
		\end{align}
		where
		\begin{equation*}
			R_n^{\Pi^{\mu, \nu}}(f) 
			= \tfrac{1}{(n-1)!} \sum_{a\in A_{n}} \underbrace{\int_{(\bR^e)^{\oplus 2}} ... \int_{(\bR^e)^{\oplus 2}} }_{\times m[a]} f^{a}[\Pi^{\mu, \nu}] \cdot \bigotimes_{i=1}^{|a|} ( y_{a_i} - x_{a_i}) \cdot d\big( \Pi^{\mu, \nu}\big)^{\times m[a]} \Big( (\boldsymbol{x}, \boldsymbol{y})_{m\{a\}}\Big), 
		\end{equation*}
		and
		\begin{align*}
			f^{a}[\Pi^{\mu, \nu}] = \int_0^1 \bigg(& \partial_a f\Big( \Pi^{\mu, \nu}_\xi, \big(\boldsymbol{x} + \xi(\boldsymbol{y} - \boldsymbol{x} )\big)_{m\{a\}} \Big) 
			- \partial_a f\Big( \mu, \boldsymbol{x}_{m\{a\}} \Big) \bigg) (1-\xi)^{|a|-1} d\xi. 
		\end{align*}
		Here we denoted
		\begin{align*}
			\big( \boldsymbol{x} + \xi(\boldsymbol{y}-\boldsymbol{x}) \big)_{m\{a\}} 
			= 
			\Big( \boldsymbol{x}_1 + \xi(\boldsymbol{y}_1 - \boldsymbol{x}_1), ..., \boldsymbol{x}_{m[a]} + \xi(\boldsymbol{y}_{m[a]} - \boldsymbol{x}_{m[a]} ) \Big)
		\end{align*}
	
		The probability measure $\Pi^{\mu,\nu}_{\xi}$ is defined as in \eqref{eq:Pixi}. Further, the remainder term on the second line of \eqref{eq:TaylorExpansion} can be upper bounded by:
		\begin{equation}
			\label{eq:bound:remainder:TaylorExpansion}
			\bigl| R_n^{\Pi^{\mu, \nu}}(f) \bigr|
			\leq 
			C \int_{(\bR^e)^{\oplus 2}} |y-x|^{n+1} d\Pi^{\mu, \nu}(x, y),
		\end{equation}
		for a constant $C$ depending only on the bounds for $f$ and its derivatives (including the Lipschitz bounds).
	\end{theorem}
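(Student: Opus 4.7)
The strategy is to reduce to the empirical case already treated by Proposition \ref{proposition:classicTay<=>LionsTay} and then pass to the limit. Since $\mu, \nu \in \cP_{n+1}(\bR^e)$, the coupling $\Pi^{\mu,\nu}$ lies in $\cP_{n+1}((\bR^e)^{\oplus 2})$. I would introduce i.i.d.\ random variables $((X_i, Y_i))_{i \in \bN}$ with common law $\Pi^{\mu,\nu}$ (on some auxiliary probability space), set $\boldsymbol{x}_N = (X_1, \ldots, X_N)$ and $\boldsymbol{y}_N = (Y_1, \ldots, Y_N)$, and form the empirical coupling $\Pi^{\boldsymbol{x}_N, \boldsymbol{y}_N}$ as in Equation \eqref{eq:proposition:classicTay<=>LionsTay}. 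By the Glivenko--Cantelli theorem in Wasserstein distance, almost surely $\Pi^{\boldsymbol{x}_N, \boldsymbol{y}_N} \to \Pi^{\mu,\nu}$ in $\bW^{(q)}$ for every $1 \leq q \leq n+1$; in particular the marginals $\bar{\mu}_N[\boldsymbol{x}_N] \to \mu$ and $\bar{\mu}_N[\boldsymbol{y}_N] \to \nu$.

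Applying Proposition \ref{proposition:classicTay<=>LionsTay} to $\overline{f}$ at $\boldsymbol{x}_N$ and $\boldsymbol{y}_N$ yields
$$f\bigl( \bar{\mu}_N[\boldsymbol{y}_N] \bigr) = \sum_{a \in A^n} \tfrac{1}{|a|!} \, \rD^a f\bigl( \bar{\mu}_N[\boldsymbol{x}_N] \bigr)\bigl[ \Pi^{\boldsymbol{x}_N, \boldsymbol{y}_N} \bigr] + R_n^{\boldsymbol{x}_N, \boldsymbol{y}_N}(\overline{f})$$
for every $N$. I would then pass to the limit $N \to \infty$ term-by-term. The left-hand side converges to $f(\nu)$ by continuity of $f$. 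For each fixed $a \in A^n$, the integrand defining $\rD^a f$ is jointly bounded and $\bW^{(1)}$-Lipschitz by Definition \ref{def:general:Lions:derivative}, and the polynomial factor $\bigotimes_{i=1}^{|a|} (y_{a_i} - x_{a_i})$ has total degree $|a| \leq n$, which is uniformly integrable with respect to $(\Pi^{\boldsymbol{x}_N, \boldsymbol{y}_N})^{\times m[a]}$ thanks to the $(n+1)$-moment bound. Dominated convergence on the product space $((\bR^e)^{\oplus 2})^{\times m[a]}$ then gives $\rD^a f(\bar{\mu}_N[\boldsymbol{x}_N])[\Pi^{\boldsymbol{x}_N, \boldsymbol{y}_N}] \to \rD^a f(\mu)[\Pi^{\mu,\nu}]$. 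The same argument applied to the explicit formula for $R_n^{\boldsymbol{x}_N, \boldsymbol{y}_N}(\overline{f})$ yields convergence to $R_n^{\Pi^{\mu,\nu}}(f)$, once one verifies that the intermediate measure $\bar{\mu}_N[\boldsymbol{x}_N + \xi (\boldsymbol{y}_N - \boldsymbol{x}_N)]$ converges to $\Pi^{\mu,\nu}_\xi$ in $\bW^{(1)}$ uniformly in $\xi \in [0,1]$.

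For the quantitative bound \eqref{eq:bound:remainder:TaylorExpansion}, I would estimate $f^a[\Pi^{\mu,\nu}]$ for each $a \in A_n$ by combining the $\bW^{(1)}$-Lipschitz bound in the measure argument with the Lipschitz bound in the free variables. Since $\bW^{(1)}(\Pi^{\mu,\nu}_\xi, \mu) \leq \xi \int |y-x| d\Pi^{\mu,\nu}(x,y)$, one obtains
$$\bigl| f^a[\Pi^{\mu,\nu}] \bigr| \leq \tfrac{\|f\|_{C_b^{(n)}}}{n(n+1)} \Bigl( \int_{(\bR^e)^{\oplus 2}} |y-x| d\Pi^{\mu,\nu}(x,y) + \sum_{j=1}^{m[a]} |y_j - x_j| \Bigr),$$
which provides the extra factor of $|y-x|$ beyond the degree-$n$ polynomial $\bigotimes (y_{a_i} - x_{a_i})$ appearing in $R_n^{\Pi^{\mu,\nu}}(f)$. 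Plugging this into the definition of $R_n^{\Pi^{\mu,\nu}}(f)$, each resulting summand becomes a product $\prod_{j=1}^{m[a]} \int |y-x|^{k_j} d\Pi^{\mu,\nu}$ over partition blocks with $\sum_j k_j = n+1$ and each $k_j \geq 1$; one application of Jensen's inequality on the product measure collapses these to a bound of the form $C \int |y-x|^{n+1} d\Pi^{\mu,\nu}$.

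The main obstacle I expect is the combinatorial bookkeeping in this last step: after decoupling via Lipschitz continuity, every multi-index of exponents $(k_j)$ with $\sum_j k_j = n+1$ must be recombined via Jensen into the single $(n+1)$-th moment without degrading the constants, and one has to keep track of how the $\bW^{(1)}$-term $\int |y-x| d\Pi^{\mu,\nu}$ (coming from the measure-Lipschitz estimate) is distributed among the $m[a]$ free-variable integrations. A secondary technical point is the uniform-in-$\xi$ convergence of the intermediate measures $\Pi_{\xi}^{\boldsymbol{x}_N, \boldsymbol{y}_N} \to \Pi^{\mu,\nu}_\xi$, which I would address by invoking the $\bW^{(1)}$-Lipschitz continuity of the map $\xi \mapsto \Pi^{\mu,\nu}_\xi$ and combining it with the Glivenko--Cantelli estimate for the endpoints.
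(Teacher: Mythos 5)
Your proposal is correct in substance but follows a genuinely different route from the paper for the expansion itself. The paper proves \eqref{eq:TaylorExpansion} directly by induction on $n$: at rank $n-1$ it rewrites each remainder kernel $f^{a}[\Pi^{\mu,\nu}]$, $a\in A_{n-1}$, via the fundamental theorem of calculus in the interpolation parameter, which splits it into the new rank-$n$ terms $\rD^{(a\cdot j)}f$ (with the correct factorial weights coming from $\int_0^1\int_0^\xi(1-\xi)^{n-2}\,d\theta\,d\xi$) plus the rank-$n$ remainder kernels; no approximation by empirical measures is needed, and the identity is obtained for an arbitrary coupling in one stroke. You instead sample i.i.d.\ pairs from $\Pi^{\mu,\nu}$, invoke Proposition \ref{proposition:classicTay<=>LionsTay}, and pass to the limit using Wasserstein Glivenko--Cantelli together with the boundedness and $\bW^{(1)}$-Lipschitz continuity of the $\partial_a f$. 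This works: the $(n+1)$-moment hypothesis gives the uniform integrability needed for the degree-$\leq n$ polynomial factors (what you call ``dominated convergence'' is really weak convergence plus uniform integrability, since the dominating function changes with $N$), and the uniform-in-$\xi$ convergence of the interpolated empirical measures follows, as you say, from the $1$-Lipschitz character of the map $(x,y)\mapsto x+\xi(y-x)$ applied to the coupling. What your route buys is a conceptual link between the classical finite-dimensional Taylor theorem and the Lions--Taylor expansion (it explains where the combinatorics over $A^n$ come from); what it costs is the extra analytic layer of a.s.\ empirical approximation and identification of the limit of the remainder term, which the paper's induction avoids entirely. Your treatment of the bound \eqref{eq:bound:remainder:TaylorExpansion} --- Lipschitz estimates in the measure and in each free variable, the Beta-function factor $\int_0^1\xi(1-\xi)^{n-1}d\xi$, then Jensen/H\"older on the block moments with $\sum_j k_j=n+1$ --- is essentially identical to the paper's argument.
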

	
	\begin{remark}
		Notice that the integrability property at order $n+1$ (imposed on $\mu$ and $\nu$) explicitly appears in the bound \eqref{eq:bound:remainder:TaylorExpansion}. This is consistent with \eqref{eq:n=1:bound:remainder}, which corresponds to $n=1$.  
		
		Returning to Proposition \ref{proposition:classicTay<=>LionsTay} for a moment, we should note that the empirical distributions $\bar{\mu}_N[x]$ and $\bar{\mu}_N[y]$ have moments of all orders and the inclusion of moment estimates in Theorem \ref{theorem:LionsTaylor1} provides the framework within which we can take limits over the Wasserstein space to extend our Lions-Taylor expansion. 
	\end{remark}
	
	Higher order flat derivatives have been considered in \cite{TseHigher2021}, but this is to study the non-linear Fokker-Planck equation for mean-field equations. It is well understood that when one takes a Lions derivative, one introduces a new variable which takes values over the support of the measure at which one differentiates. However, the nature of the derivatives evolves as more and more iterative derivatives are taken, leading to Taylor expansions that grow super-geometrically when compared to classical tensor space expansions. 
	
	\begin{proof}
		Firstly, we observe that \eqref{eq:n=1:ito:lions}-\eqref{eq:n=1:expression:remainder} can be rewritten in the form:
		\begin{align*}
			f(\nu) - f(\mu) =\rD^{(1)} f(\mu)[ \Pi^{\mu, \nu}] + \int_{(\bR^e)^{\oplus 2}} f^{(1)}[\Pi^{\mu, \nu}] \cdot (y_1-x_1) d\Pi^{\mu, \nu}(x_1, y_1). 
		\end{align*}
		Together with \eqref{eq:n=1:bound:remainder}, this gives the result when $n=1$.
		
		Now we proceed by induction on $n$. Consider an integer $n \geq 2$ such that the conclusion of the statement holds true for any $a\in A_{n-1}$ and any $f \in C_b^{(n-1)} \big( \cP_2(\bR^e); \bR^d \big)$. In turn, for $f \in C_b^{(n)} \big( \cP_2(\bR^e);\bR^d \big)$ and $a \in A_{n-1}$, we have that $\partial_a f: \cP_2(\bR^e) \times (\bR^e)^{\times m[a]} \to \lin\big( (\bR^e)^{\otimes |a|}, \bR^d \big)$ is differentiable in all variables and the derivatives are bounded and Lipschitz. Hence
		\begin{align*}
			f^{a}[&\Pi^{\mu, \nu}] = \int_0^1 \int_0^\xi \frac{d\bigg( \partial_a f\Big(\Pi^{\mu, \nu}_\theta, \big( \boldsymbol{x} + \theta(\boldsymbol{y}-\boldsymbol{x}) \big)_{m\{a\}} \Big) \bigg)}{d\theta} d\theta (1-\xi)^{n-2} d\xi
			\\
			=&\int_0^1 \int_0^\xi \bigg( \int_{(\bR^e)^{\oplus 2}} \partial_\mu \partial_a f\Big( \Pi_\theta^{\mu, \nu}, \big( \boldsymbol{x} + \theta(\boldsymbol{y}-\boldsymbol{x}) \big)_{m\{a\}},  x_{m[a]+1} + \theta(y_{m[a]+1} - x_{m[a]+1}) \Big) 
			\\
			& \hspace{40pt} \cdot (y_{m[a]+1} - x_{m[a]+1}) d\Pi^{\mu, \nu} (x_{m[a]+1}, y_{m[a]+1}) \bigg) d\theta (1-\xi)^{n-2} d\xi 
			\\
			&+\int_0^1 \int_0^\xi \sum_{i=1}^{m[a]} \nabla_{x_i} \partial_a f\Big( \Pi_\theta^{\mu, \nu}, \big(\boldsymbol{x} + \theta(\boldsymbol{y}-\boldsymbol{x})\big)_{m\{a\}} \Big) \cdot (y_i - x_i) d\theta (1-\xi)^{n-2} d\xi
			\\
			=&\tfrac{1}{n(n-1)} \int_{(\bR^e)^{\oplus 2}} \partial_\mu \partial_a f\Big( \mu, \boldsymbol{x}_{m\{a\}}, x_{m[a]+1} \Big) \cdot (y_{m[a]+1} - x_{m[a]+1}) \cdot d\Pi^{\mu, \nu}(x_{m[a]+1}, y_{m[a]+1}) 
			\\
			&+\tfrac{1}{n(n-1)} \sum_{i=1}^{m[a]} \nabla_{x_i} \partial_a f\Big( \mu, \boldsymbol{x}_{m\{a\}} \Big) \cdot (y_i - x_i) 
			\\
			&+ \tfrac{1}{n-1} \int_0^1 \bigg( \int_{(\bR^e)^{\oplus 2}} \partial_\mu \partial_a f\Big( \Pi_\xi^{\mu, \nu}, \big( \boldsymbol{x} + \xi( \boldsymbol{y} - \boldsymbol{x}) \big)_{m\{a\}}, x_{m[a]+1} + \xi( y_{m[a]+1} - x_{m[a]+1}) \Big) 
			\\
			&\hspace{40pt} - \partial_\mu \partial_a f\Big(\mu, \boldsymbol{x}_{m\{a\}}, x_{m[a]+1} \Big)  \cdot ( y_{m[a]+1} - x_{m[a]+1}) d\Pi^{\mu, \nu}(x_{m[a]+1}, y_{m[a]+1})\bigg) (1-\xi)^{n-1} d\xi
			\\
			&+\tfrac{1}{n-1} \int_0^1 \sum_{i=1}^{m[a]} \bigg( \nabla_{x_i} \partial_a f\Big( \Pi_\xi^{\mu, \nu}, \big(\boldsymbol{x} + \xi(\boldsymbol{y} - \boldsymbol{x}) \big)_{m\{a\}} \Big) - \nabla_{x_i}\partial_a f\Big( \mu, \boldsymbol{x}_{m\{a\}}\Big) \bigg) 
			\\
			&\hspace{40pt} \cdot (y_i - x_i) (1-\xi)^{n-1} d\xi. 
		\end{align*}
		Substituting this into Equation \eqref{eq:TaylorExpansion} at rank $n-1$, we get the same expansion at rank $n$.
		
		As for the estimate \eqref{eq:bound:remainder:TaylorExpansion} of the remainder, we have that the error term $R_n^{\Pi^{\mu, \nu}}(f)$ satisfies that
		\begin{align*}
			R_n^{\Pi^{\mu, \nu}}(f)
			\leq&
			\tfrac{1}{(n-1)!} \sum_{a\in A_n} \underbrace{\int_{(\bR^e)^{\oplus 2}} ... \int_{(\bR^e)^{\oplus 2}} }_{\times m[a]} \bigg( \big\| \partial_a f \big\|_{\lip, \mu} \cdot \bW^{(1)}(\nu, \mu) + \sum_{j=1}^{m[a]} \big\| \partial_a f \big\|_{\lip, j} \cdot |y_j - x_j| \bigg) 
			\\
			&\qquad \cdot \prod_{i=1}^{|a|} |y_{a_i} - x_{a_i} | \cdot d\big( \Pi^{\mu, \nu}\big)^{\times m[a]} \Big( (\boldsymbol{x}, \boldsymbol{y})_{m\{a\}}\Big)
			\\
			\lesssim& \| f\|_{C_b^{(n)}} \cdot \sum_{a\in A_n} \bigg( \bW^{(2)} (\mu, \nu) \cdot \prod_{j=1}^{m[a]} \int_{(\bR^e)^{\oplus 2}} |y-x|^{\times |a^{-1}[j]|} d\Pi^{\mu, \nu}(x, y) 
			\\
			&\qquad + \sum_{i=1}^{m[a]}  \prod_{j=1}^{m[a]} \int_{(\bR^e)^{\oplus 2}} |y-x|^{\times (|a^{-1}[j]| + \delta_{i,j}|} d\Pi^{\mu, \nu}(x, y) \bigg)
			\\
			=& O\Bigg( \int_{(\bR^e)^{\oplus 2}} |y-x|^{n+1} d\Pi^{\mu, \nu}(x, y) \Bigg), 
		\end{align*}
		where we used H\"older's inequality.
	\end{proof}
	
	\subsection{Multivariate Lions-Taylor expansion}
	\label{subsect:Multivariate_Lions-Taylor}
	
	The Lions-Taylor expansion given in the statement of Theorem \ref{theorem:LionsTaylor1} cannot suffice for the study of mean-field equations of the form \eqref{eq:meanfield:equation}, as we need to consider functionals depending on both a Euclidean variable $x_0$ and a measure argument $\mu$. To address this increase in complexity, we must revisit the framework introduced in Section \ref{subsection:1-Lip_sup_envelope}, in order to have a convenient system of notation for the mixed derivatives with respect to $x_0$ and $\mu$. 
	
	Indeed, unlike the derivative $\partial_{a} f$ in Equation \eqref{eq:def:general:Lions:derivative}, in which the $m^{\rm th}$ variable $x_{i}$ can only appear if $\partial_{a} f$ contains at least $i$ derivatives with respect to $\mu$, the variable $x_0$ now appears in any derivatives of $f$ whenever $f$ is a function of the form $f(x_0,\mu)$.  
	
	We clarify this in the Definition \ref{definition:A_0^n} below. Intuitively, derivatives with respect to the $x_0$-component are encoded in the corresponding sequence $a$ through insertions of a `$0$'. Repeated $0$'s thus account for repeated derivatives in the direction of $x_0$. There is no constraint on the way that those $0$'s may appear in the corresponding $a$. 
	
	Firstly, we illustrate how some simple functions can be dependent on both a spatial and measure variable:
	\begin{example}
		\label{example:Simple_Lions3}
		As a simple example, consider the measure functional
		\begin{equation}
			f(x_0, \mu) = \int k(x_0, y) d\mu(y) \quad \mbox{so that} \quad F(x_0, X) = \bE \Big[ k\big(x_0,  X(\omega) \big) \Big]. 
		\end{equation}
		For some $(v, h) \in \bR^e \times L^2(\Omega, \cF, \bP; \bR^e)$, we have that
		$$
		F(x_0+v, X+h) - F(x_0, X) = \bE\Big[ k\big( x_0 + v, X(\omega) + h(\omega) \big) - k\big( x_0, X(\omega) \big) \Big]
		$$
		Then the Gateaux derivative in direction $(v,h)$ is
		$$
		DF(X) \big[ (v,h) \big] = \bE\Big[ \nabla_1 k(x_0, X) \cdot v \Big] + \bE\Big[ \nabla_2 k(x_0, X) \cdot h \Big]
		$$
		By extending this to a continuous linear operator over the whole space and using the duality identity that $L^2(\Omega, \bP; \bR^e)^* = L^2(\Omega, \bP; \bR^e)$, we get that the Fr\'echet derivative is the $L^2(\Omega, \bP; \bR^e)$ valued function
		$$
		DF(x_0, X) = \bE\Big[ \nabla_1 k\big( x_0, X(\omega) \big) \Big] + \nabla_2 k(x_0, X). 
		$$
		Then the derivatives of $f$ are
		$$
		\nabla_{x_0} f(x_0, \mu) = \int \nabla_1 k(x_0, y) d\mu(y)
		\quad\mbox{and} \quad
		\partial_\mu f(x_0, \mu)(x_1) = \nabla k(x_0, x_1). 
		$$
		In particular, when $f$ is of the form
		$$
		f(x_0, \mu) = \int k(x_0 - y) d\mu(y)
		$$
		we have that
		$$
		\nabla_{x_0} f(x_0, \mu) = \int \nabla k(x_0 - y) d\mu(y) 
		\quad \mbox{and} \quad
		\partial_{\mu} f(x_0, \mu, x_1) = \nabla k(x_0 - x_1). 
		$$
		
		Further, for $N\in \bN$ let $x_1, ..., x_N \in  \bR^e$, let $\boldsymbol{x} = (x_1, ..., x_N)$ and denote $\bar{\mu}_N\big[ \boldsymbol{x} \big] = \tfrac{1}{N} \sum_{j=1}^N \delta_{x_j}$. We define $\overline{f}_i: (\bR^e)^{\oplus N} \to \bR$ by
		\begin{equation}
			\label{eq:Empirical-Dist_0}
			\overline{f}_i\Big( \boldsymbol{x} \Big) = \overline{f}_i\Big( (x_1, ..., x_N) \Big) = f\Big(x_i, \bar{\mu}_N\big[ \boldsymbol{x} \big] \Big). 
		\end{equation}
		Then
		\begin{align}
			\nonumber
			\nabla_{j} \overline{f}_i\big( \boldsymbol{x} \big) =& \nabla_{x_j} \overline{f}_i\big( (x_1, ..., x_N) \big) 
			\\
			\label{eq:Empirical-1Deriv_0}
			=& \tfrac{1}{N} \partial_\mu f\Big( x_i, \bar{\mu}_N\big[ \boldsymbol{x} \big], x_j \Big) + \nabla_{x_0} f\Big( x_i, \bar{\mu}_N\big[ \boldsymbol{x} \big] \Big) \cdot \delta_{i = j}. 
		\end{align}
		We contrast this with Equation \eqref{eq:Empirical-1Deriv}. 
	\end{example}
	
	As illustrated in Example \ref{example:Simple_Lions2}, by considering the lift of a function $f:\bR^e \times \cP_2(\bR^e) \to \bR^d$ and taking the second order derivative, we find that the Fr\'echet derivative is naturally expressed in terms of two functions which we identify with the derivatives
	$$
	\partial_{\mu} \partial_{\mu} f (\mu, x_1, x_2)
	\quad \mbox{and} \quad
	\nabla_x \partial_{\mu} f( \mu, x). 
	$$
	However, if we consider the second order derivatives of some function of the form $f: \bR^e \times \cP_2(\bR^e) \to \bR^d$, it turns out that the Fr\'echet derivative of the lift can be expressed as five functions which are identifiable with derivatives
	\begin{align*}
		&\nabla_{x_0} \nabla_{x_0} f(x_0, \mu), 
		\quad
		\nabla_{x_0} \partial_\mu f(x_0, \mu, x_1), 
		\quad
		\partial_{\mu} \nabla_{x_0} f(x_0, \mu, x_1), 
		\\
		&\nabla_{x_1} \partial_{\mu} f(x_0, \mu, x_1)
		\quad \mbox{and} \quad
		\partial_{\mu} \partial_{\mu} f(x_0, \mu, x_1, x_2). 
	\end{align*}
	
	\begin{example}
		\label{example:Simple_Lions4}
		Following on from Equation \eqref{eq:Empirical-1Deriv_0} for any $j_1, j_2 \in \{1, ..., N\}$ we have that 
		\begin{align}
			\nonumber
			\nabla_{j_1,j_2}& \overline{f}_i\Big( \boldsymbol{x} \Big)
			=
			\nabla_{j_2} \nabla_{j_1} \overline{f}_i\Big( (x_1, ..., x_N) \Big)
			\\
			\nonumber
			=& \big(\nabla_{x_0} \nabla_{x_0} f\big)\Big( x_i, \bar{\mu}_N\big[\boldsymbol{x} \big] \Big) \cdot \delta_{j_1 = j_2 = i}
			\\
			\nonumber
			&+ \tfrac{1}{N} \big(\partial_{\mu} \nabla_{x_0} f \big)\Big( x_i, \bar{\mu}_N\big[ \boldsymbol{x} \big], x_{j_2} \Big) \cdot \delta_{j_1 = i} +  \tfrac{1}{N} \big(\nabla_{x_0} \partial_{\mu}  f \big)\Big( x_i, \bar{\mu}_N\big[ \boldsymbol{x} \big], x_{j_1} \Big) \cdot \delta_{j_2 = i} 
			\\
			\label{eq:example:Simple_Lions4}
			&+ \tfrac{1}{N} \big(\nabla_{x_1} \partial_\mu f \big)\Big( x_i, \bar{\mu}_N\big[ \boldsymbol{x} \big], x_{j_1} \Big) \delta_{j_1 = j_2} + \tfrac{1}{N^2} \big(\partial_\mu \partial_\mu f\big) \Big( x_i, \bar{\mu}_N\big[ \boldsymbol{x} \big], x_{j_1}, x_{j_2} \Big). 
		\end{align}
		Therefore, by abstracting these summations we make the observation that for $i, j, k \in \{1, ..., N\}$ not equal, 
		\begin{align*}
			&(i, i) \iff \Big\{ (0,0), (0,1), (1, 0), (1,1), (1, 2) \Big\},
			\\
			&(i, j) \iff \Big\{ (0,1), (1,2) \Big\},
			\quad
			(j, i) \iff \Big\{ (1, 0), (1,2) \Big\},
			\\
			&(j, j) \iff \Big\{ (1,1), (1,2) \Big\},
			\quad
			(j, k) \iff \Big\{ (1,2) \Big\}. 
		\end{align*}
	\end{example}
	
	With this in mind, we extend Definition \ref{def:a} to capture the additional derivatives of a function of two variables:
	\begin{definition}
		\label{definition:A_0^n}
		Let $k, n\in \bN_0$ and denote $(a\cdot b)$ to be the concatenation of two sequences $a$ and $b$. Let $A_{k,n}[0]$ be the collection of all sequences 
		$a' = (a_i')_{i=1, ..., k+n}$ of length $k+n$ taking values in $\{0, ..., n\}$ of the form $a' = \sigma( (0) \cdot a)$, where $(0) = (0)_{i=1, ..., k}$ is the sequence of length $k$ with all entries $0$, $a\in A_n$ and $\sigma$ is a $(k,n)$-shuffle, i.e., a permutation of $\{1, ..., n+k\}$ such that $\sigma(1) < ... < \sigma(k)$ and $\sigma(k+1)<... < \sigma(n+k)$. 
		
		For a given $n \in \bN$, we let 
		$$
		\A{n}= \bigcup_{k=0}^n A_{k,n-k}[0], 
		\quad
		A^n[0] = \bigcup_{i=0}^n \A{i} \quad \mbox{and} 
		\quad
		A[0] = \bigcup_{n=0}^{\infty} \A{n}. 
		$$
		Given $a\in A_n[0]$, we denote 
		$$
		|a| = n \quad \mbox{and}\quad m[a] = \max_{i=1, ..., n} a_i.
		$$ 
		We denote $\llbracket a\rrbracket_c$ to be the equivalence class of all sequences such that
		\begin{align*}
			(b_i)_{i=1, ..., n} \in \llbracket a\rrbracket_c \quad \iff \quad &\Big\{ b^{-1} \Big\} = \Big\{ a^{-1}[j]: j=0, ..., m[a] \Big\} \in \scP\big( \{1, ..., n\} \big)
			\\ 
			& \mbox{and}\quad b^{-1}[c] = a^{-1}[0]. 
		\end{align*}
		
		For $a, a'\in A_n[0]$, we say that $a \subseteq a'$ if and only if
		\begin{align*}
			&a^{-1}[0] \subseteq (a')^{-1}[0] \quad \mbox{and}
			\\
			&\forall j=1, ..., m[a]\quad \exists j' \in \{0, 1, ..., m[a']\} \quad \mbox{such that} \quad a^{-1}[j] \subseteq (a')^{-1}[j']. 
		\end{align*}
		
		For a sequence $\boldsymbol{b} = (b_i)_{i=1, ..., n}$ and $a\in A_n[0]$ such that $\boldsymbol{b} \subseteq \llbracket a \rrbracket_c$ we define
		\begin{equation}
			\label{eq:K-sequence2}
			\boldsymbol{b}\circ(a) = \Big( b_{a^{-1}[j]} \Big)_{j=1, ..., m[a]}. 
		\end{equation}	
	\end{definition}
	
	The rationale for requiring $\sigma$ to be a $(k,n)$-suffle is that the positive entries of $a$, which encode the derivatives with respect to the measure argument $\mu$, obey the prescriptions of Definition  \ref{def:a}. $k$ is the number of $0$ in the sequence and $n$ is the number of non-zero values in the sequence. 
	
	Following on from Example \ref{example:Simple_Lions4}, we remark that these five collections of partition sequences can be identified with the five sets
	\begin{align*}
		\Big\{ a' \in A_2[0]: a' \subseteq a \Big\} \quad \mbox{for each $a\in A_2[0]$. }
	\end{align*}
	
	\begin{example}
		We have
		\begin{align*}
			A_{1,1}[0] =& \Big\{ (0,1), (1,0) \Big\}, 
			\quad 
			A_{1,2}[0] = \Big\{ (0,1,1), (0,1,2), (1,0,1), (1,0,2), (1,1,0), (1,2,0) \Big\}, 
			\\
			A_{2,1}[0] =& \Big\{ (0,0,1), (0,1,0), (1,0,0) \Big\}, 
			\quad
			A_{0,3}[0] = \Big\{ (1,1,1), (1,1,2), (1,2,1), (1,2,2), (1,2,3) \Big\}
		\end{align*}
		and so on. Similarly, 
		\begin{align*}
			A_1[0] =& \Big\{ (0), (1) \}, 
			\quad 
			A_2[0] = \Big\{ (0,0), (0,1), (1, 0), (1,1), (1,2) \Big\} \quad\mbox{and}
			\\
			A_3[0] =& \Big\{ (0,0,0), (0,0,1), (0,1,0), (1,0,0), (0,1,1), (1,0,1), (1,1,0), 
			\\
			&\quad (0,1,2), (1,0,2), (1,2,0), (1,1,1), (1,1,2), (1,2,1), (1,2,2), (1,2,3) \Big\}
		\end{align*}
	\end{example}
	
	In contrast to the techniques of Section \ref{subsection:Lions-TaylorEx} where we graded partition sequences based on their length, we need a way of grading the partition sequences of $A[0]$ that captures how many elements of the sequence are tagged (equal to 0) and detagged (positive integer valued). 
	
	\begin{definition}
		\label{definition:Special-A}
		Let $\alpha, \beta>0$. We define $\scG_{\alpha, \beta}: A[0] \to \bR^{+}$ by
		$$
		\scG_{\alpha, \beta}[a] = \alpha \cdot \Big| a^{-1}\big[ 0 \big] \Big| + \beta \cdot \Big| \bigcup_{j=1}^{m[a]} a^{-1}\big[ j\big] \Big|. 
		$$
		
		For $\gamma>\alpha \wedge \beta$, we define
		$$
		A^{\gamma, \alpha, \beta}[0]:= \Big\{ a\in A[0]: \scG_{\alpha, \beta}[a] \leq \gamma \Big\}, 
		$$
		Further, we define the three sets
		\begin{equation}
			\label{eq:definition:Special-A}
			\left.
			\begin{aligned}
				A_{+}^{\gamma, \alpha, \beta}[0]:= \Big\{ a\in A[0]:\quad & \scG_{\alpha, \beta}[a] \in \big( \gamma-(\alpha\vee\beta), \gamma-(\alpha\wedge\beta) \big] , 
				\\
				&\quad \forall k= 1, ..., |a|, \quad (a_i)_{i=1, ..., |a|-k} \notin A_{+}^{\gamma, \alpha, \beta} \Big\}. 
				\\
				A_{\ast}^{\gamma, \alpha, \beta}[0]:= \Big\{ a\in A[0]:\quad & \scG_{\alpha, \beta}[a] \in \big( \gamma-(\alpha\wedge\beta), \gamma \big], 
				\\
				&\quad \forall k= 1, ..., |a|, \quad (a_i)_{i=1, ..., |a|-k} \notin A_{+}^{\gamma, \alpha, \beta} \Big\}. 
				\\
				A_{\times}^{\gamma, \alpha, \beta}[0]:= \Big\{ a\in A[0]: \quad & \scG_{\alpha, \beta}[a] \in \big( \gamma-(\alpha\wedge\beta), \gamma \big], 
				\\
				&\quad \exists k= 1, ..., |a|, \quad (a_i)_{i=1, ..., |a|-k} \in A_{+}^{\gamma, \alpha, \beta} \Big\}. 
			\end{aligned}
			\right\}
		\end{equation}
	\end{definition}
	
	The choice of the values $\alpha$, $\beta$ and $\gamma$ allow us to describe a collection of derivatives with a specified number of spatial and measure variables. All the necessary derivatives are collected neatly into the single set $A^{\gamma, \alpha, \beta}$. The three sets described in Equation \eqref{eq:definition:Special-A} allow us to list all the terms of the Taylor expansion remainder term, where each set indicating the derivatives that occur in a particular form within the remainder.  
	
	\subsubsection*{Higher-order multivariate Lions differentiability}
	
	Following our notation $\partial_{a}$ for $a \in A_{n}$, we want to define an iterated multivariate Lions derivative in terms of a partition sequence $a\in \A{n}$: for $n\in \bN$ and $a\in A_n[0]$, we define inductively
	\begin{align*}
		\partial_{(0)} =& \nabla_{x_0}, \quad \partial_{(1)} = \partial_\mu.  
		\\
		\partial_{(a_1, ..., a_{k-1}, a_k)} =&
		\begin{cases} 
			\nabla_{x_0} \cdot \partial_{(a_1, ..., a_{k-1})} & \quad a_k =0, 
			\\
			\nabla_{x_{a_k}} \cdot \partial_{(a_1, ..., a_{k-1})} & \quad 0 < a_k \leq \max \{ a_1, ..., a_{k-1}\}, 
			\\
			\partial_\mu \cdot \partial_{(a_1, ..., a_{k-1})} & \quad a_k > \max \{a_1, ..., a_{k-1}\}.  
		\end{cases}
	\end{align*}

	For brevity, given a partition sequence $a\in A_n[0]$ we denote $a^{-}:=(a_i)_{i=1, ..., n-1}\in A_{n-1}[0]$. 
	
	\begin{definition}
		\label{def:general:Lions-spatial:derivative}
		Let $\alpha, \beta>0$ and let $\gamma > \alpha \wedge \beta$. We say that a function $f: \bR^e \times \cP_2(\bR^e) \to \bR^d$ belongs in $C_b\big[ A^{\gamma, \alpha, \beta}[0] \big]\big(\bR^e \times \cP_2(\bR^e); \bR^d\big)$ if there exists a collection of functions $\big\{ \partial_a f: a\in A^{\gamma, \alpha, \beta} \big\}$ such that:
		\begin{enumerate}
			\item For each $a\in A^{\gamma, \alpha, \beta}[0]$, the function $\partial_a f$ satisfies
			\begin{align*}
				\partial_a f : \bR^e \times \cP_2(\bR^e) \times (\bR^e)^{\times m[a]} &\to \lin\big( (\bR^e)^{\otimes |a|},  \bR^d\big)
				\\
				\big( x_0, \mu, (x_1, ..., x_{m[a]}) \big) &\mapsto \partial_a f(x_0, \mu, x_1, ..., x_{m[a]}).
			\end{align*} 
			\item For each $a\in A^{\gamma, \alpha, \beta}[0]$, the functions $\partial_a f$ are bounded and Lipschitz continuous, the second space equipped with the $\bW^{(1)}$-distance. 
			\item For any $(a_i)_{i=1, ..., n} \in A^{\gamma, \alpha, \beta}[0]$ such that $a_n = 0$, the function $\partial_{a^-} f(x_0, \mu, ..., x_{m[a]})$ is differentiable with respect to $x_0$ and 
			$$
			\nabla_{x_0} \partial_{a^-} f(x_0, \mu, x_1, ..., x_{m[a]}) = \partial_a f (x_0, \mu, x_1, ..., x_{m[a]}) . 
			$$
			\item For any $(a_i)_{i=1, ..., n} \in A^{\gamma, \alpha, \beta}[0]$ such that $a_n = m[a]$, the function $\partial_{a^-} f(x_0, \mu, ..., x_{m[a]-1})$ is differentiable with respect to $\mu$ and
			$$
			\partial_\mu \partial_{a^-} f(x_0, \mu, x_1, ..., x_{m[a]-1}, x_{m[a]}) = \partial_a f(x_0, \mu, x_1, ..., x_{m[a]-1}, x_{m[a]}). 
			$$
			\item For any $(a_i)_{i=1, ..., n} \in A^{\gamma, \alpha, \beta}[0]$ such that $a_n \in \{1, ..., m[a]-1\}$, the function \\ $\partial_{a^-} f(x_0, \mu, ..., x_{m[a^-]})$ is differentiable in $x_{a_n}$ and 
			$$
			\nabla_{x_{a_n}} \partial_{a^-} f(x_0, \mu, x_1, ..., x_{m[a]}) = \partial_a f(x_0, \mu, x_1, ..., x_{m[a]}). 
			$$
		\end{enumerate}
	\end{definition}
	
	Motivated by Equation \eqref{eq:particle:system}, we consider the derivatives of the vector field $\boldsymbol{f}$ (see Equation \eqref{eq:bf-field} below):
	\begin{lemma}
		\label{lemma:Empirical-nDeriv-fullsystem}
		Let $n, N\in \bN$. Let $f: \cP_2(\bR^e) \to \bR^d$ and suppose that $f\in C_b\big[ A^{1, 1/n, 1/n} \big] \big( \bR^e \times \cP_2(\bR^e); \bR^d \big)$. For $\boldsymbol{x} = (x_1, ..., x_N) \in (\bR^e)^{\oplus N}$, we define $\boldsymbol{f}: (\bR^e)^{\oplus N} \to (\bR^d)^{\oplus N}$ by
		\begin{equation}
			\label{eq:bf-field}
			\boldsymbol{f}\Big( \boldsymbol{x} \Big) = \boldsymbol{f}\Big( (x_1, ..., x_N) \Big) = \bigoplus_{i=1}^N \overline{f}_i \Big( \boldsymbol{x} \Big)
		\end{equation}
		where $\overline{f}_i: (\bR^e)^{\oplus N} \to \bR^d$ is defined by
		\begin{equation*}
			\overline{f}_i \Big( \boldsymbol{x} \Big) = f\Big( x_i, \bar{\mu}_N\big[ \boldsymbol{x} \big] \Big), \quad \bar{\mu}_N\big[ \boldsymbol{x} \big] = \sum_{j=1}^N \delta_{x_j}. 
		\end{equation*}
		Let $\boldsymbol{j}$ be a multi-index taking values in the set $\{1, ..., N\}$ such that $|\boldsymbol{j}|\leq n$. Then
		\begin{equation}
			\label{eq:lemma:Empirical-nDeriv-fullsystem*}
			\nabla_{\boldsymbol{j}} \overline{f}_i \Big( \boldsymbol{x} \Big) 
			= 
			\sum_{\substack{a \in A_{|\boldsymbol{j}|}[0] \\ a\subseteq \llbracket \boldsymbol{j}\rrbracket_i }} \tfrac{1}{N^{m[a]}} \partial_a f \Big( x_i, \bar{\mu}_N\big[ \boldsymbol{x} \big], \boldsymbol{x}_{\boldsymbol{j}\circ(a)} \Big) 
		\end{equation}
		where 
		\begin{equation*}
			\nabla_{\boldsymbol{j}} = \nabla_{j_{|\boldsymbol{j}|}} ... \nabla_{j_1}
		\end{equation*}
		and (recalling Equation \eqref{eq:K-sequence2})
		\begin{equation*}
			\boldsymbol{x}_{\boldsymbol{j}\circ(a)} = \Big( x_{j_{a^{-1}[1]}}, ..., x_{j_{a^{-1}[m[a]]}} \Big). 
		\end{equation*}
		
		In particular, this means that
		\begin{equation}
			\label{eq:lemma:Empirical-nDeriv-fullsystem}
			\nabla_{\boldsymbol{j}} \boldsymbol{f} \Big( \boldsymbol{x} \Big) = \bigoplus_{i=1}^N \bigg( \sum_{\substack{a \in A_{|\boldsymbol{j}|}[0] \\ a\subseteq \llbracket \boldsymbol{j} \rrbracket_i }} \tfrac{1}{N^{m[a]}} \cdot \partial_a f \Big( x_i, \bar{\mu}_N\big[ \boldsymbol{x} \big], \boldsymbol{x}_{\boldsymbol{j}\circ(a)} \Big) \bigg). 
		\end{equation}
	\end{lemma}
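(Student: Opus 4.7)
The plan is to mirror the inductive argument used for Lemma \ref{Empirical-nDeriv} but with the extra bookkeeping that the spatial argument $x_i$ of $\overline{f}_i$ now provides an additional target for each application of $\nabla_{x_{j_n}}$. Observe first that Equation \eqref{eq:lemma:Empirical-nDeriv-fullsystem} is an immediate direct sum of Equation \eqref{eq:lemma:Empirical-nDeriv-fullsystem*}, so it suffices to prove \eqref{eq:lemma:Empirical-nDeriv-fullsystem*}. I would proceed by induction on $|\boldsymbol{j}|$. The case $|\boldsymbol{j}| = 1$ is exactly Equation \eqref{eq:Empirical-1Deriv_0}, where the partition sequence $(1)$ (weighted by $N^{-1}$) corresponds to the case $j_1 \neq i$ and the partition sequence $(0)$ corresponds to $j_1 = i$, giving the two elements of $A_1[0]$ compatible with $\llbracket \boldsymbol{j}\rrbracket_i$.

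For the inductive step, assume \eqref{eq:lemma:Empirical-nDeriv-fullsystem*} holds at rank $n-1$ and fix $j_n \in \{1,\ldots,N\}$. For each $a \in A_{n-1}[0]$ with $a \subseteq \llbracket \boldsymbol{j} \rrbracket_i$, applying $\nabla_{x_{j_n}}$ to the term $\partial_a f\big( x_i, \bar{\mu}_N[\boldsymbol{x}], \boldsymbol{x}_{\boldsymbol{j}\circ(a)} \big)$ produces three types of contributions by the chain rule:
\begin{itemize}
\item A term $\nabla_{x_0} \partial_a f = \partial_{(a,0)} f$ whenever $j_n = i$ (differentiation of the spatial slot);
\item For each $k \in \{1,\ldots,m[a]\}$ with $j_n = j_{a^{-1}[k]}$, a term $\nabla_{x_k} \partial_a f = \partial_{(a,k)} f$ (differentiation of an already-used free variable);
\item A term $\tfrac{1}{N} \partial_\mu \partial_a f = \tfrac{1}{N} \partial_{(a, m[a]+1)} f$ evaluated at the new free variable $x_{j_n}$, arising from the derivative of $\bar{\mu}_N[\boldsymbol{x}]$ in $x_{j_n}$.
\end{itemize}
The resulting extensions appear with coefficient $N^{-m[a]}$ in the first two cases (since $m[(a,0)] = m[(a,k)] = m[a]$) and coefficient $N^{-(m[a]+1)}$ in the third (matching $m[(a,m[a]+1)] = m[a]+1$).

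The main obstacle is the purely combinatorial check that these three types of extensions parametrise, without double-counting, precisely the set
\begin{equation*}
\Big\{ a' \in A_n[0] : a' \subseteq \llbracket(\boldsymbol{j}, j_n)\rrbracket_i \Big\}.
\end{equation*}
To establish this I would argue that any such $a'$ must be of the form $a' = (a, a'_n)$ with $a = (a'_i)_{i=1,\ldots,n-1} \in A_{n-1}[0]$ and $a \subseteq \llbracket \boldsymbol{j}\rrbracket_i$, since removing the last entry preserves both membership in $A[0]$ and the $\subseteq$ relation restricted to the first $n-1$ positions. The value $a'_n = 0$ corresponds to the first case and is compatible with $a' \subseteq \llbracket(\boldsymbol{j},j_n)\rrbracket_i$ exactly when $j_n = i$; the value $a'_n \in \{1,\ldots,m[a]\}$ is compatible exactly when $j_n$ lies in the block of $\llbracket \boldsymbol{j}\rrbracket_i$ indexed by $a^{-1}[a'_n]$, i.e.\ $j_n = j_{a^{-1}[a'_n]}$; and the value $a'_n = m[a]+1$ (a genuinely new block) is always compatible and picks up the extra $N^{-1}$ factor. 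Assembling these contributions over all $a \in A_{n-1}[0]$ with $a \subseteq \llbracket \boldsymbol{j}\rrbracket_i$ reproduces the right-hand side of \eqref{eq:lemma:Empirical-nDeriv-fullsystem*} at rank $n$, closing the induction.
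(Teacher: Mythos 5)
Your proposal is correct and takes essentially the same route as the paper's own proof: induction on $|\boldsymbol{j}|$ with the base case taken from the first- and second-order computations, the chain rule applied to $\partial_a f\big(x_i, \bar{\mu}_N[\boldsymbol{x}], \boldsymbol{x}_{\boldsymbol{j}\circ(a)}\big)$ producing exactly the three extension terms $(a\cdot 0)$, $(a\cdot k)$ and $(a\cdot (m[a]+1))$ with the matching powers of $N$, and the re-indexing of the resulting double sum as a single sum over $a' \in A_{|(\boldsymbol{j},j_n)|}[0]$ with $a' \subseteq \llbracket(\boldsymbol{j},j_n)\rrbracket_i$. The only difference is that you make explicit the combinatorial check (that every such $a'$ arises exactly once by truncating its last entry) which the paper leaves implicit.
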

	
	\begin{proof}
		Referring back to Equation \eqref{eq:example:Simple_Lions4}, it is simple to verify that Equation \eqref{eq:lemma:Empirical-nDeriv-fullsystem*} holds for any choice of $\boldsymbol{j}$ such that $|\boldsymbol{j}| = 1$ or $|\boldsymbol{j}| = 2$. Further, it is simple to verify that Equation \eqref{eq:lemma:Empirical-nDeriv-fullsystem*} implies \eqref{eq:lemma:Empirical-nDeriv-fullsystem}, so we proceed to prove \eqref{eq:lemma:Empirical-nDeriv-fullsystem*} via induction on $|\boldsymbol{j}|$. Suppose that $\boldsymbol{j}$ is a multi-index such that $|\boldsymbol{j}|=n-1$ and suppose additionally that we have
		\begin{equation*}
			\nabla_{\boldsymbol{j}} \overline{f}_i \Big( \boldsymbol{x} \Big) 
			= 
			\sum_{\substack{a \in A_{|\boldsymbol{j}|}[0] \\ a\subseteq \llbracket \boldsymbol{j}\rrbracket_i }} \tfrac{1}{N^{m[a]}} \partial_a f \Big( x_i, \bar{\mu}_N\big[ \boldsymbol{x} \big], \boldsymbol{x}_{\boldsymbol{j}\circ(a)} \Big). 
		\end{equation*}
		Let $j_n\in \{1, .., N\}$. Then for any $a\in A_{n-1}[0]$ such that $a\subseteq \llbracket \boldsymbol{j}\rrbracket_i$, 
		\begin{align*}
			\nabla_{j_n}& \big( \partial_a f\big) \Big( \bar{\mu}_N\big[ \boldsymbol{x} \big], \boldsymbol{x}_{\boldsymbol{j}\circ(a)} \Big) 
			\\
			&= \partial_{(a\cdot 0)} f\Big( x_i, \bar{\mu}_N\big[ \boldsymbol{x} \big], \boldsymbol{x}_{\boldsymbol{j}\circ(a)} \Big) \cdot \delta_{\{ j_n = i\}}
			+
			\sum_{\iota = 1}^{m[a]} \partial_{(a\cdot \iota)} f\Big( x_i, \bar{\mu}_N\big[ \boldsymbol{x} \big], \boldsymbol{x}_{\boldsymbol{j}\circ(a)} \Big) \cdot \delta_{\{ j_n = j_{a^{-1}[\iota]} \}} 
			\\
			&\qquad + 
			\tfrac{1}{N} \cdot \partial_\mu \partial_a f\Big( x_i, \bar{\mu}_N\big[ \boldsymbol{x} \big], \boldsymbol{x}_{\boldsymbol{j}\circ(a)}, x_{j_n} \Big). 
		\end{align*}
		Hence
		\begin{align*}
			\nabla_{j_n} \nabla_{\boldsymbol{j}} &\overline{f}_i\Big( \boldsymbol{x} \Big) = \sum_{\substack{a \in A_{|\boldsymbol{j}|}[0] \\ a \subseteq \llbracket \boldsymbol{j} \rrbracket}} \tfrac{1}{N^{m[a]}} \cdot \nabla_{j_n} \big( \partial_a f \big) \Big( x_i, \bar{\mu}_N\big[ \boldsymbol{x} \big], \boldsymbol{x}_{\boldsymbol{j}\circ(a)} \Big)
			\\
			=& \sum_{\substack{a \in A_{|\boldsymbol{j}|}[0] \\ a \subseteq \llbracket \boldsymbol{j} \rrbracket}} \tfrac{1}{N^{m[a]}} \bigg( \partial_{(a\cdot 0)} f\Big( x_i, \bar{\mu}_N\big[ \boldsymbol{x} \big], \boldsymbol{x}_{\boldsymbol{j}\circ(a)} \Big) + \sum_{\iota=1}^{m[a]} \partial_{(a\cdot \iota)} f\Big( x_i, \bar{\mu}_N\big[ \boldsymbol{x} \big], \boldsymbol{x}_{\boldsymbol{j}\circ(a)} \Big)\cdot \delta_{\{ j_n = j_{a^{-1}[\iota]} \}} 
			\\
			&\qquad + \tfrac{1}{N} \cdot \partial_\mu \partial_a f \Big(x_i, \bar{\mu}_N \big[ \boldsymbol{x} \big], \boldsymbol{x}_{\boldsymbol{j}\circ(a)} \Big)\bigg)
			\\
			=& \sum_{\substack{a \in A_{|(\boldsymbol{j}, j_n)|} \\ a\subseteq \llbracket (\boldsymbol{j}, j_n) \rrbracket}} \tfrac{1}{N^{m[a]}} \cdot \partial_a f\Big( x_i, \bar{\mu}_N\big[ \boldsymbol{x} \big], \boldsymbol{x}_{(\boldsymbol{j}, j_n)\circ(a)} \Big)
		\end{align*}
		which implies the inductive hypothesis. 
	\end{proof}
	
	We can now extend the differential operator defined in \eqref{eq:D^a:without:0} to the multivariate case. Let $k, n \in \bN_0$ and let $a\in A_{k, n}$. Let $x_0, y_0\in \bR^e$ and let $\Pi^{\mu, \nu}$ be a measure on $(\bR^e)^{\oplus 2}$ with marginal distribution $\mu, \nu \in \cP_{n+1}$. Then we define the operator
	\begin{align}
		\nonumber
		\rD^a& f(x_0, \mu)[ y_0-x_0, \Pi^{\mu, \nu}]
		\\
		\label{eq:rDa}
		=& \underbrace{\int_{(\bR^e)^{\oplus 2}} ... \int_{(\bR^e)^{\oplus 2}} }_{\times m[a]} \partial_a f\Big( x_0, \mu, \boldsymbol{x}_{m\{a\}} \Big) \cdot \bigotimes_{i=1}^{|a|} ( y_{a_i} - x_{a_i}) \cdot d\big( \Pi^{\mu, \nu}\big)^{\times m[a]} \Big( (\boldsymbol{x}, \boldsymbol{y})_{m\{a\}}\Big)
	\end{align}
	where as before we have
	\begin{align*}
		\boldsymbol{x}_{m\{a\}} =& \big( x_1, ..., x_{m[a]} \big)
		\\
		d\big( \Pi^{\mu, \nu}\big)^{\times m[a]} \Big( (\boldsymbol{x}, \boldsymbol{y})_{m\{a\}}\Big) =& d \Pi^{\mu, \nu}\big( (x_1, y_1) \big) \cdot ... \cdot d \Pi^{\mu, \nu}\big( (x_{m[a]}, y_{m[a]}) \big). 
	\end{align*}
	
	Next, we introduce a norm on the collection of functions described in Definition \ref{def:general:Lions-spatial:derivative}. For clarity, we use the convention that
	$$
	\sum_{a \in A^{\gamma, \alpha, \beta}[0]} = \sum_{\substack{ a\in A[0] \\ \scG_{\alpha, \beta}[a] \leq \gamma }} = \sum_{a \in A[0]}^{\gamma, \alpha, \beta}. 
	$$
	
	\begin{definition}
		\label{definition:FunctionNorm}
		For $a\in \A{n}$, we denote
		\begin{equation}
			\label{eq:definition:FunctionNorm1}
			\| \partial_a f\|_\infty:= \sup_{x_0 \in \bR^e} \sup_{\mu \in \cP_2(\bR^e)}  \sup_{(x_1, ..., x_{m[a]}) \in (\bR^e)^{\times m[a]}} \big| \partial_a f(x_0, \mu, x_1, ..., x_{m[a]}) \big|. 
		\end{equation}
		
		Further, we denote
		\begin{equation}
			\label{eq:definition:FunctionNorm2}
			\left.\begin{aligned}
				\big\| \partial_a f \big\|_{\lip, 0}
				&:=
				\sup_{\substack{x_0, y_0\in \bR^e \\ \mu \in \cP_1(\bR^e) \\ x_1, ..., x_{m[a]} \in \bR^e}} \frac{\big| \partial_a f(x_0, \mu, ..., x_{m[a]}) - \partial_af(y_0, \mu, ..., x_{m[a]}) \big|}{|x_0 - y_0|}, 
				\\
				\big\| \partial_a f \big\|_{\lip, \mu}
				&:=
				\sup_{\substack{ \mu, \nu\in \cP_1(\bR^e) \\ x_0, x_1, ..., x_{m[a]} \in \bR^e}} \frac{\big| \partial_a f(x_0, \mu, ..., x_{m[a]}) - \partial_af(x_0, \nu, ..., x_{m[a]}) \big|}{\bW^{(1)}(\mu, \nu)}, 
				\\
				\big\| \partial_a f \big\|_{\lip, j}
				&:= 
				\sup_{\substack{x_0,x_1, ..., x_{m[a]} \in \bR^e \\ y_j \in \bR^e \\ \mu \in \cP_1(\bR^e)}} \frac{\big| \partial_a f(x_0, \mu, ..., x_j, ..., x_{m[a]}) - \partial_af(x_0, \mu, ..., y_j, ..., x_{m[a]}) \big|}{|x_j - y_j|}. 
			\end{aligned}\right\rbrace
		\end{equation}
		
		For $f\in C_b\big[ A^{\gamma, \alpha, \beta}[0] \big]\big(\bR^e \times \cP_2(\bR^e); \bR^d\big)$, we define
		\begin{align}
			\nonumber
			\big\| f &\big\|_{C_b[A^{\gamma, \alpha, \beta}[0]]} 
			:= 
			\sum_{ a\in A[0]}^{\gamma, \alpha, \beta} \big\| \partial_a f \big\|_\infty 
			+ 
			\sum_{a\in A_{\ast}[0]}^{\gamma, \alpha, \beta} \bigg( \big\| \partial_a f \big\|_{\lip, 0} + \big\| \partial_a f \big\|_{\lip, \mu} + \sum_{j=1}^{m[a]} \big\| \partial_a f \big\|_{\lip, j} \bigg)
			\\
			\label{eq:definition:FunctionNorm}
			&+
			\left\{ 
			\begin{aligned}
				\sum_{a\in A_{+}[0]}^{\gamma, \alpha, \beta} \bigg( \big\| \partial_a f \big\|_{\lip, \mu} + \sum_{j=1}^{m[a]} \big\| \partial_a f \big\|_{\lip, j} \bigg) + \sum_{a\in A_{\times}[0]}^{\gamma, \alpha, \beta} \big\| \partial_a f \big\|_{\lip, 0} 
				\quad & \quad 
				\mbox{if $\beta>\alpha$}
				\\
				\sum_{a\in A_{\times}[0]}^{\gamma, \alpha, \beta} \bigg( \big\| \partial_a f \big\|_{\lip, \mu} + \sum_{j=1}^{m[a]} \big\| \partial_a f \big\|_{\lip, j} \bigg) + \sum_{a\in A_{+}[0]}^{\gamma, \alpha, \beta} \big\| \partial_a f \big\|_{\lip, 0}. 
				\quad &\quad 
				\mbox{if $\beta<\alpha$}
			\end{aligned}
			\right.
		\end{align}
	\end{definition}
	
	Now we extend Proposition \ref{proposition:classicTay<=>LionsTay} to the function $\boldsymbol{f}$ described above:
	\begin{proposition}
		\label{proposition:classicTay<=>LionsTay*}
		Let $n, N\in \bN$. Let $f: \bR^e \times \cP_2(\bR^e) \to \bR^d$ ans suppose that $f\in C_b\big[A^{n}[0]\big]\big( \bR^e \times \cP_2(\bR^e); \bR^d\big)$. We define $\boldsymbol{f}: (\bR^e)^{\oplus N} \to (\bR^d)^{\oplus N}$ by Equation \eqref{eq:bf-field} and additionally for $\boldsymbol{x}, \boldsymbol{y}\in (\bR^e)^{\oplus N}$ we define $\Pi^{\boldsymbol{x}, \boldsymbol{y}} \in \cP_2(\bR^e \oplus \bR^e)$ by Equation \eqref{eq:proposition:classicTay<=>LionsTay}. Then
		\begin{equation}
			\label{eq:proposition:classicTay<=>LionsTay*}
			\boldsymbol{f}\Big( \boldsymbol{y} \Big) = \sum_{a\in A^{n}[0]} \frac{1}{|a|!} \bigoplus_{i=1}^N \rD^a f \Big( x_i, \bar{\mu}_N\big[ \boldsymbol{x} \big] \Big)\Big[ y_i - x_i, \Pi^{\boldsymbol{x}, \boldsymbol{y}} \Big] + \boldsymbol{R}_n^{\boldsymbol{x}, \boldsymbol{y}}\big( \boldsymbol{f} \big)
		\end{equation}
		where
		\begin{equation*}
			\boldsymbol{R}_n^{\boldsymbol{x}, \boldsymbol{y}}\big( \boldsymbol{f} \big) 
			= \sum_{a\in A_n[0]} \tfrac{1}{(n-1)!} \cdot \bigoplus_{i = 1}^N \underbrace{\int ... \int}_{\times m[a]} f^a\big[ (y_i, x_i), \Pi^{\boldsymbol{x}, \boldsymbol{y}} \big] \cdot \bigotimes_{\iota = 1}^{|a|} \big( \hat{y}_{a_{\iota}} - \hat{x}_{a_{\iota}} \big) \cdot d\big( \Pi^{\boldsymbol{x}, \boldsymbol{y}} \big)^{\times m[a]} \Big( (\hat{\boldsymbol{x}}, \hat{\boldsymbol{y}})_{m\{a\}} \Big)
		\end{equation*}
		and
		\begin{align*}
			&f^a\big[ (y_i, x_i), \Pi^{\boldsymbol{x}, \boldsymbol{y}} \big] 
			\\
			&\qquad = \int_0^1 \bigg( \partial_a f\Big( x_i^{\xi}, \bar{\mu}_N \big[ \boldsymbol{x}+\xi(\boldsymbol{y}-\boldsymbol{x}) \big], \boldsymbol{x}_{m\{a\}}^{\xi} \Big) - \partial_a f\Big( x_i, \bar{\mu}_N\big[ \boldsymbol{x} \big], \boldsymbol{x}_{m\{a\}} \Big) \bigg) \cdot (1-\xi)^{|a| - 1} d\xi. 
			\\
			&d\big( \Pi^{\boldsymbol{x}, \boldsymbol{y}} \big)^{\times m[a]} \Big( (\hat{\boldsymbol{x}}, \hat{\boldsymbol{y}})_{m\{a\}} \Big) 
			= 
			d\delta_{(x_i, y_i)} (\hat{x}_0, \hat{y}_0) \cdot d\Pi^{\boldsymbol{x}, \boldsymbol{y}}\big( \hat{x}_1, \hat{y}_1 \big) \cdot ... \cdot d\Pi^{\boldsymbol{x}, \boldsymbol{y}}\big( \hat{x}_{m[a]}, \hat{y}_{m[a]} \big). 
		\end{align*}
	
		Finally, when $\big\| f \big\|_{C_b[A^n[0]]}< \infty$ we have that
		\begin{align}
			\nonumber
			\Big| \boldsymbol{R}_n^{\boldsymbol{x}, \boldsymbol{y}}\big( \boldsymbol{f} \big) \Big|_{(\bR^d)^{\oplus N}} \leq& \tfrac{1}{n!} \sum_{a\in A_n[0]} \bigg( \big\| \partial_a f \big\|_{\lip, 0} + \big\| \partial_a f \big\|_{\lip, \mu} + \sum_{j=1}^{m[a]} \big\| \partial_a f \big\|_{\lip, j} \bigg) 
			\\
			\label{eq:proposition:classicTay<=>LionsTayRem++}
			&\cdot \bigg( \sum_{i=1}^N \big| x_i - y_i \big|^{\times |a^{-1}[0]|} \bigg) \cdot \prod_{j=1}^{m[a]} \bigg( \frac{1}{N} \sum_{i=1}^N \big| x_i - y_i \big|^{\times |a^{-1}[j]|} \bigg) 
		\end{align}
	\end{proposition}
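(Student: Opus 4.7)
The plan is to mirror the proof of Proposition \ref{proposition:classicTay<=>LionsTay}, working componentwise on $\boldsymbol{f} = \bigoplus_{i=1}^N \overline{f}_i$ and substituting Lemma \ref{lemma:Empirical-nDeriv-fullsystem} in place of Lemma \ref{Empirical-nDeriv}. For each fixed $i\in \{1,\dots,N\}$, the function $\overline{f}_i:(\bR^e)^{\oplus N} \to \bR^d$ is classically smooth, so the standard multivariate Taylor expansion gives
$$
\overline{f}_i(\boldsymbol{y}) = \sum_{|\boldsymbol{j}|=0}^{n} \tfrac{1}{|\boldsymbol{j}|!} \nabla_{\boldsymbol{j}} \overline{f}_i(\boldsymbol{x}) \cdot \bigotimes_{\iota=1}^{|\boldsymbol{j}|} (y_{j_\iota}-x_{j_\iota}) + R_n^{\boldsymbol{x},\boldsymbol{y}}(\overline{f}_i),
$$
with the classical integral remainder involving $\nabla_{\boldsymbol{j}}\overline{f}_i(\boldsymbol{x}+\xi(\boldsymbol{y}-\boldsymbol{x})) - \nabla_{\boldsymbol{j}}\overline{f}_i(\boldsymbol{x})$ integrated against $(1-\xi)^{n-1} d\xi$. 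Applying Lemma \ref{lemma:Empirical-nDeriv-fullsystem} to each $\nabla_{\boldsymbol{j}} \overline{f}_i$ rewrites the expansion as a sum over partition sequences $a \in A_{|\boldsymbol{j}|}[0]$ with $a\subseteq \llbracket \boldsymbol{j}\rrbracket_i$, producing contributions $N^{-m[a]} \partial_a f(x_i, \bar{\mu}_N[\boldsymbol{x}], \boldsymbol{x}_{\boldsymbol{j}\circ(a)})$.

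Next I would interchange the order of summation so that the outer sum runs over $a \in A^n[0]$ and the inner sum runs over all multi-indices $\boldsymbol{j}$ compatible with $a$ through the relation $a\subseteq \llbracket\boldsymbol{j}\rrbracket_i$. For each such $a$, the detagged coordinates of $\boldsymbol{j}$ (those in $\bigcup_{k\geq 1} a^{-1}[k]$) run freely over $\{1,\dots,N\}$, so the prefactor $N^{-m[a]} \sum_{j_1,\dots,j_{m[a]}=1}^N$ becomes exactly an integral against $(\Pi^{\boldsymbol{x},\boldsymbol{y}})^{\times m[a]}$ in the free variables $(\hat{x}_1,\hat{y}_1),\dots,(\hat{x}_{m[a]},\hat{y}_{m[a]})$, while the tagged coordinates (those in $a^{-1}[0]$) are frozen to the value $i$ and thus contribute the Dirac factor $\delta_{(x_i,y_i)}$ on $(\hat{x}_0,\hat{y}_0)$. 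Under this identification the increment tensor $\bigotimes_\iota (y_{j_\iota} - x_{j_\iota})$ becomes $\bigotimes_\iota (\hat{y}_{a_\iota} - \hat{x}_{a_\iota})$, matching precisely the definition of $\rD^a f(x_i, \bar{\mu}_N[\boldsymbol{x}])[y_i-x_i, \Pi^{\boldsymbol{x},\boldsymbol{y}}]$ in \eqref{eq:rDa}. Re-assembling the direct sum over $i$ yields \eqref{eq:proposition:classicTay<=>LionsTay*}, and the same argument on the integral remainder of the classical Taylor expansion (combined with the identity $\tfrac{1}{(n-1)!}\int_0^1 (1-\xi)^{n-1} d\xi = \tfrac{1}{n!}$) produces the stated form of $\boldsymbol{R}_n^{\boldsymbol{x},\boldsymbol{y}}(\boldsymbol{f})$.

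For the bound \eqref{eq:proposition:classicTay<=>LionsTayRem++}, I would control the increment $\partial_a f(x_i^\xi,\bar{\mu}_N[\boldsymbol{x}+\xi(\boldsymbol{y}-\boldsymbol{x})], \boldsymbol{x}_{m\{a\}}^\xi) - \partial_a f(x_i,\bar{\mu}_N[\boldsymbol{x}],\boldsymbol{x}_{m\{a\}})$ inside $f^a$ by splitting it along the three types of variables: the $x_0$-part using $\|\partial_a f\|_{\lip,0}$, the measure part using $\|\partial_a f\|_{\lip,\mu}$ together with the trivial bound $\bW^{(1)}(\bar{\mu}_N[\boldsymbol{x}],\bar{\mu}_N[\boldsymbol{x}+\xi(\boldsymbol{y}-\boldsymbol{x})]) \leq \tfrac{1}{N}\sum_i|x_i-y_i|$, and the $m[a]$ free-variable parts using $\|\partial_a f\|_{\lip,j}$. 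Integrating the resulting tensor of increments against $\delta_{(x_i,y_i)} \otimes (\Pi^{\boldsymbol{x},\boldsymbol{y}})^{\times m[a]}$ produces the factor $|x_i - y_i|^{|a^{-1}[0]|}$ for the $0$-coordinate and, for $j\geq 1$, the empirical moments $\tfrac{1}{N}\sum_i |x_i - y_i|^{|a^{-1}[j]|}$; summing over $i \in \{1,\dots,N\}$ inside the direct-sum norm on $(\bR^d)^{\oplus N}$ then puts $\sum_i |x_i-y_i|^{|a^{-1}[0]|}$ in the leading position as written. The main obstacle is the combinatorial bookkeeping: matching each classical multi-index $\boldsymbol{j}$, together with its agreement/disagreement pattern with the fixed index $i$, to the correct shuffle class in $A_{k,n-k}[0]$, so that the tagged entries $a^{-1}[0]$ generate exactly the $\delta_{(x_i,y_i)}$ factor associated to the $x_0$-component of $\rD^a$, and the detagged entries generate exactly $m[a]$ copies of the coupling $\Pi^{\boldsymbol{x},\boldsymbol{y}}$; this is the non-trivial combinatorial content that Definition \ref{definition:A_0^n} is designed to encode, and verifying it carefully is the heart of the argument.
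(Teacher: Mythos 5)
Your proposal follows essentially the same route as the paper's own proof: componentwise classical Taylor expansion of $\overline{f}_i$, substitution of Lemma \ref{lemma:Empirical-nDeriv-fullsystem}, re-indexing the sums over multi-indices as sums over partition sequences $a\in A^n[0]$ with the tagged entries frozen to $i$ (the $\delta_{(x_i,y_i)}$ factor) and the detagged block values summed into integrals against $\big(\Pi^{\boldsymbol{x},\boldsymbol{y}}\big)^{\times m[a]}$, followed by the Lipschitz splitting and $\tfrac{1}{(n-1)!}\int_0^1(1-\xi)^{n-1}d\xi=\tfrac{1}{n!}$ for the remainder. The paper treats the combinatorial interchange and the Lipschitz estimate more tersely (dropping the explicit $\bW^{(1)}$ and increment factors exactly as in the bound \eqref{eq:proposition:classicTay<=>LionsTayRem+}), but the content is the same as what you outline.
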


	\begin{proof}
		The proof is similar to that of Proposition \ref{proposition:classicTay<=>LionsTay}: Using the classical Taylor expansion Equation \eqref{eq:classic-Taylor} and applying Lemma \ref{lemma:Empirical-nDeriv-fullsystem} we can conclude that
		\begin{align*}
			\overline{f}_i\Big( \boldsymbol{y} \Big) =& \sum_{\substack{\boldsymbol{j} \\ |\boldsymbol{j}| = 0}}^n \frac{1}{|\boldsymbol{j}|!} \cdot \nabla_{\boldsymbol{j}} \overline{f}\Big( \boldsymbol{x} \Big) \cdot \bigotimes_{\iota=1}^{|\boldsymbol{j}|} (y_{j_{\iota}} - x_{j_{\iota}}) + R_n^{\boldsymbol{x}, \boldsymbol{y}} \big( \overline{f}_i \big)
			\\
			=& \sum_{\substack{\boldsymbol{j} \\ |\boldsymbol{j}| = 0}}^n \frac{1}{|\boldsymbol{j}|!} \sum_{\substack{a \in A_{|\boldsymbol{j}|}[0] \\ a\subseteq \llbracket \boldsymbol{j} \rrbracket_i }} \tfrac{1}{N^{m[a]}} \cdot \partial_a f \Big( x_i, \bar{\mu}_N\big[\boldsymbol{x}\big], \boldsymbol{x}_{\boldsymbol{j}\circ (a)} \Big) \cdot \bigotimes_{\iota=1}^{|\boldsymbol{j}|} (y_{j_{\iota}} - x_{j_{\iota}}) + R_n^{\boldsymbol{x}, \boldsymbol{y}} \big( \overline{f}_i \big)
			\\
			=&\sum_{a\in A^n[0]} \frac{1}{|a|!} \frac{1}{N^{m[a]}} \sum_{j_1, ..., j_{m[a]} = 1}^N \partial_a f\Big( x_i, \bar{\mu}_N\big[ \boldsymbol{x} \big], (\boldsymbol{x}_{j})_{m\{a\}} \Big) \cdot \bigotimes_{\iota=1}^{|a|} (y_j - x_j)_{a_\iota} + R_n^{\boldsymbol{x}, \boldsymbol{y}} \big( \overline{f}_i \big). 
		\end{align*}
		where we denote 
		\begin{equation*}
			(\boldsymbol{x}_j)_{m\{a\}} = \big( x_{j_1}, ..., x_{j_{m[a]}} \big), 
			\quad
			\big( x_{j_0}, y_{j_0} \big) = \big( x_i, y_i \big)
			\quad \mbox{and}\quad 
			(y_j - x_j)_{a_{\iota}} = \big( y_{j_{a_{\iota}}} - x_{j_{a_{\iota}}} \big), 
		\end{equation*}
		and
		\begin{align*}
			&R_n^{\boldsymbol{x}, \boldsymbol{y}}\big( \overline{f}_i \big) = \sum_{\substack{\boldsymbol{j}: \\ |\boldsymbol{j}| = n}} \int_0^1 \bigg( \nabla_{\boldsymbol{j}} \overline{f}\Big( \boldsymbol{x} + \xi(\boldsymbol{y}-\boldsymbol{x}) \Big)  - \nabla_{\boldsymbol{j}} \overline{f}\Big( \boldsymbol{x} \Big) \bigg) \cdot \bigotimes_{\iota=1}^n \big( y_{i_\iota} - x_{i_\iota} \big) \cdot (1-\xi)^{n-1} d\xi
			\\
			&= \sum_{a\in A_n[0]} \frac{1}{(n-1)!} \cdot \frac{1}{N^{m[a]}} \sum_{j_1, ..., j_{m[a]} = 1}^N \bigg( \partial_a f\Big( x_i^{\xi}, \bar{\mu}_N\big[ \boldsymbol{x}^{\xi} \big], \big( \boldsymbol{x}_j^\xi \big)_{m\{a\}} \Big) 
			\\
			&\qquad - \partial_a f \Big( x_i, \bar{\mu}\big[\boldsymbol{x}\big], (\boldsymbol{x}_j)_{m\{a\}} \Big) \bigg) \cdot \bigotimes_{\iota = 1}^{|a|} \big( y_j - x_j \big)_{a_{\iota}} 
		\end{align*}
		where 
		\begin{align*}
			&x_i^{\xi} = x_i + \xi(y_i - x_i), \quad \boldsymbol{x}^{\xi}= x\boldsymbol{+}\xi(\boldsymbol{y}-\boldsymbol{x}) \quad \mbox{and}\quad 
			\\
			&\big( \boldsymbol{x}_{j}^{\xi} \big)_{m\{a\}} = \Big( x_{j_1} + \xi(y_{j_1} - x_{j_1}), ..., x_{j_{m[a]}} + \xi(y_{j_{m[a]}} - x_{j_{m[a]}}) \Big). 
		\end{align*}
		
		Next, we replace the summations over indices by integrals over the couplings $\Pi^{\boldsymbol{x}, \boldsymbol{y}}$ and obtain
		\begin{equation*}
			\overline{f}_i\big( \boldsymbol{y}\big) = \sum_{a\in A^{n}[0]} \frac{1}{|a|!} \rD^a f \Big(x_i, \bar{\mu}_N\big[ \boldsymbol{x} \big] \Big)\Big[ y_i - x_i, \Pi^{\boldsymbol{x}, \boldsymbol{y}} \Big] + {R}_n^{\boldsymbol{x}, \boldsymbol{y}}\big( \overline{f} \big)
		\end{equation*}
		and this implies Equation \eqref{eq:proposition:classicTay<=>LionsTay*}. Finally, Equation \eqref{eq:proposition:classicTay<=>LionsTayRem++} follows from the same ideas as Equation \eqref{eq:proposition:classicTay<=>LionsTayRem+}
	\end{proof}
	A nice detail about Equation \eqref{eq:proposition:classicTay<=>LionsTayRem++} is that the norm on the left hand side is over a $d \times N$ dimensional Euclidean space so we would expect this to scale order $N$. This is the case because a single one of the $m[a]+1$ summations is not normalised. 
	
	Theorem \ref{theorem:LionsTaylor1} admits a multivariate generalisation. In contrast to Proposition \ref{proposition:classicTay<=>LionsTay*} where the number of derivatives in the spatial and measure variables was equal (see $A^n[0]$ in Equation \eqref{eq:proposition:classicTay<=>LionsTay*}), the goal of this Theorem is to describe a Taylor expansion of some function $f:\bR^e \times \cP_2(\bR^e) \to \bR^d$ that takes a different number of derivatives in the measure and spatial variables (where the number of each derivative is described by the grading $\scG_{\alpha, \beta}$):
	\begin{theorem}
		\label{theorem:LionsTaylor2}
		Let $\alpha, \beta>0$ let $\gamma>\alpha\wedge \beta$ and denote $n:= \big\lfloor \tfrac{\gamma}{\beta} \big\rfloor$. Let $f: \bR^e \times \cP_2(\bR^e) \to \bR^d$ such that $f\in C_b\big[ A^{\gamma, \alpha, \beta}[0] \big]\big(\bR^e \times \cP_2(\bR^e); \bR^d\big)$. Let $x_0, y_0\in \bR^e$ and let $\mu, \nu\in \cP_{n+1}(\bR^e)$ with joint distribution $\Pi^{\mu, \nu}$. Then we have that
		\begin{equation}
			\label{eq:FullTaylorExpansion}
			f(y_0, \nu) = \sum_{a\in A[0]}^{\gamma, \alpha, \beta} \frac{1}{|a|!}\cdot \rD^af(x_0, \mu)\big[ y_0 - x_0, \Pi^{\mu, \nu} \big] + R_{\gamma, \alpha, \beta}^{(x_0, y_0), \Pi^{\mu, \nu}}(f)
		\end{equation}
		where
		\begin{align}
			\nonumber
			&R_{\gamma, \alpha, \beta}^{(x_0, y_0), \Pi^{\mu, \nu}}(f) 
			\\
			\nonumber
			&= \sum_{a\in A_{\ast}[0]}^{\gamma, \alpha, \beta} \tfrac{1}{(|a| - 1)!} \underbrace{ \int_{(\bR^e)^{\oplus 2}} ... \int_{(\bR^e)^{\oplus 2}} }_{\times m[a]} f_{\ast}^{a}[ (x_0, y_0), \Pi^{\mu, \nu}] 
			\cdot 
			\bigotimes_{p=1}^{|a|} ( y_{a_p} - x_{a_p}) \cdot d\big( \Pi^{\mu, \nu}\big)^{\times m[a]} \Big( (\boldsymbol{x}, \boldsymbol{y})_{m\{a\}}\Big)
			\\
			\nonumber
			&\quad+\sum_{a\in A_{+}[0]}^{\gamma, \alpha, \beta} \tfrac{1}{(|a| - 1)!} \underbrace{ \int_{(\bR^e)^{\oplus 2}} ... \int_{(\bR^e)^{\oplus 2}} }_{\times m[a]} f_{+}^{a}[ (x_0, y_0), \Pi^{\mu, \nu}] 
			\cdot 
			\bigotimes_{p=1}^{|a|} ( y_{a_p} - x_{a_p}) \cdot d\big( \Pi^{\mu, \nu}\big)^{\times m[a]} \Big( (\boldsymbol{x}, \boldsymbol{y})_{m\{a\}}\Big)
			\\
			\label{eq:theorem:LionsTaylor2_remainder1}
			&\quad+\sum_{a\in A_{\times}[0]}^{\gamma, \alpha, \beta} \tfrac{1}{(|a| - 1)!} \underbrace{ \int_{(\bR^e)^{\oplus 2}} ... \int_{(\bR^e)^{\oplus 2}} }_{\times m[a]} f_{\times}^{a}[ (x_0, y_0), \Pi^{\mu, \nu}] 
			\cdot 
			\bigotimes_{p=1}^{|a|} ( y_{a_p} - x_{a_p}) \cdot d\big( \Pi^{\mu, \nu}\big)^{\times m[a]} \Big( (\boldsymbol{x}, \boldsymbol{y})_{m\{a\}}\Big). 
		\end{align}
		For compactness, we denote $x_i^{\xi} = x_i + \xi\big( y_i - x_i \big)$ and define
		\begin{equation*}
			f_{\ast}^{a}[(x_0, y_0), \Pi^{\mu, \nu}] 
			= 
			\int_0^1 \bigg( \partial_a f \Big(x_0^\xi, \Pi^{\mu, \nu}_\xi,  \boldsymbol{x}_{m\{a\}}^\xi \Big) 
			- \partial_a f \Big(x_0, \mu, \boldsymbol{x}_{m\{a\}} \Big)\bigg) \cdot (1-\xi)^{|a|-1} d\xi, 
		\end{equation*}
		\begin{equation*}
			f_{+}^{a}[(x_0, y_0), \Pi^{\mu, \nu}]
			=\left\{ 
			\begin{aligned}
				&\left.
				\begin{aligned}
					\int_0^1& \bigg( \partial_a f \Big(x_0^\xi, \Pi^{\mu, \nu}_\xi, \boldsymbol{x}_{m\{a\}}^\xi \Big) 
					\\
					&- \partial_a f \Big(x_0^\xi, \mu, \boldsymbol{x}_{m\{a\}} \Big)\bigg) \cdot (1-\xi)^{|a|-1} d\xi
				\end{aligned} \right\} \mbox{ if $\beta>\alpha$,}
				\\
				&\left.
				\begin{aligned}
					\int_0^1& \bigg( \partial_a f \Big(x_0^\xi, \Pi^{\mu, \nu}_\xi, \boldsymbol{x}_{m\{a\}}^\xi \Big) 
					\\
					&- \partial_a f \Big(x_0, \Pi^{\mu, \nu}_\xi, \boldsymbol{x}_{m\{a\}}^\xi  \Big)\bigg) \cdot (1-\xi)^{|a|-1} d\xi,
				\end{aligned} \right\} \mbox{ if $\beta<\alpha$,}
				\\
				&0 \quad \mbox{ if $\beta=\alpha$.}
			\end{aligned}\right.
		\end{equation*}
		and
		\begin{equation*}
			f_{\times}^{a}[(x_0, y_0), \Pi^{\mu, \nu}]
			=\left\{ 
			\begin{aligned}
				&\left.
				\begin{aligned}
					\int_0^1& \bigg( \partial_a f \Big(x_0^{\xi}, \mu, \boldsymbol{x}_{m\{a\}} \Big) 
					\\
					&- \partial_a f \Big(x_0, \mu, \boldsymbol{x}_{m\{a\}} \Big)\bigg) \cdot (1-\xi)^{|a|-1} d\xi
				\end{aligned} \right\} \mbox{ if $\beta>\alpha$,}
				\\
				&\left.
				\begin{aligned}
					\int_0^1& \bigg( \partial_a f \Big(x_0, \Pi^{\mu, \nu}_\xi, \boldsymbol{x}_{m\{a\}}^{\xi} \Big) 
					\\
					&- \partial_a f \Big(x_0, \mu, \boldsymbol{x}_{m\{a\}} \Big)\bigg) \cdot (1-\xi)^{|a|-1} d\xi,
				\end{aligned} \right\} \mbox{ if $\beta<\alpha$,}
				\\
				&0 \quad \mbox{ if $\beta=\alpha$.}
			\end{aligned}\right.
		\end{equation*}
		The probability measure $\Pi^{\mu,\nu}_{\xi}$ is defined as in \eqref{eq:Pixi}. 
		
		Finally, the error term satisfies 
		\begin{align}
			\label{eq:theorem:LionsTaylor2_Rem1}
			&R_{\gamma, \alpha, \beta}^{(x_0, y_0), \Pi^{\mu, \nu}}(f)
			\\
			\nonumber
			&\lesssim \sum_{a\in A_{\ast}[0]}^{\gamma, \alpha, \beta} \bigg( \big\| \partial_a f\big\|_{\lip, 0} \cdot |y_0 - x_0|^{\times (|a^{-1}[0]|+1)} \cdot \prod_{p=1}^{m[a]} \int_{(\bR^e)^{\oplus 2}} |y_p - x_p|^{\times|a^{-1}[p]|} d\Pi^{\mu, \nu}(x_p, y_p) 
			\\
			\nonumber
			&\qquad 
			+ \big\| \partial_a f\big\|_{\lip, \mu} \cdot \bW^{(1)} (\mu,\nu) \cdot |y_0 - x_0|^{\times|a^{-1}[0]|} \cdot \prod_{p=1}^{m[a]} \int_{(\bR^e)^{\oplus 2}} |y_p - x_p|^{\times|a^{-1}[p]|} d\Pi^{\mu, \nu}(x_p, y_p) 
			\\
			&\qquad
			\nonumber
			+ \sum_{q=1}^{m[a]} \big\| \partial_a f\big\|_{\lip, q} \cdot |y_0 - x_0|^{\times|a^{-1}[0]|} \cdot \prod_{p=1}^{m[a]} \int_{(\bR^e)^{\oplus 2}} |y_p - x_p|^{\times(|a^{-1}[p]| + \delta_{p,q})} d\Pi^{\mu, \nu}(x_p, y_p) \bigg)
			\\
			\nonumber
			&+\left\{
			\begin{aligned}
				\left.
				\begin{aligned}
					& \sum_{a\in A_{+}[0]}^{\gamma, \alpha, \beta} \bigg( \big\| \partial_a f\big\|_{\lip, \mu} \cdot \bW^{(1)} (\mu,\nu) \cdot |y_0 - x_0|^{\times|a^{-1}[0]|} \cdot \prod_{p=1}^{m[a]} \int_{(\bR^e)^{\oplus 2}} |y_p - x_p|^{\times|a^{-1}[p]|} d\Pi^{\mu, \nu}(x_p, y_p) 
					\\
					&\qquad
					+ \sum_{q=1}^{m[a]} \big\| \partial_a f\big\|_{\lip, q} \cdot |y_0 - x_0|^{\times|a^{-1}[0]|} \cdot \prod_{p=1}^{m[a]} \int_{(\bR^e)^{\oplus 2}} |y_p - x_p|^{\times(|a^{-1}[p]| + \delta_{p,q})} d\Pi^{\mu, \nu}(x_p, y_p) \bigg)
					\\
					&+ \sum_{a\in A_{\times}[0]}^{\gamma, \alpha, \beta} \bigg( \big\| \partial_a f\big\|_{\lip, 0} \cdot |y_0 - x_0|^{\times(|a^{-1}[0]|+1)} \cdot \prod_{p=1}^{m[a]} \int_{(\bR^e)^{\oplus 2}} |y_p - x_p|^{\times|a^{-1}[p]|} d\Pi^{\mu, \nu}(x_p, y_p) \bigg)
				\end{aligned}
				\right\} 
				\\
				\mbox{if $\alpha<\beta$, }
				\\
				\left.
				\begin{aligned}
					& \sum_{a\in A_{\times}[0]}^{\gamma, \alpha, \beta} \bigg( \big\| \partial_a f\big\|_{\lip, \mu} \cdot \bW^{(1)} (\mu,\nu) \cdot |y_0 - x_0|^{\times|a^{-1}[0]|} \cdot \prod_{p=1}^{m[a]} \int_{(\bR^e)^{\oplus 2}} |y_p - x_p|^{\times|a^{-1}[p]|} d\Pi^{\mu, \nu}(x_p, y_p) 
					\\
					&\qquad
					+ \sum_{q=1}^{m[a]} \big\| \partial_a f\big\|_{\lip, q} \cdot |y_0 - x_0|^{\times|a^{-1}[0]|} \cdot \prod_{p=1}^{m[a]} \int_{(\bR^e)^{\oplus 2}} |y_p - x_p|^{\times(|a^{-1}[p]| + \delta_{p,q})} d\Pi^{\mu, \nu}(x_p, y_p) \bigg)
					\\
					&+ \sum_{a\in A_{+}[0]}^{\gamma, \alpha, \beta} \bigg( \big\| \partial_a f\big\|_{\lip, 0} \cdot |y_0 - x_0|^{\times(|a^{-1}[0]|+1)} \cdot \prod_{p=1}^{m[a]} \int_{(\bR^e)^{\oplus 2}} |y_p - x_p|^{\times|a^{-1}[p]|} d\Pi^{\mu, \nu}(x_p, y_p) \bigg)
				\end{aligned}
				\right\}
				\\
				\mbox{if $\beta<\alpha$. }
			\end{aligned}
			\right.
		\end{align}
	\end{theorem}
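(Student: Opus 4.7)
The plan is to prove \eqref{eq:FullTaylorExpansion}--\eqref{eq:theorem:LionsTaylor2_remainder1} by induction on the grading $\scG_{\alpha,\beta}$, imitating the strategy used for Theorem \ref{theorem:LionsTaylor1}, and then to derive the bound \eqref{eq:theorem:LionsTaylor2_Rem1} by applying the Lipschitz estimates from Definition \ref{definition:FunctionNorm} term by term. The base case is the joint first-order expansion obtained by combining the classical Taylor expansion in $x_0$ with the Lions expansion \eqref{eq:n=1:ito:lions}--\eqref{eq:n=1:expression:remainder} along the interpolation $\xi \mapsto (x_0 + \xi(y_0-x_0), \Pi^{\mu,\nu}_\xi)$; one checks directly that its remainder already splits into three pieces corresponding to moving only $x_0$, only the coupled measure, or both simultaneously, thus exhibiting the dichotomy on $\alpha$ versus $\beta$ that defines $f^a_+$ and $f^a_\times$.

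For the inductive step, I would proceed exactly as in the proof of Theorem \ref{theorem:LionsTaylor1}: at each iteration, pick a non-terminal sequence $a$ (one that is in $A^{\gamma,\alpha,\beta}[0]$ but not yet in any of the three terminal classes) and apply the fundamental theorem of calculus to the increment $\partial_a f(x_0^\xi,\Pi^{\mu,\nu}_\xi,\boldsymbol{x}^\xi_{m\{a\}})-\partial_a f(x_0,\mu,\boldsymbol{x}_{m\{a\}})$ along the coupled path $\theta\in[0,\xi]$. By Definition \ref{def:general:Lions-spatial:derivative}, the $\theta$-derivative splits into a spatial contribution (yielding the extended sequence $a\cdot 0$), a sum of $m[a]$ contributions of the form $\nabla_{x_{a_n}}\partial_a f$ (yielding $a\cdot j$ for $1\leq j\leq m[a]$), and a Lions contribution $\partial_\mu\partial_a f$ tested against the coupling (yielding $a\cdot(m[a]+1)$). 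Integration by parts in $\xi$ against $(1-\xi)^{|a|-1}$ produces the correct factorial coefficient $\tfrac{1}{|a|(|a|-1)!}$, so that each new leading term joins the main Taylor sum and each new remainder term has the structural form of $f^{a'}_\ast$ for some $a'$ with $\scG_{\alpha,\beta}[a'] = \scG_{\alpha,\beta}[a]+\alpha$ or $\scG_{\alpha,\beta}[a]+\beta$.

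The classification in \eqref{eq:definition:Special-A} is engineered so that the induction terminates cleanly: $A_\ast^{\gamma,\alpha,\beta}[0]$ collects sequences that cannot be extended by either a spatial $0$ or a measure increment without breaching the grading bound, and those contribute via the full increment $f^a_\ast$; $A_+^{\gamma,\alpha,\beta}[0]$ and $A_\times^{\gamma,\alpha,\beta}[0]$ collect sequences reached only after an $A_+$-predecessor, so the associated increment has already been partially split and only the one-sided contributions $f^a_+$ and $f^a_\times$ are needed, whose sum reconstructs exactly one full $f^a_\ast$-type increment. The case split $\alpha<\beta$ versus $\alpha>\beta$ versus $\alpha=\beta$ simply records whether the spatial or measure direction is the last to be expanded before crossing $\gamma$, which is why $f^a_+$ and $f^a_\times$ vanish when $\alpha=\beta$. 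For the quantitative bound \eqref{eq:theorem:LionsTaylor2_Rem1}, I would then apply the three Lipschitz norms to each of $f^a_\ast, f^a_+, f^a_\times$: a $\|\partial_a f\|_{\lip,0}$ estimate contributes an extra factor $|y_0-x_0|$ (raising the spatial exponent from $|a^{-1}[0]|$ to $|a^{-1}[0]|+1$), a $\|\partial_a f\|_{\lip,\mu}$ estimate contributes a factor $\bW^{(1)}(\mu,\nu)$, and a $\|\partial_a f\|_{\lip,j}$ estimate introduces an extra $|y-x|$ inside the $j$th coupling integral (so the exponent becomes $|a^{-1}[j]|+\delta_{p,q}$). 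The product of the remaining coupling integrals is then handled by H\"older's inequality exactly as in the final display of the proof of Theorem \ref{theorem:LionsTaylor1}.

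The main obstacle is combinatorial rather than analytic: one must verify that every sequence generated during the inductive differentiation lands in exactly one of the three terminal classes $A_\ast, A_+, A_\times$, and that the split of $f^a_+$ and $f^a_\times$ along $\alpha$ versus $\beta$ is consistent across all branches. The definitions in \eqref{eq:definition:Special-A} are precisely tailored for this, but tracking the partial increments and checking that nothing is counted twice (or missed) requires care, especially when $\alpha$ and $\beta$ are incommensurable and the terminal classes involve strict inequalities on $\scG_{\alpha,\beta}$. Once this bookkeeping is done, the analytic content is a direct extension of Theorem \ref{theorem:LionsTaylor1}.
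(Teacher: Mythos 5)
Your plan is workable and its analytic core is the same as the paper's: the fundamental theorem of calculus along $\xi\mapsto\big(x_0^\xi,\Pi^{\mu,\nu}_\xi\big)$, the splitting of the $\theta$-derivative of $\partial_a f$ into the contributions indexed by $(a\cdot 0)$, $(a\cdot j)$ for $1\leq j\leq m[a]$ and $(a\cdot(m[a]+1))$, the integration by parts against $(1-\xi)^{|a|-1}$ producing the factorial coefficients, the telescoping identity $f^a_\ast=f^a_+ + f^a_\times$ at sequences where only the cheaper direction remains affordable, and the term-by-term Lipschitz estimates for \eqref{eq:theorem:LionsTaylor2_Rem1}. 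Where you genuinely differ is the organisation of the induction. You propose a single induction over the grading $\scG_{\alpha,\beta}$, expanding one non-terminal sequence at a time until every branch lands in one of the three terminal classes. The paper instead proceeds in two stages: it first settles the diagonal case $\alpha=\beta$ by induction on the number of derivatives (where $A_+$ and $A_\times$ play no role), and then, at fixed $\gamma$, perturbs $\alpha$ away from $\beta$ in small steps $\varepsilon$ chosen so that exactly one new grading value enters $A^{\gamma,\alpha-\varepsilon,\beta}[0]$, reorganising the previously obtained remainder terms and reclassifying sequences among $A_\ast$, $A_+$, $A_\times$ at each step. Your route is more direct; the paper's perturbative route buys controlled bookkeeping in which at most one new family of sequences appears per step, which is exactly what tames the incommensurable case that you correctly single out as the main obstacle.

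One point to repair before the bookkeeping closes: you describe both $A_{+}^{\gamma,\alpha,\beta}[0]$ and $A_{\times}^{\gamma,\alpha,\beta}[0]$ as consisting of sequences ``reached only after an $A_+$-predecessor''. By \eqref{eq:definition:Special-A} this is true only of $A_\times$: elements of $A_+$ have grading in $\big(\gamma-(\alpha\vee\beta),\gamma-(\alpha\wedge\beta)\big]$ and \emph{no} proper prefix in $A_+$, i.e.\ they are the minimal sequences entering the critical band, while $A_\times$ collects the sequences in $\big(\gamma-(\alpha\wedge\beta),\gamma\big]$ that \emph{do} admit such a prefix (in practice, extensions of an $A_+$ element by the cheap direction alone). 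Correspondingly, the identity $f^a_\ast=f^a_+ + f^a_\times$ is used at a single $a\in A_+$; the piece $f^a_+$ cannot be expanded further and stays attached to $a$, whereas $f^a_\times$ is expanded only in the cheap direction, its leading terms $\rD^{a'}f$ joining the main sum and its final remainders appearing as $f^{\tilde a}_\times$ for the extensions $\tilde a$ of $a$ lying in $A_\times$. With that correction, and with the factorial weights tracked through the repeated one-directional expansions as in the paper, your induction delivers \eqref{eq:FullTaylorExpansion}--\eqref{eq:theorem:LionsTaylor2_remainder1}, and the estimate \eqref{eq:theorem:LionsTaylor2_Rem1} follows exactly as you describe.
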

	
	The proof of Theorem \ref{theorem:LionsTaylor2} is delayed until later. 
	
	
	\subsubsection*{Differentiability of multivariate Lions derivatives}
	
	A key algebraic property of Taylor expansions is that the derivatives of a function can also be expressed as a Taylor expansion and the jets that correspond to these Taylor expansions of the derivatives are a translation of the original jet in a specific sense. In this section, we replicate this property for the multivariate Lions derivatives of a function. 
	
	When considering the Taylor expansion for the function $\partial_a f$ in the approach developed below, we remark that the free variables $(x_1, ..., x_{m[a]})$ generated by $\partial_a$ are here treated as tagged variables just like the spatial variable $x_0$. This can be seen in Equation \eqref{eq:Grading_bar[a]_given-a} below. 
	
	\begin{definition}
		\label{definition:A_0^n-H}
		Let $k, n\in \bN_0$ and let $H$ be a set that (for the purposes of clarity) does not contain positive integers. 
		
		Let $A_{k, n}[I]$ be the collection of all sequences $a'=(a'_i)_{i=1, ..., k+n}$ of length $k+n$ of the form $a' = \sigma\big( b\cdot c\big)$ where $b = (b_i)_{i=1, ..., k}$ is a sequence of length $k$ taking values in the set $H$, $c = (c_i)_{i=1, ..., n} \in A_n$ and $\sigma$ is a $(k, n)$-shuffle. 
		
		For $n\in \bN$, we denote
		$$
		A_{n}[H] = \bigcup_{k=0}^n A_{k, n-k}[H], 
		\quad
		A^n[H] = \bigcup_{i=0}^n A_{i}[H] 
		\quad \mbox{and} \quad
		A[H] = \bigcup_{n=0}^\infty A_{n}[H]. 
		$$
		
		We denote $m:A[H] \to \bN_0$ by
		$$
		m\big[ a \big] = \max_{\substack{i = 1, ..., |a| \\ a_i \notin H}} |a_i|. 
		$$
		with the convention that $\max_{\emptyset} a_i = 0$. 
	\end{definition}
	
	We can see that when the set $H = \{0\}$, Definition \ref{definition:A_0^n-H} agrees with Definition \ref{definition:A_0^n} which is key.
	
	\begin{example}
		Let us consider a set $H = \{ h_1, h_2, h_3\}$. Then we have
		\begin{align*}
			A_{1,1}[H]=&\Big\{ (h_1, 1), (h_2, 1), (h_3, 1), (1, h_1), (1, h_2), (1, h_3) \Big\}
			\\
			A_{0, 3}[H]=&\Big\{ (1,1,1), (1,1,2), (1,2,1), (1,2,2), (1,2,3) \Big\}
			\\
			A_{2, 0}[H]=&\Big\{  (h_1, h_1), (h_1, h_2), (h_1, h_3), (h_2, h_1), (h_2, h_2), (h_2, h_3), (h_3, h_1), (h_3, h_2), (h_3, h_3) \Big\}
		\end{align*}
	\end{example}
	
	\begin{remark}
		In practice, we are going to be interested in sets of partition sequences $A[H]$ where 
		$$
		H = \big\{ a^{-1}[j]: j=1, ..., m[a] \big\} \equiv \big\{ 1, ..., m[a] \big\}
		$$
		for some fixed choice of partition sequence $a\in A_n$. As we emphasise, there is a canonical identification with the elements of this set and positive integers. However, we emphasise again that the set $H$ does not contain integers. 
	\end{remark}
	
	\begin{definition}
		\label{definition:A_0^n-a}
		Let $a\in A[0]$ and let $n\in \bN_0$. Let $A_{n}[a]$ be the collection of all sequences $\overline{a} = \big( \overline{a}_i \big)_{i=1, ..., n}$ such that 
		$$
		\overline{a}_1 \in \big\{ 0, 1, ..., m[a], m[a] + 1 \big\}, \quad \mbox{and} \quad \overline{a}_i \in \Big\{ 0, 1, ..., m[a], m[a]+1, ..., \max_{l = 1, ..., i-1} \overline{a}_i \Big\}. 
		$$
		For $n\in \bN$, we denote
		$$
		A^n[a] = \bigcup_{i=0}^n A_i[a], \quad A[a] = \bigcup_{i=0}^{\infty} A_i[a]. 
		$$
		Further, we denote $m:A[a] \to \bN_0$ by
		$$
		m\big[ \overline{a} \big]=  \max_{i=1, ..., |\overline{a}| } |a_i| - m[a]
		$$
	\end{definition}
	
	\begin{example}
		We have that
		\begin{align*}
			A_{2}[(1,2,1)] = \Big\{& (0,0), (0,1), (0,2), (0,3), (1,0), (1,1), (1,2), (1,3), 
			\\
			& (2,0), (2,1), (2,2), (2,3), (3,0), (3,1), (3,2), (3,3), (3,4) \Big\}
		\end{align*}
	\end{example}
	
	\begin{lemma}
		\label{lemma:A_0^n-H(and)A_0^n-a}
		There is an isomorphism between the sets 
		$$
		A_n\big[ \{ a^{-1}[j]: j=0, ..., m[a] \} \big] \quad \mbox{and} \quad A_n[a]. 
		$$
		Further, both of these sets are isomorphic to the set
		$$
		\Big\{ \overline{a} \in A_{n+|a|}[0]: \quad \forall i\in \{1, ..., |a|\} \quad \overline{a}_i = a_i \Big\}. 
		$$
	\end{lemma}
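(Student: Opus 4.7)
The plan is to build two explicit bijections: first $\Phi$ from $A_{n}[H]$ to $A_{n}[a]$ where $H = \{a^{-1}[j]:j=0,\ldots,m[a]\}$, and then $\Psi$ from $A_{n}[a]$ to the extension set $E_{n}[a] := \{\overline{a} \in A_{n+|a|}[0] : \overline{a}_{i} = a_{i} \text{ for } i \leq |a|\}$. Composing them simultaneously delivers both isomorphisms in the statement.

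For the first bijection $\Phi$, I would exploit the shuffle decomposition $\overline{b} = \sigma(b \cdot c)$ built into the definition of $A_{k,n-k}[H]$, where $b$ of length $k$ takes values in $H$ and $c \in A_{n-k}$. Set $\Phi(\overline{b})$ to be the sequence obtained by replacing each occurrence of $a^{-1}[j]$ by the integer $j \in \{0,\ldots,m[a]\}$, and shifting every entry of $c$ upward by $m[a]$. The conditions $c_{1}=1$ and that $c$ has a $1$-Lip sup-envelope translate, after the shift, into: the first appearance of a label strictly greater than $m[a]$ is $m[a]+1$, and any subsequent new label exceeds the running maximum by exactly one. This is precisely the defining condition of $A_{n}[a]$ from Definition \ref{definition:A_0^n-a}. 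Injectivity is clear since the map acts entrywise and is invertible once one distinguishes entries in $\{0,\ldots,m[a]\}$ from entries exceeding $m[a]$. For surjectivity, given $\tilde{a} \in A_{n}[a]$, the positions where $\tilde{a}_{i} \in \{0,\ldots,m[a]\}$ recover $b$, the remaining positions (down-shifted by $m[a]$) recover $c \in A_{n-k}$, and the relative order of the two subsequences fixes $\sigma$.

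For the second bijection $\Psi$, I would simply concatenate: $\Psi(\tilde{a}) = (a_{1},\ldots,a_{|a|},\tilde{a}_{1},\ldots,\tilde{a}_{n})$. Verifying $\Psi(\tilde{a}) \in A_{n+|a|}[0]$ reduces, via Definition \ref{definition:A_0^n}, to checking that the positive entries of the concatenation, read in order, form a valid sequence in some $A_{m}$. Since the positive entries of $a$ already form a valid element of $A_{m[a]}$ with maximum $m[a]$, the admissible choices for each $\tilde{a}_{i}$ (namely $0$, or an existing label in $\{1,\ldots,m[a]\}$, or a new label continuing the $1$-Lip sup-envelope, i.e.\ $1 + \max_{l<i}\tilde{a}_{l}$ when that exceeds $m[a]$) are exactly those prescribed by Definition \ref{definition:A_0^n-a}. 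Injectivity is immediate since the prefix is frozen, and surjectivity follows by truncating any extension.

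The main obstacle is bookkeeping: one must simultaneously track which symbols are \emph{tagged} (the $0$-entries, or the $H$-entries in the first formulation), which are \emph{inherited} free-variable labels of $a$, and which are \emph{genuinely new} measure-derivative labels; and then verify that the counters $m[\cdot]$ in each of the three formulations agree under the bijections, each counting the number of new measure-derivative labels introduced beyond those already present in $a$. Once this dictionary is set up carefully, all three sets are seen to parametrise exactly the same data, namely the prescription of $n$ further derivative operations on top of the fixed jet $a$.
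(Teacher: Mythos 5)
Your proposal is correct and follows essentially the same route as the paper: the first bijection is exactly the paper's map $\cI$ (replace each $H$-entry $a^{-1}[j]$ by the integer $j$ and shift the genuinely new labels up by $m[a]$, with the inverse recovering $b$, $c$ and the shuffle $\sigma$), and the second is the paper's concatenation map $\cJ[\overline{a}] = (a\cdot\overline{a})$ with truncation as inverse. The bookkeeping you flag about matching the counters $m[\cdot]$ is precisely what the paper's verification amounts to, so no gap remains.
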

	
	\begin{proof}
		\textit{Step 1.}
		We compare Definition \ref{definition:A_0^n-H} and Definition \ref{definition:A_0^n-a}:  Let $n\in \bN$, let $a\in A_n[0]$ and let
		\begin{equation}
			\label{eq:lemma:A_0^n-H(and)A_0^n-a_2}
			H = \Big\{ a^{-1}[j]: j=0, 1, ..., m[a] \Big\}. 
		\end{equation}
		The set $H$ contains $m[a]+1$ elements where the element $a^{-1}[0]$ may be the empty set. For each $j=1, ..., m[a]$, the sets are distinct and non-empty and $a^{-1}[0]$ does not intersect with any of these sets. In fact, $H$ is not a partition of $\{1, ... |a|\}$ since it may contain $\emptyset$. 
		
		Treating $\emptyset$ as a possible element of $H$, we consider $\tilde{a} \in A_n[H]$. We define the mapping $\cI: A_n[H] \to A_n[a]$ by
		\begin{equation}
			\label{eq:lemma:A_0^n-H(and)A_0^n-a_3}
			\big( \cI[\tilde{a}]_i \big)_{i=1, ..., n}, 
			\quad 
			\cI[\tilde{a}]_i = 
			\begin{cases}
				j \quad & \quad \mbox{if $\tilde{a}_i \in \big\{ a^{-1}[j]: j=0, ..., m[a] \big\}$,}
				\\
				\tilde{a}_i + m[a] \quad & \quad \mbox{if $\tilde{a}_i \in \big\{ 1, ..., m[\tilde{a}] \big\}$}. 
			\end{cases}
		\end{equation}
		Then we get that
		$$
		\cI[\tilde{a}]_1 \in \big\{ 0, 1, ..., m[a], m[a] +1 \big\}
		$$
		and more generally for any $i\in 1, ..., n$, 
		\begin{align*}
			\cI[\tilde{a}]_i \in& \Big\{ 0, 1, ..., m[a], m[a] +1, ..., m[a] + \max_{k=1, ..., i-1} \tilde{a}_k \Big\}
			\\
			=& \Big\{ 0, 1, ..., m[a], m[a] +1, ..., 1+ \max_{k=1, ..., i-1} \cI[\tilde{a}]_k \Big\}
		\end{align*}
		which verifies that $\cI[\tilde{a}] \in A_n[a]$. 
		
		To show that $\cI$ is injective suppose that $\tilde{a}^1, \tilde{a}^2\in A_n[a]$ such that $\cI[\tilde{a}^1] = \cI[\tilde{a}^2]$. Then $|\cI[\tilde{a}^1]| = |\cI[\tilde{a}^2]| = n$. In particular, this means that $\cI[\tilde{a}^1]_1 = \cI[\tilde{a}^2]_1$ and their value is contained in the set $\{0, 1, ..., m[a]+1 \}$. First, suppose that $\cI[\tilde{a}^1]_1 \in \{0, 1, ..., m[a]\}$ in which case $\tilde{a}_1^1 = a^{-1}\big[ \cI[\tilde{a}^1]_1 \big]$. By symmetry, we also have that $\tilde{a}_1^2 = a^{-1}\big[ \cI[\tilde{a}^2]_1 \big] = \tilde{a}_1^1$. On the other hand, if $\cI[\tilde{a}^1]_1 \in \{m[a]+1\}$ then both $\tilde{a}_1^1 = \tilde{a}_1^2 = 1$ so that either way $\tilde{a}_1^1 = \tilde{a}_1^2$. 
		
		More generally, for any $i\in \{1, ..., n\}$ we have that $\cI[\tilde{a}^1]_i = \cI[\tilde{a}^2]_i$. Either $\cI[\tilde{a}^1]_i \in \{0, 1, ..., m[a]\}$ or $\cI[\tilde{a}^1]_i \in \{m[a] + 1, ..., 1+ \max_{k=1, ..., i-1} \cI[\tilde{a}^1]_k \}$. These sets are necessarily disjoint and 
		$$
		\max_{k=1, ..., i-1} \cI[\tilde{a}^1]_k = \max_{k=1, ..., i-1} \cI[\tilde{a}^2]_k
		$$
		so the sets are equal. If $\cI[\tilde{a}^1]_i = \cI[\tilde{a}]_i \in \{0, 1, ..., m[a]\}$ then by construction $\tilde{a}_i^1 = a^{-1}\big[ \cI[\tilde{a}^2]_i \big] = a^{-1}\big[ \cI[\tilde{a}^2]_i \big] = \tilde{a}_i^2$. On the other hand, $\tilde{a}_i^1 = \cI[\tilde{a}^1]_i - m[a] = \cI[\tilde{a}^2]_i - m[a] = \tilde{a}_i^2$ so that we have $\tilde{a}^1 = \tilde{a}^2$ and we conclude that $\cI$ is injective. 
		
		On the other hand, the inverse operator $\cI^{-1}: A_n[a] \to A_n[H]$ defined for $\overline{a} \in A_n[a]$ by
		\begin{equation}
			\label{eq:lemma:A_0^n-H(and)A_0^n-a_1}
			\big( \cI^{-1} [\overline{a}]_i \big)_{i=1, ..., n}, \quad \cI^{-1} [\overline{a}]_i = 
			\begin{cases}
				a^{-1}\big[ a_i \big] \quad& \mbox{if $\overline{a}_i \in \big\{ 0, 1, ..., m[a] \big\}$}
				\\
				\overline{a}_i - m[a] \quad& \mbox{if $\overline{a}_i> m[a]$.}
			\end{cases}
		\end{equation}
		Let $J, J' \subseteq \{1, ..., n\}$ such that $J \cap J' = \emptyset$, $J \cup J' = \{1, ..., n\}$ and
		\begin{align*}
			J =& \big\{ i\in \{1, ..., n\}: \overline{a}_i \in \{0, 1, ..., m[a]\} \big\}, 
			\\
			J' =& \big\{ i\in \{1, ..., n\}: \overline{a}_i > m[a] \big\}. 
		\end{align*}
		We define $p = |J|$ and $q = |J'|$. Then $\exists \sigma \in \Shuf(p, q)$ such that $\overline{a} = \sigma\big( b\cdot c\big)$ where
		\begin{align*}
			\big( b_i \big)_{i\in J}, \quad  b_i = \overline{a}_i
			\\
			\big( c_i \big)_{i \in J'}, \quad c_i = \overline{a}_i
		\end{align*}
		and
		$$
		\cI^{-1}\big[ \overline{a} \big] = \cI^{-1} \Big[ \sigma\big( b \cdot c \big) \Big] = \sigma\Big( \cI^{-1}\big[ b \big] \cdot \cI^{-1}\big[ c \big] \Big). 
		$$
		Thanks to Equation \eqref{eq:lemma:A_0^n-H(and)A_0^n-a_1}, we have that
		\begin{align*}
			\cI^{-1}\big[ b \big] =& \big( a^{-1} [ \overline{a}_j ] \big)_{j\in J} \quad \mbox{and}
			\\
			\cI^{-1}\big[ c \big] =& \big( \overline{a}_j \big)_{j\in J'}. 
		\end{align*}
		Hence, the sequence $\cI^{-1}\big[ b \big]$ takes its values in $H$ (see Equation \eqref{eq:lemma:A_0^n-H(and)A_0^n-a_3}). On the other hand, 
		\begin{align*}
			\cI^{-1}\big[ c \big]_1 =& 1
			\\
			\cI^{-1}\big[ c \big]_i \in& \Big\{ 1, ..., 1+ \max_{\substack{j \in J' \\ j < i}} \cI^{-1}\big[ \overline{a} \big]_j \Big\}
		\end{align*}
		so that $\cI^{-1}\big[ c \big] \in A_q$. As such we conclude that
		$$
		\cI^{-1}\big[ \overline{a} \big] \in A_{p, q}[ H] \subseteq A_{n}. 
		$$
		
		Finally, we can easily verify from Equation \eqref{eq:lemma:A_0^n-H(and)A_0^n-a_3} and \eqref{eq:lemma:A_0^n-H(and)A_0^n-a_1} that $\cI \circ \cI^{-1}$ and $\cI^{-1} \circ \cI$ are the respective identity operators on $A_n[a]$ and $A_n[H]$ respectively. 
		
		\textit{Step 2.}
		Next, we denote
		$$
		\tilde{A}_n[a] = \Big\{ \overline{a} \in A_{n+|a|}[0]: \quad \forall i\in \{1, ..., |a|\} \quad \overline{a}_i = a_i \Big\}.
		$$
		and consider the mapping $\cJ: A_n[a] \to \tilde{A}_n[a]$ by
		$$
		\cJ\big[ \overline{a} \big] = \big( a\cdot \overline{a} \big). 
		$$
		Then
		$$
		\cJ\big[ \overline{a} \big] = 
		\begin{cases}
			a_i \quad&\quad \mbox{for $i=1, ..., |a|$,}
			\\
			\overline{a}_{i - |a|} \quad&\quad \mbox{for $i=1 + |a|, ..., |\overline{a}| + |a|$. }
		\end{cases}
		$$
		Then $\cJ[\overline{a}] \in A_{n+|a|}[0]$ and $\big( \cJ[a]_i \big)_{i=1, ..., |a|} = a$ so that $\cJ[\overline{a}] \in \tilde{A}_n[a]$. It is easy to verify that $\cJ$ is injective. 
		
		On the other hand, the inverse operator $\cJ^{-1}: \tilde{A}_{n}[a] \to A_n[a]$ satisfies
		$$
		\big( \cJ^{-1} [ \hat{a}] \big)_{i=1, ..., n} = \big( \hat{a}_{i+|a| } \big)_{i=1, ..., n} 
		$$
		so that
		\begin{align*}
			\cJ^{-1}[\hat{a}]_1 = \hat{a}_{|a|+1} \in& \Big\{ 0, 1, ..., 1 + \max_{i=1, ..., |a|} \hat{a}_i \Big\}
			\quad \mbox{and} \quad
			\cJ^{-1}[\hat{a}]_i = \hat{a}_{|a|+i} \in& \Big\{ 0, 1, ..., 1 + \max_{\substack{ k:\\ k \leq i+|a| }} \hat{a}_k \Big\}. 
		\end{align*}
		Equivalently, 
		$$
		\cJ^{-1}[\hat{a}]_i \in \Big\{ 0, 1, ..., m[a], ..., 1 + \max_{\substack{k:\\k< i}} \cJ^{-1}[\hat{a}]_k \Big\}
		$$
		so that $\cJ^{-1}[\hat{a}] \in A_n[a]$. 
		
		To prove injectivity of $\cJ^{-1}$, let $\hat{a}^1, \hat{a}^2 \in \tilde{A}_n[a]$ and suppose that $\cJ^{-1}[\hat{a}^1] = \cJ^{-1}[\hat{a}^2]$. Then $\forall i \in \{1, ..., n\}$, $\cJ^{-1}[\hat{a}^1]_i = \cJ^{-1}[\hat{a}^2]_i$ so that $\hat{a}_i^1 = \hat{a}_i^2$ for any $i\in \{|a|+1, ..., |a| + n\}$. On the other hand, since $\hat{a}^1, \hat{a}^2\in \tilde{A}_n[a]$, we also have that $\forall i\in \{1, ..., n\}$, $\hat{a}_i^1 = \hat{a}_i^2$. Hence $\hat{a}^1 = \hat{a}^2$ and $\cJ^{-1}$ is injective. Thus we conclude that $\cJ$ is a bijection. 
	\end{proof}
	
	In the next lemma, we provide another interpretation of the set $\A{n}$, as we prove it to be in bijection with the set of partitions $\scP \big( \{0,1, ..., n\}\big)$ of $\{0,1, ..., n\}$. It turns out that the set of partitions is key to understanding the Lions derivative. Intuitively, the partition will indicate how the free variables generated by the iterated Lions derivatives will interact with each other, thus providing us with information on which probability space the Lions derivatives should be considered. 
	
	\begin{lemma}
		\label{Lemma:Bijection-Partition}
		For $n\in \bN$ and set $H$, there exists a bijection between the set 
		$$
		A_n[H] \quad\mbox{and}\quad \scP\big( H \cup \{1, ..., n\}\big).
		$$ 
	\end{lemma}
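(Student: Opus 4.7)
The plan is to construct an explicit bijection $\mathfrak{m}: A_n[H] \to \scP\bigl(H \cup \{1, \ldots, n\}\bigr)$ generalising the one built in Lemma \ref{Lemma:Bijection-Partition-simple}. Given $a \in A_n[H]$ with canonical factorisation $a = \sigma(b \cdot c)$, where $b$ is $H$-valued, $c \in A_{n-k}$, and $\sigma$ is a $(k, n-k)$-shuffle, I would set
\[
\mathfrak{m}(a) := \bigl\{ \{h\} \cup a^{-1}[h] : h \in H \bigr\} \,\cup\, \bigl\{ a^{-1}[j] : j = 1, \ldots, m[a] \bigr\}.
\]
This is a partition of $H \cup \{1, \ldots, n\}$: each $h \in H$ lies in the single block $\{h\} \cup a^{-1}[h]$, each index $i \in \{1, \ldots, n\}$ lies in the unique block determined by $a_i$, and the blocks $a^{-1}[j]$ for $j = 1, \ldots, m[a]$ are non-empty since $c$ is surjective onto $\{1, \ldots, m[c]\}$ by the defining property of $A_{n-k}$.

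For injectivity, I would follow the inductive argument of Lemma \ref{Lemma:Bijection-Partition-simple}. Suppose $\mathfrak{m}(a) = \mathfrak{m}(a')$. For each $h \in H$, the block of $\mathfrak{m}(a)$ containing $h$ is uniquely determined, and therefore the set of indices $i$ with $a_i = h$ agrees with the set where $a'_i = h$; this fixes the $H$-valued entries of both sequences. For the remaining integer-valued positions, the least index $i$ not yet assigned to an $H$-block must carry value $1$ in both $a$ and $a'$ (because $c_1 = 1$ in the $A_{n-k}$-factor), and the induction on positive labels from the proof of Lemma \ref{Lemma:Bijection-Partition-simple} then carries over verbatim to show $a = a'$.

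For surjectivity, given $P \in \scP\bigl(H \cup \{1, \ldots, n\}\bigr)$ (understood so that distinct elements of $H$ lie in distinct blocks, this being the setting in which the map is genuinely invertible), I would construct the preimage as follows: for each $i \in \{1, \ldots, n\}$, if the block of $P$ containing $i$ also contains some $h \in H$, set $a_i = h$; otherwise, enumerate the blocks of $P$ that contain no element of $H$ in increasing order of their minimum element, and set $a_i$ equal to the index of the block containing $i$ under this enumeration. Checking that $a \in A_n[H]$ reduces to checking that the integer-valued subsequence lies in $A_{n-k}$, which follows because the ``order by minimum'' rule ensures a new positive label appears precisely when its block is encountered for the first time along $\{1, \ldots, n\}$. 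The main obstacle is purely bookkeeping: identifying the factorisation $a = \sigma(b \cdot c)$ associated with the constructed sequence and verifying that this construction inverts $\mathfrak{m}$. The substantive content of the argument is identical to that of Lemma \ref{Lemma:Bijection-Partition-simple}, since once the $H$-valued positions are stripped off, one is left with a problem about sequences in $A_{n-k}$ restricted to an ordered subset of $\{1, \ldots, n\}$.
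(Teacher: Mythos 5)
Your construction is exactly the adaptation of Lemma \ref{Lemma:Bijection-Partition-simple} that the paper has in mind (its proof is simply ``left to the reader''), and the argument you give — label-blocks $\{h\}\cup a^{-1}[h]$ for $h\in H$ plus the blocks $a^{-1}[j]$, injectivity by the same induction on positive labels, surjectivity by the ``enumerate unlabelled blocks by their minima'' rule — is correct. One remark: your parenthetical restriction of the codomain to partitions in which distinct elements of $H$ lie in distinct blocks is not merely a convenience but is genuinely needed, since for $|H|\geq 2$ the lemma as literally stated fails on cardinality grounds (e.g.\ $\big|A_1[\{h_1,h_2\}]\big|=3$ while $\scP\big(\{h_1,h_2,1\}\big)$ has $5$ elements); for $|H|=1$, in particular $H=\{0\}$, the restriction is vacuous and your argument yields the statement exactly as written.
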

	
	In particular, this means that there is a bijection between the sets $A_n[0]$ and the set of partitions $\scP\big( \{0, 1, ..., n\}\big)$ which has important implications for the construction of Lions trees. Intuitively, the partition associated with an element $a \in A_{n}[0]$ is obtained by gathering (in a common element of the partition) the indices $i$ of $\{0, ..., n\}$ that have the same value $a_{i}$ in the sequence $a$, with the convention that $a_{0}=0$. 
	
	\begin{proof}
		This proof is just an adaption of Lemma \ref{Lemma:Bijection-Partition-simple} and is left to the reader. 
	\end{proof}
	
	
	Motivated by the equivalence between Definitions \ref{definition:A_0^n-H} and \ref{definition:A_0^n-a}, we are interested in finding a Lions-Taylor expansions for the function
	$$
	\partial_a f: \bR^e \times \cP_2(\bR^e) \times (\bR^e)^{\times m[a]} \to \lin\big( (\bR^e)^{\otimes |a|}, \bR^d \big)
	$$
	To do this, we introduce a grading equivalent to that of Definition \ref{definition:Special-A}:
	\begin{definition}
		Let $\overline{a}\in A_n[a]$, let $\alpha, \beta>0$ and let $\gamma> \alpha\wedge \beta$. We define $\scG_{\alpha, \beta}^a: A_n[a] \to \bR^+$ by
		\begin{equation}
			\label{eq:Grading_bar[a]_given-a}
			\scG_{\alpha, \beta}^a\big[ \overline{a} \big] = \alpha\cdot \Big| \Big\{ \overline{a}^{-1}\big[ \{ 0, 1, ..., m[a] \} \big] \Big\} \Big| + \beta \cdot \Big| \Big\{ \overline{a}^{-1} \big[ \{m[a]+1, ..., m[a] + m[\overline{a}] \} \big] \Big\} \Big|. 
		\end{equation}
		Finally, let
		$$
		A^{\gamma, \alpha, \beta}[a]:=\Big\{ \overline{a}\in A[a]: \scG_{\alpha, \beta}^a[\overline{a}] \leq \gamma \Big\} 
		$$
		and the remainder sets
		\begin{equation}
			\left.
			\begin{aligned}
				A_{+}^{\gamma, \alpha, \beta}[a]:=\Big\{ \overline{a} \in A[a]:\quad & \scG_{\alpha, \beta}^a[\overline{a}] \in \big( \gamma - (\alpha\vee\beta), \gamma - (\alpha\wedge\beta)\big] , 
				\\
				&\quad \forall k=1, ..., |\overline{a}|, \quad (\overline{a}_i)_{i=1, ..., |\overline{a}|-k} \notin A_+^{\gamma, \alpha, \beta}[a] \Big\}. 
				\\
				A_{\ast}^{\gamma, \alpha, \beta}[a]:=\Big\{ \overline{a} \in A[a]:\quad & \scG_{\alpha, \beta}^a[\overline{a}] \in \big( \gamma - (\alpha\wedge\beta), \gamma \big] , 
				\\
				&\quad \forall k=1, ..., |\overline{a}|, \quad (\overline{a}_i)_{i=1, ..., |\overline{a}|-k} \notin A_+^{\gamma, \alpha, \beta}[a] \Big\}. 
				\\
				A_{\times}^{\gamma, \alpha, \beta}[a]:=\Big\{ \overline{a} \in A[a]:\quad & \scG_{\alpha, \beta}^a[\overline{a}] \in \big( \gamma - (\alpha\wedge\beta), \gamma \big], 
				\\
				&\quad \forall k=1, ..., |\overline{a}|, \quad (\overline{a}_i)_{i=1, ..., |\overline{a}|-k} \in A_+^{\gamma, \alpha, \beta}[a] \Big\}. 
			\end{aligned}
			\right\}
		\end{equation}
	\end{definition}
	
	\begin{remark}
		\label{remark:Link-G+G^a}
		We emphasise that this grading is different from the grading $\scG_{\alpha, \beta}$ is that for any $\overline{a} \in A_n[a]$, there exists $(a \cdot \overline{a} ) \in A_{n + |a|}[0]$ but 
		$$
		\scG_{\alpha, \beta}\big[ (a\cdot \overline{a}) \big] \neq \scG_{\alpha, \beta}[a] + \scG_{\alpha, \beta}^a[\overline{a}]. 
		$$
		What this technical detail is capturing is that when we consider the Lions-Taylor expansions of some function of the form $\partial_a f:\bR^e \times \cP_2(\bR^e) \times (\bR^e)^{\times m[a]} \to \bR^e$, each of the free variables generated by the application of $\partial_a$ are treated as tagged variables when we apply $\partial_{\overline{a}}$
	\end{remark}
	
	Let $a\in A_{n}[0]$ and let $f:\bR^e \times \cP_2(\bR^e) \times (\bR^e)^{\times m[a]} \to \bR^d$. Let $x_0, y_0, x_1, y_1, ..., x_{m[a]}, y_{m[a]} \in \bR^e$ and $\mu , \nu \in \cP_2(\bR^e)$ with joint distribution $\Pi^{\mu, \nu}$. For $\overline{a} \in A_n[a]$, we define the operator
	\begin{align}
		\nonumber
		&\rD^{\overline{a}} f(x_0, \mu, \boldsymbol{x}_{m\{a\}} )\big[ y_0-x_0, \Pi^{\mu, \nu},  (\boldsymbol{y}-\boldsymbol{x})_{m\{a\}} \big]
		\\
		&= \underbrace{\int_{(\bR^e)^{\oplus 2}} ... \int_{(\bR^e)^{\oplus 2}}}_{\times m[\overline{a}]} \partial_{\overline{a}} f\Big( x_0, \mu, \boldsymbol{x}_{m\{a\}}, \boldsymbol{x}_{m\{ \overline{a} \} }\Big) 
		\cdot 
		\bigotimes_{i=1}^{|\overline{a}|} (y_{\overline{a}_i} - x_{\overline{a}_i}) \cdot d\big( \Pi^{\mu, \nu} \Big)^{\times m[\overline{a}]} \Big( (\boldsymbol{x}, \boldsymbol{y})_{m\{\overline{a}\} } \Big)
	\end{align}
	where
	\begin{align*}
		&d\big( \Pi^{\mu, \nu}\big)^{\times m[\overline{a}]}\Big( (\boldsymbol{x}, \boldsymbol{y})_{m\{\overline{a}\} } \Big) = d\Pi^{\mu, \nu}(x_{m[a]+1}, y_{m[a]+1}) \times ... \times d\Pi^{\mu, \nu}(x_{m[a]+m[\overline{a}]}, y_{m[a]+m[\overline{a}]}), 
		\\
		&\boldsymbol{x}_{m\{a\}} = (x_1, ..., x_{m[a]}) \quad \mbox{and}\quad \boldsymbol{x}_{m\{\overline{a}\}} = (x_{m[a]+1}, ..., x_{m[a] + m[\overline{a}]}), 
		\\
		&(\boldsymbol{y}-\boldsymbol{x})_{m\{a\}} = \big( y_1 - x_1, ..., y_{m[a]} - x_{m[a]} \big). 
	\end{align*}
	
	Theorem \ref{theorem:LionsTaylor2} admits the following Corollary:
	\begin{corollary}
		\label{corollary:LionsTaylor2}
		Let $\alpha, \beta>0$ and let $\gamma>\alpha\wedge\beta$ and denote $n:=\big\lfloor \tfrac{\gamma}{\beta} \big\rfloor$. Let $f: \bR^e \times \cP_2(\bR^e) \to \bR^d$ such that $f\in C_b[A^{\gamma, \alpha, \beta}]\big( \bR^e \times \cP_2(\bR^e); \bR^d\big)$. Let $x_0, y_0, x_1, y_1, ..., x_{m[a]}, y_{m[a]} \in \bR^e$ and let $\mu,\nu\in \cP_{n+1}(\bR^e)$ with joint distribution $\Pi^{\mu, \nu}$. Let $\boldsymbol{x}_{m\{a\}} = \big( x_1, ..., x_{m[a]}\big)$ and $\boldsymbol{y} = \big( y_1, ..., y_{m[a]} \big)$. 
		
		Let $a\in A^{\gamma, \alpha, \beta}$ and let $\eta = \gamma - \scG_{\alpha, \beta}[a]$. Then we have that
		\begin{align}
			\nonumber
			\partial_a f\Big( y_0, \nu, \boldsymbol{y}_{m\{a\}}\Big)
			=& 
			\sum_{\overline{a}\in A[a]}^{\eta, \alpha, \beta} \frac{1}{|\overline{a}|!}\cdot \rD^{\overline{a}} \partial_a f\Big( x_0, \mu, \boldsymbol{x}_{m\{a\}} \Big) \Big[ y_0 - x_0, \Pi^{\mu, \nu}, (\boldsymbol{y} - \boldsymbol{x})_{m\{a\}} \Big]
			\\
			&+
			R_{\eta, \alpha, \beta|a}\Big( \partial_a f\Big)\Big[ (x_0, y_0), \Pi^{\mu, \nu}, (\boldsymbol{y} - \boldsymbol{x})_{m\{a\}} \Big]
		\end{align}
		where
		\begin{align}
			\nonumber
			&R_{\eta, \alpha, \beta|a}\Big( \partial_a f\Big)\Big[ (x_0, y_0), \Pi^{\mu, \nu}, (\boldsymbol{y} - \boldsymbol{x})_{m\{a\}} \Big]
			\\
			\nonumber
			&= 
			\sum_{\overline{a} \in A_{\ast}[a]}^{\eta, \alpha, \beta} \tfrac{1}{(|\overline{a}| - 1)!} \underbrace{ \int_{(\bR^e)^{\oplus 2}} ... \int_{(\bR^e)^{\oplus 2}} }_{\times m[\overline{a}]} f_{\ast}^{\overline{a}}
			\cdot 
			\bigotimes_{p=1}^{|\overline{a}|} ( y_{a_p} - x_{a_p}) \cdot d\big( \Pi^{\mu, \nu}\big)^{\times m[\overline{a}]} \Big( (\boldsymbol{x}, \boldsymbol{y})_{m[\overline{a}]}\Big)
			\\
			\nonumber
			&\quad + \sum_{\overline{a} \in A_{+}[a]}^{\eta, \alpha, \beta} \tfrac{1}{(|\overline{a}| - 1)!} \underbrace{ \int_{(\bR^e)^{\oplus 2}} ... \int_{(\bR^e)^{\oplus 2}} }_{\times m[\overline{a}]} f_{+}^{\overline{a}}
			\cdot 
			\bigotimes_{p=1}^{|\overline{a}|} ( y_{a_p} - x_{a_p}) \cdot d\big( \Pi^{\mu, \nu}\big)^{\times m[\overline{a}]} \Big( (\boldsymbol{x}, \boldsymbol{y})_{m[\overline{a}]}\Big)
			\\
			\label{eq:corollary:LionsTaylor3:remUpper_Y}
			&\quad + \sum_{\overline{a} \in A_{\times}[a]}^{\eta, \alpha, \beta} \tfrac{1}{(|\overline{a}| - 1)!} \underbrace{ \int_{(\bR^e)^{\oplus 2}} ... \int_{(\bR^e)^{\oplus 2}} }_{\times m[\overline{a}]} f_{\times}^{\overline{a}}
			\cdot 
			\bigotimes_{p=1}^{|\overline{a}|} ( y_{a_p} - x_{a_p}) \cdot d\big( \Pi^{\mu, \nu}\big)^{\times m[\overline{a}]} \Big( (\boldsymbol{x}, \boldsymbol{y})_{m[\overline{a}]}\Big). 
		\end{align}
		For compactness, we denote $x_i^{\xi} = x_i + \xi\big( y_i - x_i \big)$ and define
		\begin{align*}
			f_{\ast}^{\overline{a}} 
			&= 
			\int_0^1 \bigg( \partial_{(a\cdot \overline{a})} f \Big(x_0^{\xi}, \Pi^{\mu, \nu}_\xi, \boldsymbol{x}_{m\{a\}}^{\xi}, \boldsymbol{x}_{m\{ \overline{a}\} }^{\xi} \Big) 
			- 
			\partial_{(a\cdot \overline{a})} f \Big(x_0, \mu, \boldsymbol{x}_{m\{a\}}, \boldsymbol{x}_{m\{ \overline{a}\}} \Big) \bigg) \cdot (1-\xi)^{|\overline{a}|-1} d\xi, 
		\end{align*}
		\begin{align*}
			f_{+}^{\overline{a}}
			&=\left\{ 
			\begin{aligned}
				&\left.
				\begin{aligned}
					\int_0^1& \bigg( \partial_{(a\cdot \overline{a})} f \Big(x_0^{\xi}, \Pi^{\mu, \nu}_\xi, \boldsymbol{x}_{m\{a\}}^{\xi}, \boldsymbol{x}_{m\{a\}}^{\xi} \Big) 
					\\
					&- \partial_{(a\cdot \overline{a})} f \Big(x_0^{\xi}, \mu, \boldsymbol{x}_{m\{a\}}^{\xi}, \boldsymbol{x}_{m\{\overline{a}\}} \Big)\bigg) \cdot (1-\xi)^{|\overline{a}|-1} d\xi
				\end{aligned} \right\} \mbox{ if $\beta>\alpha$,}
				\\
				&\left.
				\begin{aligned}
					\int_0^1& \bigg( \partial_{(a\cdot \overline{a})} f \Big(x_0^{\xi}, \Pi^{\mu, \nu}_\xi, \boldsymbol{x}_{m\{a\}}^{\xi}, \boldsymbol{x}_{m\{\overline{a}\}}^{\xi} \Big) 
					\\
					&- \partial_{(a\cdot \overline{a})} f \Big(x_0, \Pi^{\mu, \nu}_\xi, \boldsymbol{x}_{m\{a\}}, \boldsymbol{x}_{m\{\overline{a}\}}^{\xi} \Big)\bigg) \cdot (1-\xi)^{|\overline{a}|-1} d\xi,
				\end{aligned} \right\} \mbox{ if $\beta<\alpha$,}
				\\
				&0 \quad \mbox{ if $\beta=\alpha$.}
			\end{aligned}\right.
		\end{align*}
		and
		\begin{align*}
			f_{\times}^{\overline{a}}
			&=\left\{ 
			\begin{aligned}
				&\left.
				\begin{aligned}
					\int_0^1& \bigg( \partial_{(a\cdot \overline{a})} f \Big(x_0^{\xi}, \mu, \boldsymbol{x}_{m\{a\}}^{\xi}, \boldsymbol{x}_{m\{\overline{a}\}} \Big) 
					\\
					&- \partial_{(a\cdot \overline{a})} f \Big(x_0, \mu, \boldsymbol{x}_{m\{a\}}, \boldsymbol{x}_{m\{\overline{a}\}} \Big)\bigg) \cdot (1-\xi)^{|\overline{a}|-1} d\xi
				\end{aligned} \right\} \mbox{ if $\beta>\alpha$,}
				\\
				&\left.
				\begin{aligned}
					\int_0^1& \bigg( \partial_{(a\cdot \overline{a})} f \Big(x_0, \Pi^{\mu, \nu}_\xi, \boldsymbol{x}_{m\{a\}}, \boldsymbol{x}_{m\{\overline{a}\}}^{\xi} \Big) 
					\\
					&- \partial_{(a\cdot \overline{a})} f \Big(x_0, \mu, \boldsymbol{x}_{m\{a\}}, \boldsymbol{x}_{m\{\overline{a}\}} \Big)\bigg) \cdot (1-\xi)^{|\overline{a}|-1} d\xi,
				\end{aligned} \right\} \mbox{ if $\beta<\alpha$,}
				\\
				&0 \quad \mbox{ if $\beta=\alpha$.}
			\end{aligned}\right.
		\end{align*}
	\end{corollary}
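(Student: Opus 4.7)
The strategy is to reduce the corollary to Theorem \ref{theorem:LionsTaylor2} applied to $\partial_a f$, reinterpreted as a functional on a Euclidean space of dimension $(m[a]+1) \cdot e$ and the Wasserstein space $\cP_2(\bR^e)$. Indeed, by Definition \ref{def:general:Lions-spatial:derivative},
$$
\partial_a f : (\bR^e)^{\times(m[a]+1)} \times \cP_2(\bR^e) \to \lin\bigl( (\bR^e)^{\otimes |a|}, \bR^d \bigr),
$$
and this function inherits the regularity encoded by the class $C_b[A^{\eta,\alpha,\beta}[0]]$, but now with $m[a]+1$ tagged variables $(x_0, x_1, \ldots, x_{m[a]})$ rather than a single one. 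The whole point of Remark \ref{remark:Link-G+G^a} and Definition \ref{definition:A_0^n-a} is precisely that the free variables $(x_1, \ldots, x_{m[a]})$ produced by applying $\partial_a$ behave, as far as further differentiation and Taylor expansion are concerned, exactly like the tagged Euclidean variable $x_0$.

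The first step is therefore to upgrade Theorem \ref{theorem:LionsTaylor2} into a mildly more general statement in which the function depends on several Euclidean arguments instead of one. The key observation is that the proof of Theorem \ref{theorem:LionsTaylor2}, once inspected, uses only the product structure between the spatial and measure arguments together with an induction on the total order of differentiation; adding further tagged coordinates amounts to enlarging the set $\{0\}$ of tagged indices to a set $H = \{a^{-1}[j]: j = 0, \ldots, m[a]\}$. By Lemma \ref{lemma:A_0^n-H(and)A_0^n-a}, the resulting index set of partition sequences $A_n[H]$ is canonically isomorphic to $A_n[a]$, and the bijection $\cI$ constructed there sends a sequence $\tilde{a} \in A_n[H]$ to its encoded version $\overline{a} = \cI[\tilde{a}] \in A_n[a]$, where values in $\{0, 1, \ldots, m[a]\}$ record differentiations in one of the pre-existing tagged variables and values strictly larger than $m[a]$ encode newly introduced measure-derived free variables.

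Once this multi-tagged extension of Theorem \ref{theorem:LionsTaylor2} is in hand, the second step is purely cosmetic: translate the differential operators $\rD^{\tilde{a}}$ coming from the extended theorem along the bijection $\cI$ into the operators $\rD^{\overline{a}}$ defined just before the statement, and identify the error term. The grading $\scG_{\alpha, \beta}^a$ introduced in Equation \eqref{eq:Grading_bar[a]_given-a} is by construction the image of the standard grading $\scG_{\alpha, \beta}$ under this identification: each occurrence of an index in $\{0, 1, \ldots, m[a]\}$ contributes $\alpha$ (a tagged derivative) and each occurrence of an index in $\{m[a]+1, \ldots, m[a] + m[\overline{a}]\}$ contributes $\beta$ (a fresh measure derivative). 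Accordingly, the truncation set $A^{\eta, \alpha, \beta}[a]$ with threshold $\eta = \gamma - \scG_{\alpha,\beta}[a]$ is the image under $\cI$ of the truncation set that the extended Theorem \ref{theorem:LionsTaylor2} would produce when applied to $\partial_a f$. The remainder sets $A_{\ast}$, $A_{+}$ and $A_{\times}$ pull back in the same way, and the three integrated differences $f_{\ast}^{\overline{a}}$, $f_{+}^{\overline{a}}$, $f_{\times}^{\overline{a}}$ are exactly the translated analogues of the three pieces in \eqref{eq:theorem:LionsTaylor2_remainder1}, with the Euclidean difference $y_0 - x_0$ in the original theorem replaced by the full vector $(y_0 - x_0, \boldsymbol{y}_{m\{a\}} - \boldsymbol{x}_{m\{a\}})$ and with the new measure-generated free variables labelled $m[a]+1, \ldots, m[a]+m[\overline{a}]$.

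The main (and essentially only) obstacle is the bookkeeping in the above translation: one has to verify carefully that, when $\cI$ is applied, each tagged index in $\{0, 1, \ldots, m[a]\}$ is paired with the corresponding Euclidean increment $y_j - x_j$, while each index exceeding $m[a]$ is paired with an integration against $\Pi^{\mu,\nu}$; and that the three subsets $A_{\ast}$, $A_{+}$, $A_{\times}$ are invariant under $\cI$ in the appropriate sense, so that the same case distinction on $\alpha \gtrless \beta$ governs the form of $f_{+}^{\overline{a}}$ and $f_{\times}^{\overline{a}}$. Once this identification is made, the inequality analogous to \eqref{eq:theorem:LionsTaylor2_Rem1} follows directly from Theorem \ref{theorem:LionsTaylor2} by H\"older's inequality, exactly as in that proof. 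No further induction is required.
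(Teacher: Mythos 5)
Your proposal is correct and matches the paper's own argument, which likewise disposes of the corollary by rerunning the proof of Theorem \ref{theorem:LionsTaylor2} with the free variables $(x_1,\dots,x_{m[a]})$ of $\partial_a f$ treated as additional tagged variables, graded by $\scG^a_{\alpha,\beta}$ as in Remark \ref{remark:Link-G+G^a}. In fact you spell out the index bookkeeping (via Lemma \ref{lemma:A_0^n-H(and)A_0^n-a} and the bijection $\cI$) in more detail than the paper, which states this in a single sentence.
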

	
	\begin{proof}
		The proof of Corollary \ref{corollary:LionsTaylor2} is just an extension of the proof of Theorem \ref{theorem:LionsTaylor2} that treats each of the free variables of $\partial_a f$ as tagged variables, see also Remark \ref{remark:Link-G+G^a}. 
	\end{proof}
	
	\subsection{Schwarz Theorem for Multivariate Lions derivatives}

	What this representation does not convey is that many of the derivatives $\partial_a f$ within the expansion such as Equation \eqref{eq:FullTaylorExpansion} have symmetry properties. The \emph{equality of mixed partials} refers to the phenomena that, under adequate smoothness conditions, the order of partial derivatives is interchangeable. For instance, suppose that $U, V$ are vector spaces and $f: U \to V$ has continuous $n^{th}$ order derivatives. Then for any permutation $\sigma \in \Shuf(n)$, we have that
	\begin{equation}
		\label{eq:shuffle_FrechetDeriv}
		D^n f \cdot \bigotimes_{i=1}^n h_i = D^n f \cdot \bigotimes_{i=1}^n h_{\sigma(i)}, 
	\end{equation}
	see for instance \cite{Rodney2012Calculus}. In \cite{CarmonaDelarue2017book1}, this result was applied to prove the symmetric properties of Lions derivatives. In particular, they concluded that
	\begin{equation}
		\label{eq:CD-Schwarz-1}
		\partial_{(1,1)} f\Big( \mu, x \Big) = \partial_{(1,1)} f \Big( \mu, x \Big)^T, 
		\quad
		\partial_{(1,2)} f \Big( \mu, x_1, x_2 \Big) = \partial_{(1,2)} f\Big( \mu, x_2, x_1 \Big)^{T}
	\end{equation}
	where $A^T$ is the trace of the matrix $A$. 
	
	The goal of this section is to describe the equalities between the derivatives $\partial_a f$ for each $a\in A[0]$. Firstly, we include a result that is a more formal statement of \cite{CarmonaDelarue2017book2}*{Remark 4.16}
	
	\begin{proposition}
		\label{proposition:Schwartz-1}
		Let $f: \bR^e \times \cP_2(\bR^e) \to \bR^d$ and let $F: \bR^e \times L^2(\Omega, \bP; \bR^e) \to \bR^d$ be the canonical lift of $f$. Suppose that $F$ is twice Fr\'echet differentiable as a function on $\bR^e \times L^2(\Omega, \bP; \bR^e)$ with continuous second derivative. Then $\forall y \in \supp(\mu)$
		\begin{equation}
			\label{eq:proposition:Schwartz-1}
			\partial_{(0,1)} f\Big( x, \mu, y \Big) = \partial_{(1,0)} f \Big(x, \mu, y \Big)^T 
		\end{equation}
	\end{proposition}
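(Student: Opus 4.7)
The plan is to deduce \eqref{eq:proposition:Schwartz-1} from the classical Schwarz symmetry of the second Fr\'echet derivative of the lift $F$ on the Banach space $\bR^e \oplus L^2(\Omega, \bP; \bR^e)$. By hypothesis $F$ is twice Fr\'echet differentiable with $D^2 F$ continuous, so the Fr\'echet-Schwarz theorem (equation \eqref{eq:shuffle_FrechetDeriv}) gives $D^2 F(x_0, X)[u_1 \otimes u_2] = D^2 F(x_0, X)[u_2 \otimes u_1]$ for all $u_1, u_2 \in \bR^e \oplus L^2(\Omega, \bP; \bR^e)$.

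First I would record the form of the first derivative, taken from Example \ref{example:Simple_Lions3}: for $(v, h) \in \bR^e \oplus L^2(\Omega, \bP; \bR^e)$,
\begin{equation*}
DF(x_0, X)\bigl[(v, h)\bigr] = \nabla_{x_0} f(x_0, \mu) \cdot v + \bE\bigl[ \partial_\mu f(x_0, \mu, X) \cdot h \bigr], \qquad \mu = \bP \circ X^{-1}.
\end{equation*}
Evaluating $D^2 F$ on the two pairs $\bigl((v, 0), (0, h')\bigr)$ and $\bigl((0, h'), (v, 0)\bigr)$ amounts to differentiating one of the two summands above in the remaining variable. In agreement with the convention preceding Definition \ref{def:general:Lions-spatial:derivative} (which records the order of slots as the order in which the derivative operators are applied), this produces
\begin{align*}
D^2 F(x_0, X)\bigl[(v, 0) \otimes (0, h')\bigr] &= \bE\bigl[ \partial_{(0,1)} f(x_0, \mu, X) \cdot (v \otimes h') \bigr], \\
D^2 F(x_0, X)\bigl[(0, h') \otimes (v, 0)\bigr] &= \bE\bigl[ \partial_{(1,0)} f(x_0, \mu, X) \cdot (h' \otimes v) \bigr].
\end{align*}
Fr\'echet-Schwarz forces these two expressions to agree, and via the slot-swap transpose on $\lin((\bR^e)^{\otimes 2}, \bR^d)$ the second one may be rewritten as $\bE\bigl[ \partial_{(1,0)} f(x_0, \mu, X)^T \cdot (v \otimes h') \bigr]$.

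To extract the pointwise statement I would exploit the $\sigma(X)$-measurability of $\partial_{(0,1)} f(x_0, \mu, X)$ and $\partial_{(1,0)} f(x_0, \mu, X)$. By the tower property the resulting identity
\begin{equation*}
\bE\Bigl[ \bigl( \partial_{(0,1)} f(x_0, \mu, X) - \partial_{(1,0)} f(x_0, \mu, X)^T \bigr) \cdot (v \otimes h') \Bigr] = 0
\end{equation*}
depends on $h'$ only through $\bE[h' \mid X]$, so we may restrict to $h' = \phi(X)$ for arbitrary bounded measurable $\phi: \bR^e \to \bR^e$. The expectation then becomes an integral against $\mu$, and ranging over $v \in \bR^e$ and all such $\phi$ forces $\partial_{(0,1)} f(x_0, \mu, y) = \partial_{(1,0)} f(x_0, \mu, y)^T$ for $\mu$-almost every $y$; continuity of the Lions derivatives (inherited from the continuity of $D^2 F$) then upgrades this to the pointwise identity on $\supp(\mu)$.

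The main obstacle is the bookkeeping of tensor conventions: one must verify that the two cross evaluations of $D^2 F$ reproduce $\partial_{(0,1)} f$ and $\partial_{(1,0)} f$ in the correct tensor order, and that the transpose appearing in \eqref{eq:proposition:Schwartz-1} is exactly the slot swap on $\lin((\bR^e)^{\otimes 2}, \bR^d)$ induced by the Fr\'echet-Schwarz symmetry. A secondary technicality is the passage from the $\mu$-a.e.\ statement to a pointwise identity on $\supp(\mu)$, which is where the continuity hypothesis on $D^2 F$ is used.
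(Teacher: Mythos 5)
Your proof is correct and reaches the paper's conclusion, but it takes a somewhat different route. You invoke the classical Schwarz symmetry of the second Fr\'echet derivative of the lift (the fact recorded in Equation \eqref{eq:shuffle_FrechetDeriv}) and then identify the two cross evaluations $D^2F(x_0,X)\bigl[(v,0)\otimes(0,h')\bigr]$ and $D^2F(x_0,X)\bigl[(0,h')\otimes(v,0)\bigr]$ with the lifted mixed derivatives $\bE[\partial_{(0,1)}f(x_0,\mu,X)\cdot(v\otimes h')]$ and $\bE[\partial_{(1,0)}f(x_0,\mu,X)\cdot(h'\otimes v)]$; this identification (mixed partials of the lift equal the iterated spatial/Lions derivatives, with the right slot ordering) is exactly the bookkeeping step you flag but do not carry out, and it is the only real work your argument defers. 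The paper instead avoids appealing to the abstract symmetric-bilinear theorem for this proposition: it starts from the rectangle identity for second-order differences of $F$, applies the mean value theorem to each side to produce the two iterated-derivative integrals involving $\nabla_x\partial_\mu f$ and $\partial_\mu\nabla_x f$, and then scales $h=\varepsilon\,\xi(X)$, $y=\varepsilon y'$ and lets $\varepsilon\to 0$ using continuity of the second derivative, in effect re-proving the symmetry in this mixed finite/infinite-dimensional setting. (Your strategy is the one the paper itself uses later in the proof of Theorem \ref{thm:Schwarz-Lions}.) The closing step is the same in both arguments: restrict the $L^2$ perturbation to $h'=\phi(X)$ with $\phi$ bounded measurable (your tower-property remark correctly explains why nothing is lost), deduce the identity $\mu$-almost everywhere, and upgrade to all of $\supp(\mu)$ by continuity. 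So your approach buys brevity by citing a known theorem at the price of the cross-partial identification, while the paper's difference-quotient argument is longer but self-contained.
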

	
	An important distinction to make here is that spatial and Lions derivatives commute when the spatial derivative is not in terms of the free variable generated by the application of that Lions derivative. In particular, Equation \eqref{eq:proposition:Schwartz-1} contrasts with \eqref{eq:CD-Schwarz-1}. 
	\begin{proof}
		Let $(x, X) \in \bR^e \times L^2(\Omega, \bP; \bR^e)$ be a point around which the function $F$ is twice Fr\'echet differentiable and let $(y, h) \in \bR^e \times L^2(\Omega, \bP; \bR^e)$. To avoid confusion, for the purposes of this proof we denote $\fD$ as the Fr\'echet derivative of the function $F$ that maps $(x, X) \mapsto F(x, X)$ and $D$ as the Fr\'echet derivative of the function $F(x, \cdot)$ that maps $X \mapsto F(x, X)$. 
		
		We note that 
		\begin{align}
			\label{eq:CommutativeDerivLR}
			\Big[ F(x+y, X+h)& - F(x, X+h) \Big] - \Big[ F(x+y, X) - F(x, X) \Big] 
			\\
			\label{eq:CommutativeDerivRL}
			=&\Big[ F(x+y, X+h) - F(x+y, X) \Big] - \Big[ F(x, X+h) - F(x, X) \Big]. 
		\end{align}
		We denote $\Pi_\theta^{X, X+h} = \bP \circ \big( X(\cdot) + \theta h(\cdot) \big)^{-1}$. 
		Using the Mean Value Theorem, we have that
		\begin{align*}
			\eqref{eq:CommutativeDerivLR} =&  \int_0^1 \bE\Big[ \partial_\mu f \Big( x+y, \Pi_{\theta}^{X, X+h}, X(\omega) + \theta h(\omega) \Big) \cdot h(\omega) \Big]  d\theta
			\\
			& - \int_0^1 \bE\Big[ \partial_\mu f\Big( x, \Pi_{\theta}^{X, X+h}, X(\omega)+\theta h(\omega) \Big) \cdot h(\omega) \Big]  d\theta 
			\\
			=& \int_0^1 \int_0^1 \bE \Big[ \nabla_x \partial_\mu f\Big( x+ \theta' y, \Pi_{\theta}^{X, X+h}, X(\omega)+\theta h(\omega) \Big) \cdot h(\omega) \otimes y \Big] d\theta d\theta'
		\end{align*}
		Similarly, 
		$$
		\eqref{eq:CommutativeDerivRL} = \int_0^1 \int_0^1 \bE \Big[ \partial_\mu \nabla_x  f\Big( x+ \theta y, \Pi_{\theta'}^{X, X+h}, X(\omega)+\theta' h(\omega) \Big) \cdot y \otimes h(\omega) \Big] d\theta d\theta'. 
		$$
		Choose $h(\omega) = \varepsilon \cdot \xi\big( X(\omega) \big)$ where $\xi:\bR^e \to \bR^e$ is a bounded measurable function and $y = \varepsilon y'$. By dividing by $\varepsilon$ and taking a limit as $\varepsilon \to 0$ plus the continuity of $\fD^2 F$, we conclude that
		\begin{align*}
			\bE \Big[& \partial_\mu \nabla_x f\Big( x, \bP\circ X^{-1}, X(\omega) \Big) \cdot \xi\big( X(\omega)\big) \otimes y' \Big] = \bE \Big[ \nabla_x \partial_\mu f\Big( x, \bP\circ X^{-1}, X(\omega) \Big) \cdot y' \otimes \xi\big( X(\omega) \big) \Big]
			\\
			&= \bE \Big[ \nabla_x \partial_\mu f\Big( x, \bP\circ X^{-1}, X(\omega) \Big)^T \cdot \xi\big( X(\omega) \big) \otimes y' \Big]. 
		\end{align*}
		This is true for any choice of $\xi$ bounded Borel measurable function on the support of the random variable $X$, so that Equation \eqref{eq:proposition:Schwartz-1} holds. 
	\end{proof}
	
	Given $a\in A_n$ and a permutation $\sigma \in \Shuf(n)$, we denote
	$$
	\sigma[a] = \big( \sigma[a]_i \big)_{i=1, ..., n} = \big( a_{\sigma(i)} \big)_{i=1, ..., n}. 
	$$
	
	\begin{theorem}[Schwarz Theorem for Lions Derivatives]
		\label{thm:Schwarz-Lions}
		Let $\alpha, \beta>0$ and let $\gamma>\alpha\wedge \beta$. Let $f: \bR^e \times \cP_2(\bR^e) \to \bR^d$ such that $f\in C_b\big[ A^{\gamma, \alpha, \beta}[0] \big]\big( \bR^e\times \cP_2(\bR^e); \bR^d \big)$. Let $\Shuf(|a|)$ be the set of permutations on the set $\{1, ..., |a| \}$. 
		
		Then $\forall a\in A^{\gamma, \alpha, \beta}[0]$ and $\forall \sigma\in \Shuf(|a|)$, $v_1, ..., v_{|a|} \in \bR^e$, $\mu\in \cP_2(\bR^e)$, $x_0, x_1, ..., x_{m[a]} \in \bR^e$, 
		\begin{equation}
			\label{eq:proposition:Schwartz-2}
			\partial_{a} f\Big( x_0, \mu, \boldsymbol{x}_{m\{a\}} \Big) \cdot \bigotimes_{i=1}^n v_i 
			=
			\partial_{\llbracket \sigma[a]\rrbracket_0} f\Big( x_0, \mu, \boldsymbol{x}_{\llbracket \sigma(a)\rrbracket_0 \circ (a)} \Big) \cdot \bigotimes_{i=1}^n v_{\sigma(i)} . 
		\end{equation}
		where
		\begin{equation*}
			\boldsymbol{x}_{m\{a\}} = \Big( x_1, ..., x_{m[a]} \Big)
			\quad \mbox{and}\quad 
			\boldsymbol{x}_{\llbracket \sigma(a)\rrbracket_0 \circ (a)} = \Big( x_{\sigma(a)_{a^{-1}[1]}}, ..., x_{\sigma(a)_{a^{-1}[m[a]]}} \Big). 
		\end{equation*}
	\end{theorem}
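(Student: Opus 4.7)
The plan is to reduce the result to the classical Schwarz theorem applied to the canonical lift $F : \bR^e \times L^2(\Omega, \cF, \bP; \bR^e) \to \bR^d$ defined by $F(x_0, X) = f(x_0, \bP \circ X^{-1})$. Under the assumption $f \in C_b\big[A^{\gamma, \alpha, \beta}[0]\big]$, all iterated mixed Fr\'echet derivatives $\fD^n F$ of order $n = |a|$ exist and are continuous, so the classical multilinear Schwarz identity \eqref{eq:shuffle_FrechetDeriv} gives
\[
\fD^n F(x_0, X) \cdot \bigotimes_{i=1}^n (v_i, h_i) = \fD^n F(x_0, X) \cdot \bigotimes_{i=1}^n (v_{\sigma(i)}, h_{\sigma(i)})
\]
for every $\sigma \in \Shuf(n)$ and every $(v_i, h_i) \in \bR^e \times L^2(\Omega, \cF, \bP; \bR^e)$. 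The theorem then amounts to unpacking the two sides of this symmetry in terms of the Lions derivatives $\partial_a f$ using the correspondence between iterated Fr\'echet differentiation and the inductive definition of $\partial_a$.

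Since $\Shuf(n)$ is generated by the adjacent transpositions $\tau_k = (k, k{+}1)$, $k \in \{1, \ldots, n{-}1\}$, it suffices to prove \eqref{eq:proposition:Schwartz-2} for $\sigma = \tau_k$. The sequence $\tau_k[a]$ differs from $a$ only in its $k$-th and $(k{+}1)$-th entries, and the equivalence class $\llbracket \tau_k[a] \rrbracket_0$ renormalises the swapped sequence back into $A_n[0]$; the reassignment $\boldsymbol{x}_{\llbracket \tau_k[a]\rrbracket_0 \circ (a)}$ is exactly the corresponding relabelling of free variables, reflecting the fact that the underlying partition of $\{0, 1, \ldots, n\}$ supplied by Lemma \ref{Lemma:Bijection-Partition} is preserved. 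I would then induct on the position $k$, starting at $k = n{-}1$, where the statement reduces to the commutativity of the final two differentiations $\partial_{a_{n-1}}$ and $\partial_{a_n}$. The verification splits into three cases: (i) both are spatial derivatives (in $x_0$ or in some free variable $x_j$), in which case the claim is the ordinary Schwarz theorem on $\bR^e$ or on $(\bR^e)^{\times m[a^-]}$; (ii) one is spatial and the other is a Lions derivative, which is precisely Proposition \ref{proposition:Schwartz-1} applied to the lift of $\partial_{a^{--}} f$ with the other free variables held fixed; (iii) both are Lions derivatives, which follows from commutativity of the Fr\'echet derivative on $L^2$ in two independent directions $h, h'$. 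The inductive step uses Corollary \ref{corollary:LionsTaylor2} to see that $\partial_{(a_1, \ldots, a_{k-1})} f$ is itself a function in a class $C_b\big[A^{\gamma', \alpha, \beta}[\cdot]\big]$ of sufficient regularity, so that the base-case commutation applies to the pair $(a_k, a_{k+1})$ at its end.

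The main obstacle is the combinatorial bookkeeping in case (iii), where exchanging two adjacent Lions differentiations can force a relabelling of the two freshly generated free variables and permute the block structure of the partition associated with $a$ via Lemma \ref{Lemma:Bijection-Partition-simple}. The cleanest accounting is to observe that $\partial_a f$ is determined intrinsically by the partition corresponding to $a$, together with the prescribed contraction of its $|a|$ tensor legs with the incoming tangent vectors $v_1, \ldots, v_{|a|}$. Swapping $v_k \leftrightarrow v_{k+1}$ moves the corresponding legs, and re-expressing this contraction with the partition sequence written in its canonical form $\llbracket \tau_k[a] \rrbracket_0$ yields precisely $\partial_{\llbracket \tau_k[a]\rrbracket_0} f$ evaluated at the relabelled free variables $\boldsymbol{x}_{\llbracket \tau_k[a]\rrbracket_0 \circ (a)}$. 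The $0$-entries are untouched by this relabelling because the distinguished variable $x_0$ does not enter the partition structure, which is why the statement on the right-hand side of \eqref{eq:proposition:Schwartz-2} takes the form given.
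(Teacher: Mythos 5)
Your overall strategy is the same as the paper's: reduce to adjacent transpositions, split into the three pair types (spatial--spatial, spatial--Lions, Lions--Lions), invoke Proposition \ref{proposition:Schwartz-1} for the mixed case, and track the relabelling of free variables through $\llbracket \sigma[a]\rrbracket_0$. However, there is a genuine gap in your case (iii). You assert that the symmetry of $\partial_\mu\partial_\mu$ ``follows from commutativity of the Fr\'echet derivative on $L^2$ in two independent directions $h, h'$.'' But the second Fr\'echet derivative of the lift evaluated in directions $h_1, h_2$ is \emph{not} the kernel $\partial_{(1,2)}f$ alone: it is the sum
\begin{equation*}
\bE^1\Big[ \partial_{(1,1)} f\big( \mu, X(\omega_1)\big)\cdot h_1(\omega_1)\otimes h_2(\omega_1)\Big]
+\bE^{1,2}\Big[ \partial_{(1,2)} f\big( \mu, X(\omega_1), X(\omega_2)\big)\cdot h_1(\omega_1)\otimes h_2(\omega_2)\Big],
\end{equation*}
so classical Schwarz symmetry only gives symmetry of this sum as a bilinear form, not of the two kernels individually. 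The heart of the paper's proof is precisely the disentangling step: choosing test directions $h_i(\omega)=\varepsilon(\omega)\xi_i\big(X(\omega)\big)$ with $\varepsilon$ a symmetric $\{-1,1\}$-valued variable independent of $X$ kills the off-diagonal term and yields the symmetry of $\partial_{(1,1)}$ (Equation \eqref{eq:proposition:Schwartz-2.2}); subtracting and testing against general bounded measurable $\xi_1,\xi_2$ of $X$ then identifies the symmetry of $\partial_{(1,2)}$ with the free-variable swap (Equation \eqref{eq:proposition:Schwartz-2.1}). Without some such separation argument your case (iii) — and also the sub-case of (ii) where the spatial derivative acts on the free variable just created by the Lions derivative — is unsupported, and this is exactly where the nontrivial content of the theorem lies.

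Two smaller points. First, your opening claim that $f\in C_b\big[A^{\gamma,\alpha,\beta}[0]\big]$ makes the lift $n$-times Fr\'echet differentiable so that \eqref{eq:shuffle_FrechetDeriv} applies at order $n=|a|$ is stronger than what the definition hands you and is never needed: the paper only ever uses second-order (pairwise) symmetry of the lift of $\partial_{a^{--}}f$ with the remaining variables frozen, which is available directly from Definition \ref{def:general:Lions-spatial:derivative}. Second, invoking Corollary \ref{corollary:LionsTaylor2} in the inductive step is unnecessary machinery; boundedness and Lipschitz continuity of the intermediate derivatives, which is part of the definition of the function class, is all the regularity the pairwise commutation requires.
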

	
	\begin{proof}
		Firstly, we restrict ourselves to the case when $a\in A$ and $f\in C_b^{(n)}\big( \cP_2(\bR^e); \bR^d\big)$ for $n = \big\lfloor\tfrac{\gamma}{\beta} \big\rfloor$. We start with the case $|a|=2$, which can be found in \cite{CarmonaDelarue2017book1}*{Corollary 5.89}. By the symmetry of the Fr\'echet derivative, we have that
		\begin{align*}
			&\bE^1\Big[ \partial_{(1,1)} f\Big( \mu, X(\omega_1) \Big) \cdot h_1(\omega_1) \otimes h_2(\omega_1) \Big] + \bE^{1,2}\Big[ \partial_{(1,2)} f\Big( \mu, X(\omega_1), X(\omega_2) \Big) \cdot h_1(\omega_1) \otimes h_2(\omega_2) \Big]
			\\
			&= \bE^1\Big[ \partial_{(1,1)} f\Big( \mu, X(\omega_1) \Big) \cdot h_2(\omega_1) \otimes h_1(\omega_1) \Big] + \bE^{1,2}\Big[ \partial_{(1,2)} f\Big( \mu, X(\omega_1), X(\omega_2) \Big) \cdot h_2(\omega_1) \otimes h_1(\omega_2) \Big]. 
		\end{align*}
		We choose $h_i(\omega) = \varepsilon(\omega) \xi_i \big( X(\omega) \big)$ where $\xi$ is a bounded Borel measurable function and $\varepsilon: \Omega \to \{-1, 1\}$ be uniformly distributed and independent of $X$. Then
		$$
		\bE^1\Big[ \partial_{(1,1)} f \Big(\mu, X(\omega_1)\Big) \cdot \xi_1\big( X(\omega_1) \big) \otimes \xi_2\big( X(\omega_1) \big) \Big] 
		=
		\bE^1\Big[ \partial_{(1,1)} f \Big(\mu, X(\omega_1)\Big)^T \cdot \xi_2\big( X(\omega_1)\big) \otimes \xi_1\big( X(\omega_1) \big) \Big] . 
		$$
		By continuity of the mapping $x\mapsto \partial_{(1,1)} f(\mu, x)$, we conclude that $\forall v_1, v_2\in \bR^e$ and $\forall x\in \supp(\mu)$, 
		\begin{equation}
			\label{eq:proposition:Schwartz-2.2}
			\partial_{(1,1)}  f\Big( \mu, x\Big) \cdot v_1 \otimes v_2 
			= 
			\partial_{(1,1)}  f\Big( \mu, x\Big) \cdot v_2 \otimes v_1. 
		\end{equation}
		Subtracting this from above, we get that
		\begin{align*}
			\bE^{1,2}\Big[& \partial_{(1,2)} f \Big(\mu, X(\omega_1), X(\omega_2) \Big) \cdot \xi_1\big( X(\omega_1) \big) \otimes \xi_2\big( X(\omega_2) \big) \Big] 
			\\
			=&
			\bE^{1,2}\Big[ \partial_{(1,2)} f \Big(\mu, X(\omega_1), X(\omega_2) \Big) \cdot \xi_2\big( X(\omega_1) \big) \otimes \xi_1\big( X(\omega_2) \big) \Big]. 
		\end{align*}
		By the continuity of of the mapping $(x_1, x_2) \mapsto \partial_{(1,2)} f(\mu, x_1, x_2)$, we conclude that $\forall v_1, v_2\in \bR^e$ and $\forall x\in \supp(\mu)$, 
		\begin{equation}
			\label{eq:proposition:Schwartz-2.1}
			\partial_{(1,2)}  f\Big( \mu, x_1, x_2 \Big) \cdot v_1 \otimes v_2 
			= 
			\partial_{(1,2)}  f\Big( \mu, x_2, x_1 \Big) \cdot v_2 \otimes v_1. 
		\end{equation}
		
		Now we consider some partition sequence $a\in A_n$ and permutation $\sigma \in \Shuf(n)$. This can be written as the composition of a collection of adjacent cycles of length 2. Therefore, we simplify the problem down to considering the symmetry relations of three possible types of pairs of derivatives
		$$
		\partial_\mu \partial_\mu, \quad \nabla_{x} \partial_\mu \quad \mbox{and} \quad \nabla_{x} \nabla_{x}. 
		$$
		The first case has been proved in Equation \eqref{eq:proposition:Schwartz-2.1}. For the second case, there are two scenarios: either the spatial derivative is for the free variable generated by the application of the Lions derivative, in which case we use Equation \eqref{eq:proposition:Schwartz-2.2}. Alternatively, the spatial derivative is in a variable that is separate from the free variable generated by the Lions derivative, in which case we use Equation \eqref{eq:proposition:Schwartz-1}. The final scenario, the swapping of two spatial derivatives, is well documented and needs no further remarks. 
		
		By applying these identities iteratively until we obtain the permutation $\sigma$, we remark that permuting two spatial derivatives and permuting a spatial and Lions derivative permutes the sequence $a$ and the directional component $\bigotimes v$. On the other hand, permuting two Lions derivatives leads to a permutation in the variables $(x_1, ..., x_{m[a]})$ which is captured by the permutation $\llbracket \sigma(a) \rrbracket_0\circ (a)$. This results in Equation \eqref{eq:proposition:Schwartz-2}. 
		
		The extension to the case $a\in A^{\gamma, \alpha, \beta}[0]$ and $f\in C_b\big[ A^{\gamma, \alpha, \beta}[0] \big] \big( \bR^e \times \cP_2(\bR^e); \bR^d\big)$ is straightforward using Leibniz identities. 
	\end{proof}
	
	\begin{remark}
		As a concluding remark, we leave it as an exercise to the reader to find a more compact statement of Equation \eqref{theorem:LionsTaylor2} that additionally captures the equalities proved in \ref{thm:Schwarz-Lions}. 
	\end{remark}

	\subsection{Proof of Theorem \ref{theorem:LionsTaylor2}}
	
	\begin{proof}
		Firstly, let's consider the simpler case where $\alpha=\beta$ so that
		\begin{equation*}
			A^{\gamma, \alpha, \alpha}[0] = \bigcup_{i=0}^{\big\lfloor \tfrac{\gamma}{\alpha} \big\rfloor } A_i[0]
		\end{equation*}
		Choosing $\alpha <\gamma<2\alpha$, we observe that both the functions $\partial_{\mu} f$ and $\nabla_{x_0} f$ exist and the initial case
		\begin{align*}
			f(y_0, \nu)& - f(x_0, \mu) = \int_0^1 \frac{d}{d\xi} f\Big( x_0^\xi, \Pi_{\xi}^{\mu, \nu}\Big) d\xi
			\\
			=& \int_0^1 \nabla_{x_0} f\Big( x_0^\xi, \Pi_{\xi}^{\mu, \nu}\Big) \cdot (y_0-x_0) d\xi
			\\
			&+\int_0^1 \int_{(\bR^e)^{\oplus 2}} \partial_\mu f\Big( x_0^\xi, \Pi_{\xi}^{\mu, \nu}, x_1^\xi \Big) \cdot (y_1 - x_1) \cdot d\Pi^{\mu, \nu}(x_1, y_1) d\xi
			\\
			=& \sum_{a\in A_{1}[0]} \rD^a f(x_0, \mu)\big[ y_0-x_0, \Pi^{\mu, \nu}\big]
			\\
			&+\sum_{a\in A_{\ast}[0]}^{\gamma, \alpha, \alpha} \underbrace{\int_{(\bR^e)^{\oplus 2}} ... \int_{(\bR^e)^{\oplus 2}}}_{\times m[a]} f_{\ast}^a\big[ (x_0, y_0), \Pi^{\mu, \nu} \big] \cdot \bigotimes_{p=1}^{|a|} ( y_{a_p} - x_{a_p} ) \cdot d\big( \Pi^{\mu, \nu} \big)^{\times m[a]}\Big( (\boldsymbol{x}, \boldsymbol{y})_{m[a]} \Big)
		\end{align*}
		which is just Equation \eqref{eq:FullTaylorExpansion}. 
		
		Now suppose that Equation \eqref{eq:FullTaylorExpansion} holds for some choice of $\gamma$ and let $\varepsilon>0$ satisfy that $\big\lfloor \tfrac{\gamma + \varepsilon}{\alpha} \big\rfloor = \big\lfloor \tfrac{\gamma}{\alpha} \big\rfloor +1$. In particular, this means that
		\begin{equation*}
			A^{\gamma, \alpha, \alpha}[0] = \bigcup_{i=0}^{\big\lfloor \tfrac{\gamma}{\alpha} \big\rfloor} A_i[0], 
			\quad
			A^{\gamma+\varepsilon, \alpha, \alpha}[0] = \bigcup_{i=0}^{\big\lfloor \tfrac{\gamma}{\alpha} \big\rfloor +1} A_i[0]. 
		\end{equation*}
		
		Suppose that $f\in C_b\big[ A^{\gamma+\varepsilon, \alpha, \alpha} [0] \big]\big(\bR^e \times \cP_2(\bR^e); \bR^d\big)$, so that $f\in C_b\big[ A^{\gamma, \alpha, \alpha}[0] \big]\big(\bR^e \times \cP_2(\bR^e); \bR^d\big)$ too. Thus we have that Equation \eqref{eq:FullTaylorExpansion} holds for $f$. Consider each term in $R_{\gamma, \alpha, \beta}^{(x_0, y_0), \Pi^{\mu, \nu}}(f)$: for $a\in A_{\ast}^{\gamma, \alpha, \alpha}[0]$, 
		\begin{align*}
			&\underbrace{ \int_{(\bR^e)^{\oplus 2}} ... \int_{(\bR^e)^{\oplus 2}} }_{\times m[a]} f_{\ast}^{a}[ (x_0, y_0), \Pi^{\mu, \nu}] 
			\cdot 
			\bigotimes_{p=1}^{|a|} ( y_{a_p} - x_{a_p}) \cdot d\big( \Pi^{\mu, \nu}\big)^{\times m[a]} \Big( (\boldsymbol{x}, \boldsymbol{y})_{m\{a\} }\Big)
			\\
			&=\int_0^1 \int_0^\xi \underbrace{ \int_{(\bR^e)^{\oplus 2}} ... \int_{(\bR^e)^{\oplus 2}} }_{\times m[a]} \partial_{(a \cdot 0)} f\Big( x_0^{\theta}, \Pi_{\theta}^{\mu, \nu}, \boldsymbol{x}_{m\{a\}}^\theta \Big) \cdot \bigotimes_{p=1}^{|a|}(y_{a_p} - x_{a_p})
			\\
			&\qquad \otimes (y_0 - x_0) \cdot d \big(\Pi^{\mu, \nu}\big)\Big( (\boldsymbol{x}, \boldsymbol{y})_{m\{a\} }\Big) d\theta (1-\xi)^{|a|-1} d\xi 
			\\
			&+\int_0^1 \int_0^\xi \underbrace{ \int_{(\bR^e)^{\oplus 2}} ... \int_{(\bR^e)^{\oplus 2}} }_{\times m[a]} \sum_{j=1}^{m[a]} \partial_{(a\cdot j)} f\Big( x_0^\theta, \Pi_{\theta}^{\mu, \nu}, \boldsymbol{x}_{m\{a\}}^\theta \Big) \cdot \bigotimes_{p=1}^{|a|}(y_{a_p} - x_{a_p}) 
			\\
			&\qquad \otimes (y_j - x_j) \cdot d \big(\Pi^{\mu, \nu}\big)\Big( (\boldsymbol{x}, \boldsymbol{y})_{m\{a\} }\Big) d\theta (1-\xi)^{|a|-1} d\xi 
			\\
			&+\int_0^1 \int_0^\xi \underbrace{ \int_{(\bR^e)^{\oplus 2}} ... \int_{(\bR^e)^{\oplus 2}} }_{\times m[a]+1} \partial_\mu \partial_{a} f\Big( x_0^\theta, \Pi_{\theta}^{\mu, \nu}, \boldsymbol{x}_{m\{a\}}^\theta, x_{m[a]+1}^\theta \Big) \cdot \bigotimes_{p=1}^{|a|}(y_{a_p} - x_{a_p}) 
			\\
			&\qquad \otimes (y_{m[a]+1} - x_{m[a]+1}) \cdot d \big(\Pi^{\mu, \nu}\big)\Big( (\boldsymbol{x}, \boldsymbol{y})_{m\{a\} }\Big) \cdot d \big(\Pi^{\mu, \nu}\big)\Big( x_{m[a]+1}, y_{m[a]+1}\Big) d\theta (1-\xi)^{|a|-1} d\xi 
			\\
			&= \sum_{j=0}^{m[a]+1} \frac{1}{|(a\cdot j)|!} \cdot \rD^{(a\cdot j)}f(x_0, \mu) \big[y_0-x_0, \Pi^{\mu, \nu} \big]
			\\
			&\quad +
			\sum_{j=0}^{m[a]+1} \underbrace{ \int_{(\bR^e)^{\oplus 2}} ... \int_{(\bR^e)^{\oplus 2}} }_{\times m[(a\cdot j)]} f_{\ast}^{(a\cdot j)}[(x_0, y_0), \Pi^{\mu, \nu}] 
			\cdot \bigotimes_{p=1}^{|(a\cdot j)|} (y_{a_p} - x_{a_p}) \cdot d\big(\Pi^{\mu, \nu}\big)^{\times m[(a\cdot j)]} \Big( (\boldsymbol{x}, \boldsymbol{y})_{m[(a\cdot j)]}\Big). 
			\\
			&\qquad 
		\end{align*}
		By summing all these terms together, we obtain the inductive hypothesis. 
		
		Without loss of generality, we assume that $\alpha<\beta$. This is the most physical scenario and the proof proceeds equivalently for when $\alpha>\beta$. We fix $\alpha$ and $\beta$ and proceed via induction on $\gamma$. 
		
		Firstly, note that by assumption $\gamma>\alpha$ so that $(0)\in A^{\gamma, \alpha, \beta}[0]$. Let us suppose for the moment that $\beta, 2\alpha>\gamma$ so that
		\begin{equation*}
			A^{\gamma, \alpha, \beta}[0] = \Big\{ (0) \Big\}, 
			\quad 
			A_{+}^{\gamma, \alpha, \beta}[0] = \Big\{ \emptyset \Big\}, 
			\quad
			A_{\ast}^{\gamma, \alpha, \beta}[0] = \Big\{ (0) \Big\}, 
			\quad 
			A_{\times}^{\gamma, \alpha, \beta}[0] = \emptyset. 
		\end{equation*}
		Then
		\begin{align*}
			f(y_0, \nu) - f(x_0, \mu) =&  \Big( f(y_0, \mu) - f(x_0, \mu) \Big) + \Big( f(y_0, \nu) - f(y_0, \mu) \Big)
			\\
			=& \sum_{a \in A[0]}^{\gamma, \alpha, \beta} \frac{1}{|a|!} \cdot \rD^af\Big( x_0, \mu \Big)\Big[ y_0 - x_0 \Big] + \Big( f(y_0, \nu) - f(y_0, \mu) \Big)
			\\
			&+ \int_0^1 \Big( \nabla_{x_0} f( x_0^\xi, \mu) - \nabla_{x_0} f( x_0, \mu) \Big) d\xi
		\end{align*}
		which accounts for the trivial case. 
		
		Next, let us consider the case when $\alpha = \beta-\varepsilon$ for some $\varepsilon \in (0, \beta)$. For $\varepsilon$ small enough so that
		\begin{equation*}
			\big\lfloor \tfrac{\gamma}{\alpha} \big\rfloor = \big\lfloor \tfrac{\gamma}{\beta} \big\rfloor
		\end{equation*}
		we remark that $A_{\ast}^{\gamma, \alpha, \beta}[0] = A_{\ast}^{\gamma, \beta, \beta}[0]$ and conclude trivially. Now suppose that $\varepsilon$ is chosen so that $\alpha \cdot(n+1) \leq \gamma,$ and $\alpha \cdot n + \beta > \gamma$. Then 
		\begin{equation*}
			A^{\gamma, \alpha, \beta}[0] \backslash A^{\gamma, \beta, \beta}[0] = \big\{ \underbrace{(0, ..., 0)}_{n+1} \big\}. 
		\end{equation*}
		By calculating the sets described in Equation \eqref{eq:definition:Special-A}, we get
		\begin{equation*}
			A_{+}^{\gamma, \alpha, \beta}[0] = \Big\{ a' \Big\}, 
			\quad
			A_{\times}^{\gamma, \alpha, \beta}[0] = \Big\{ (a', 0) \Big\}. 
		\end{equation*}
		Briefly denoting $a'=(0, ..., 0)$ such that $|a'| = n$, we note that
		\begin{align*}
			f_{\ast}^{a'}[ &(x_0, y_0), \Pi^{\mu, \nu}] \cdot ( y_{0} - x_{0})^{\otimes n}
			\\
			&= 
			f_{\times}^{a'}[ (x_0, y_0), \Pi^{\mu, \nu}] \cdot ( y_{0} - x_{0})^{\otimes n}
			+
			f_{+}^{a'}[ (x_0, y_0), \Pi^{\mu, \nu}] \cdot ( y_{0} - x_{0})^{\otimes n} 
			\\
			&=\frac{1}{n+1} \cdot \rD^{(a',0)} f(x_0, \mu)\big[ y_0-x_0, \Pi^{\mu, \nu}\big]
			+
			f_{\times}^{(a', 0)}[ (x_0, y_0), \Pi^{\mu, \nu}] \cdot ( y_{0} - x_{0})^{\otimes n+1}
			\\
			&\quad +
			f_{+}^{a'}[ (x_0, y_0), \Pi^{\mu, \nu}] \cdot ( y_{0} - x_{0})^{\otimes n} . 
		\end{align*}
		This agrees with the inductive hypothesis. 
		
		Now suppose that Equation \eqref{eq:FullTaylorExpansion} holds for some triple $(\gamma, \alpha, \beta)$ and we want to verify that Equation \eqref{eq:FullTaylorExpansion} holds for $(\gamma, \alpha-\varepsilon, \beta)$. Note that for $\varepsilon$ chosen small enough, $A^{\gamma, \alpha, \beta} = A^{\gamma, \alpha-\varepsilon, \beta}[0]$ so that the inductive hypothesis will be true for $\varepsilon$ chosen small enough. 
		
		We choose $\varepsilon$ small enough so that $A^{\gamma, \alpha-\varepsilon, \beta}[0] \backslash A^{\gamma, \alpha, \beta}[0] \neq \emptyset$ and that 
		\begin{equation*}
			\Big| \Big\{ \scG_{\alpha - \varepsilon, \beta}[a]: a\in A^{\gamma, \alpha-\varepsilon, \beta}[0] \backslash A^{\gamma, \alpha, \beta}[0] \Big\} \Big| = 1. 
		\end{equation*}
		Let us denote $\alpha' = \alpha-\varepsilon$. Let $f\in C_b\big[ A^{\gamma, \alpha', \beta}[0] \big]\big(\bR^e \times \cP_2(\bR^e); \bR^d\big)$, so that we also have that $f\in C_b\big[ A^{\gamma, \alpha, \beta}[0] \big]\big(\bR^e \times \cP_2(\bR^e); \bR^d\big)$ and Equation \eqref{eq:FullTaylorExpansion} holds for $f$. 
		
		Let $a\in A^{\gamma, \alpha', \beta}[0] \backslash A^{\gamma, \alpha, \beta}[0]$. We want to distinguish the two cases where $a_{|a|}=0$ or $a_{|a|}>0$. When we have the former, we have that $\scG_{\alpha, \beta}[a^-] \in (\gamma-\alpha, \gamma]$ and when we have the latter $\scG_{\alpha, \beta}[a^-] \in (\gamma-\beta, \gamma-\alpha]$.  We denote the sequence $a^-=(a_i)_{i=1, ..., |a|-1}$. 
		
		Let us suppose first that that $a_{|a|} = 0$. Then either $a^-\in A_{\ast}^{\gamma, \alpha, \beta}[0]$ or $a^-\in A_{\times}^{\gamma, \alpha, \beta}[0]$. By the minimality of $\varepsilon$, for each $a\in A^{\gamma, \alpha', \beta}[0] \backslash A^{\gamma, \alpha, \beta}[0]$ we have that $\scG_{\alpha', \beta}[a]\in (\gamma-\alpha, \gamma]$. 
		
		Suppose first that $a^-\in A_{\times}^{\gamma, \alpha, \beta}[0]$. By considering the remainder term and using that $\nabla_{x_0} \partial_{a^-} f = \partial_a f$, 
		\begin{align}
			\nonumber
			&\underbrace{ \int_{(\bR^e)^{\oplus 2}} ... \int_{(\bR^e)^{\oplus 2}} }_{\times m[a]} f_{\times}^{a^-}[ (x_0, y_0), \Pi^{\mu, \nu}] 
			\cdot 
			\bigotimes_{p=1}^{|a^-|} ( y_{a_p} - x_{a_p}) \cdot d\big( \Pi^{\mu, \nu}\big)^{\times m[a]} \Big( (\boldsymbol{x}, \boldsymbol{y})_{m\{a\} }\Big)
			\\
			\nonumber
			&= \frac{1}{|a|} \cdot \rD^a f(x_0, \mu)\big[ y_0- x_0, \Pi^{\mu, \nu} \big] 
			\\
			\label{eq:FullTaylorExpansion_ast_1.1}
			&\qquad + \underbrace{ \int_{(\bR^e)^{\oplus 2}} ... \int_{(\bR^e)^{\oplus 2}} }_{\times m[a]} f_{\times}^{a}[ (x_0, y_0), \Pi^{\mu, \nu}] 
			\cdot 
			\bigotimes_{p=1}^{|a|} ( y_{a_p} - x_{a_p}) \cdot d\big( \Pi^{\mu, \nu}\big)^{\times m[a]} \Big( (\boldsymbol{x}, \boldsymbol{y})_{m\{a\} }\Big)
		\end{align}
		In the same line of thought, we also note that $\scG_{\alpha', \beta}[a] \in (\gamma-\alpha', \gamma]$ and $\scG_{\alpha', \beta}[a^-]\leq \gamma-\alpha'$ so that $a^{-} \notin A_{+}^{\gamma, \alpha', \beta}[0]$ and $a\in A_{+}^{\gamma, \alpha', \beta}[0]$. Combining this with Equation \eqref{eq:FullTaylorExpansion_ast_1.1} provides the inductive hypothesis. 
		
		Secondly, suppose that $a^- \in A_{\ast}^{\gamma, \alpha, \beta}[0]$. As before, 
		\begin{align}
			\nonumber
			&\underbrace{ \int_{(\bR^e)^{\oplus 2}} ... \int_{(\bR^e)^{\oplus 2}} }_{\times m[a]} f_{\ast}^{a^-}[ (x_0, y_0), \Pi^{\mu, \nu}] 
			\cdot 
			\bigotimes_{p=1}^{|a^-|} ( y_{a_p} - x_{a_p}) \cdot d\big( \Pi^{\mu, \nu}\big)^{\times m[a]} \Big( (\boldsymbol{x}, \boldsymbol{y})_{m\{a\} }\Big)
			\\
			\nonumber
			&= \frac{1}{|a|} \cdot \rD^a f(x_0, \mu)\big[ y_0- x_0, \Pi^{\mu, \nu} \big] 
			\\
			\nonumber
			&\qquad + \underbrace{ \int_{(\bR^e)^{\oplus 2}} ... \int_{(\bR^e)^{\oplus 2}} }_{\times m[a]} f_{\ast}^{a}[ (x_0, y_0), \Pi^{\mu, \nu}] 
			\cdot 
			\bigotimes_{p=1}^{|a|} ( y_{a_p} - x_{a_p}) \cdot d\big( \Pi^{\mu, \nu}\big)^{\times m[a]} \Big( (\boldsymbol{x}, \boldsymbol{y})_{m\{a\}}\Big)
			\\
			\label{eq:FullTaylorExpansion_ast_1.2}
			&\qquad + \underbrace{ \int_{(\bR^e)^{\oplus 2}} ... \int_{(\bR^e)^{\oplus 2}} }_{\times m[a]} f_{+}^{a^-}[ (x_0, y_0), \Pi^{\mu, \nu}] 
			\cdot 
			\bigotimes_{p=1}^{|a^-|} ( y_{a_p} - x_{a_p}) \cdot d\big( \Pi^{\mu, \nu}\big)^{\times m[a]} \Big( (\boldsymbol{x}, \boldsymbol{y})_{m\{a\}}\Big)
		\end{align}
		However, when $a^- \in A^{\gamma, \alpha, \beta}[0]$ means that $\scG_{\alpha', \beta}[a^-]\in (\gamma-\beta, \gamma-\alpha']$ and there exists no $k\in 1, ..., |a|$ such that $(a_i)_{i=1, ..., k} \in A_{+}^{\gamma, \alpha, \beta}[0]$. Hence $a^- \in A_{+}^{\gamma, \alpha', \beta}[0]$ which implies that $a\in A_{\times}^{\gamma, \alpha', \beta}[0]$. Combining this with Equation \eqref{eq:FullTaylorExpansion_ast_1.2} provides the inductive hypothesis. 
		
		Now we consider the case where $a_{|a|} >0$. Then $\scG_{\alpha, \beta}[a^-] \in (\gamma-\beta, \gamma-\alpha]$ so that $a\in A_{+}^{\gamma, \alpha, \beta}[0]$ and $\exists \tilde{a} = (a^-, 0, ..., 0)\in A_{\times}^{\gamma, \alpha, \beta}[0]$. Denoting $a^-[k]=(a^-, \underbrace{0, ..., 0}_{\times k})$, we have that
		\begin{align}
			\nonumber
			\sum_{k=1}^{|\tilde{a}| - |a^-|}& \frac{1}{|a^-[k]|!} \cdot \rD^{a^-[k]} f\Big( x_0, \mu \Big)\Big[ y_0- x_0, \Pi^{\mu, \nu}\Big] 
			\\
			\nonumber
			&+ \underbrace{ \int_{(\bR^e)^{\oplus 2}} ... \int_{(\bR^e)^{\oplus 2}} }_{\times m[\tilde{a}]} f_{\times}^{\tilde{a}}[ (x_0, y_0), \Pi^{\mu, \nu}] 
			\cdot 
			\bigotimes_{p=1}^{|\tilde{a}|} ( y_{\tilde{a}_p} - x_{\tilde{a}_p}) \cdot d\big( \Pi^{\mu, \nu}\big)^{\times m[\tilde{a}]} \Big( (\boldsymbol{x}, \boldsymbol{y})_{m[\tilde{a}]}\Big)
			\\
			\nonumber
			&+ \underbrace{ \int_{(\bR^e)^{\oplus 2}} ... \int_{(\bR^e)^{\oplus 2}} }_{\times m[a^-]} f_{+}^{a^-}[ (x_0, y_0), \Pi^{\mu, \nu}] 
			\cdot 
			\bigotimes_{p=1}^{|a^-|} ( y_{a_p} - x_{a_p}) \cdot d\big( \Pi^{\mu, \nu}\big)^{\times m[a]} \Big( (\boldsymbol{x}, \boldsymbol{y})_{m\{a\}}\Big)
			\\
			\nonumber
			=&
			\underbrace{ \int_{(\bR^e)^{\oplus 2}} ... \int_{(\bR^e)^{\oplus 2}} }_{\times m[a^-]} f_{\ast}^{a^-}[ (x_0, y_0), \Pi^{\mu, \nu}] 
			\cdot 
			\bigotimes_{p=1}^{|a^-|} ( y_{a_p} - x_{a_p}) \cdot d\big( \Pi^{\mu, \nu}\big)^{\times m[a]} \Big( (\boldsymbol{x}, \boldsymbol{y})_{m\{a\}}\Big)
			\\
			\nonumber
			=& 
			\frac{1}{|a|!} \cdot \rD^{a} f\Big(x_0, \mu\Big)\Big[ (x_0, y_0), \Pi^{\mu, \nu} \Big]
			+\sum_{k=1}^{|\tilde{a}| - |a^-|} \frac{1}{|a^-[k]|!} \cdot \rD^{a^-[k]} f\Big( x_0, \mu \Big)\Big[ y_0- x_0, \Pi^{\mu, \nu}\Big] 
			\\
			\nonumber
			&+\underbrace{ \int_{(\bR^e)^{\oplus 2}} ... \int_{(\bR^e)^{\oplus 2}} }_{\times m[a]} f_{\ast}^{a}[ (x_0, y_0), \Pi^{\mu, \nu}] 
			\cdot 
			\bigotimes_{p=1}^{|a|} ( y_{a_p} - x_{a_p}) \cdot d\big( \Pi^{\mu, \nu}\big)^{\times m[a]} \Big( (\boldsymbol{x}, \boldsymbol{y})_{m\{a\}}\Big)
			\\
			&
			\nonumber
			+\underbrace{ \int_{(\bR^e)^{\oplus 2}} ... \int_{(\bR^e)^{\oplus 2}} }_{\times m[a^-]} f_{+}^{(a^-, 0)}[ (x_0, y_0), \Pi^{\mu, \nu}] 
			\cdot 
			\bigotimes_{p=1}^{|(a^-, 0)|} ( y_{a_p} - x_{a_p}) \cdot d\big( \Pi^{\mu, \nu}\big)^{\times m[a^-]} \Big( (\boldsymbol{x}, \boldsymbol{y})_{m[a^-]}\Big)
			\\
			\label{eq:FullTaylorExpansion_ast_1.3}
			&+ \underbrace{ \int_{(\bR^e)^{\oplus 2}} ... \int_{(\bR^e)^{\oplus 2}} }_{\times m[\tilde{a}]} f_{\times}^{\tilde{a}}[ (x_0, y_0), \Pi^{\mu, \nu}] 
			\cdot 
			\bigotimes_{p=1}^{|\tilde{a}|} ( y_{\tilde{a}_p} - x_{\tilde{a}_p}) \cdot d\big( \Pi^{\mu, \nu}\big)^{\times m[\tilde{a}]} \Big( (\boldsymbol{x}, \boldsymbol{y})_{m[\tilde{a}]}\Big)
		\end{align}
		However, we also have that $a^{-} \notin A_{+}^{\gamma, \alpha', \beta}[0]$ which implies that $a\in A_{\ast}^{\gamma, \alpha', \beta}[0]$. On the other hand $\scG_{\alpha, \beta}[(a^-, 0)] \in (\gamma-\beta, \gamma - \alpha']$ which implies that $(a^-, 0) \in A_{+}^{\gamma, \alpha, \beta}[0]$. Hence, $\tilde{a}\in A_{\times}^{\gamma, \alpha', \beta}[0]$. Combining this with Equation \eqref{eq:FullTaylorExpansion_ast_1.3} provides the inductive hypothesis.
	\end{proof}
	
    \begin{bibdiv}
        \begin{biblist}
        
        \bib{Ambrosio2008Gradient}{book}{
              author={Ambrosio, Luigi},
              author={Gigli, Nicola},
              author={Savar\'{e}, Giuseppe},
               title={Gradient flows in metric spaces and in the space of probability
          measures},
             edition={Second},
              series={Lectures in Mathematics ETH Z\"{u}rich},
           publisher={Birkh\"{a}user Verlag, Basel},
                date={2008},
                ISBN={978-3-7643-8721-1},
              review={\MR{2401600}},
        }
        
        \bib{2019arXiv180205882.2B}{article}{
              author={Bailleul, Isma\"{e}l},
              author={Catellier, R\'{e}mi},
              author={Delarue, Fran\c{c}ois},
               title={Solving mean field rough differential equations},
                date={2020},
             journal={Electron. J. Probab.},
              volume={25},
               pages={Paper No. 21, 51},
                 url={https://doi.org/10.1214/19-ejp409},
              review={\MR{4073682}},
        }
        
        \bib{buckdahn2017mean}{article}{
              author={Buckdahn, Rainer},
              author={Li, Juan},
              author={Peng, Shige},
              author={Rainer, Catherine},
               title={Mean-field stochastic differential equations and associated
          {PDE}s},
                date={2017},
                ISSN={0091-1798},
             journal={Ann. Probab.},
              volume={45},
              number={2},
               pages={824\ndash 878},
                 url={https://doi.org/10.1214/15-AOP1076},
              review={\MR{3630288}},
        }
        
        \bib{chassagneux2014classical}{article}{
              author={Chassagneux, Jean-Fran\c{c}ois},
              author={Crisan, Dan},
              author={Delarue, Fran\c{c}ois},
               title={A probabilistic approach to classical solutions of the master
          equation for large population equilibria},
                date={2022},
                ISSN={0065-9266},
             journal={Mem. Amer. Math. Soc.},
              volume={280},
              number={1379},
               pages={v+123},
                 url={https://doi.org/10.1090/memo/1379},
              review={\MR{4493576}},
        }
        
        \bib{CarmonaDelarue2017book1}{book}{
              author={Carmona, Ren\'{e}},
              author={Delarue, Fran\c{c}ois},
               title={Probabilistic theory of mean field games with applications. {I}},
              series={Probability Theory and Stochastic Modelling},
           publisher={Springer, Cham},
                date={2018},
              volume={83},
                ISBN={978-3-319-56437-1; 978-3-319-58920-6},
                note={Mean field FBSDEs, control, and games},
              review={\MR{3752669}},
        }
        
        \bib{CarmonaDelarue2017book2}{book}{
              author={Carmona, Ren\'{e}},
              author={Delarue, Fran\c{c}ois},
               title={Probabilistic theory of mean field games with applications.
          {II}},
              series={Probability Theory and Stochastic Modelling},
           publisher={Springer, Cham},
                date={2018},
              volume={84},
                ISBN={978-3-319-56435-7; 978-3-319-56436-4},
                note={Mean field games with common noise and master equations},
              review={\MR{3753660}},
        }
        
        \bib{Rodney2012Calculus}{book}{
              author={Coleman, Rodney},
               title={Calculus on normed vector spaces},
              series={Universitext},
           publisher={Springer, New York},
                date={2012},
                ISBN={978-1-4614-3893-9},
                 url={https://doi.org/10.1007/978-1-4614-3894-6},
              review={\MR{2954390}},
        }
        
        \bib{dos2022Ito}{article}{
              author={dos Reis, Gon\c{c}alo},
              author={Platonov, Vadim},
               title={{I}t\^o-{W}entzell-{L}ions {F}ormula for {M}easure {D}ependent
          {R}andom {F}ields under {F}ull and {C}onditional {M}easure {F}lows},
                date={2022},
             journal={Potential Anal},
                 url={https://doi.org/10.1007/s11118-022-10012-1},
        }
        
        \bib{2021Probabilistic}{article}{
              author={Delarue, Francois},
              author={Salkeld, William},
               title={Probabilistic rough paths {I} {L}ions trees and coupled {H}opf
          algebras},
                date={2021},
             journal={arXiv preprint},
              eprint={2106.09801v2},
        }
        
        \bib{salkeld2021Probabilistic2}{article}{
              author={Delarue, Francois},
              author={Salkeld, William},
               title={Probabilistic rough paths {II} lions-taylor expansions and random
          controlled rough paths},
                date={2022},
             journal={arXiv preprint},
              eprint={2203.01185v1},
        }
        
        \bib{salkeld2022ExamplePRP}{article}{
              author={Delarue, Francois},
              author={Salkeld, William},
               title={An example driven introduction to {P}robabilistic rough paths},
                date={2023},
             journal={arXiv preprint, accepted in {S}{\'e}minaire de Probabilit{\'e}s},
              eprint={2106.09801v3},
        }
        
        \bib{GangboDifferentiability2019}{article}{
              author={Gangbo, Wilfrid},
              author={Tudorascu, Adrian},
               title={On differentiability in the {W}asserstein space and
          well-posedness for {H}amilton-{J}acobi equations},
                date={2019},
                ISSN={0021-7824},
             journal={J. Math. Pures Appl. (9)},
              volume={125},
               pages={119\ndash 174},
                 url={https://doi.org/10.1016/j.matpur.2018.09.003},
              review={\MR{3944201}},
        }
        
        \bib{hairer2014theory}{article}{
              author={Hairer, M.},
               title={A theory of regularity structures},
                date={2014},
                ISSN={0020-9910},
             journal={Invent. Math.},
              volume={198},
              number={2},
               pages={269\ndash 504},
                 url={https://doi.org/10.1007/s00222-014-0505-4},
              review={\MR{3274562}},
        }
        
        \bib{Jordan1998variation}{article}{
              author={Jordan, Richard},
              author={Kinderlehrer, David},
              author={Otto, Felix},
               title={The variational formulation of the {F}okker-{P}lanck equation},
                date={1998},
                ISSN={0036-1410},
             journal={SIAM J. Math. Anal.},
              volume={29},
              number={1},
               pages={1\ndash 17},
                 url={https://doi.org/10.1137/S0036141096303359},
              review={\MR{1617171}},
        }
        
        \bib{Ren2019Bismut}{article}{
              author={Ren, Panpan},
              author={Wang, Feng-Yu},
               title={Bismut formula for {L}ions derivative of distribution dependent
          {SDE}s and applications},
                date={2019},
                ISSN={0022-0396},
             journal={J. Differential Equations},
              volume={267},
              number={8},
               pages={4745\ndash 4777},
                 url={https://doi.org/10.1016/j.jde.2019.05.016},
              review={\MR{3983053}},
        }
        
        \bib{salkeld2022LionsTrees}{article}{
              author={Salkeld, William},
               title={Coupled bialgebras and lions trees},
                date={2023},
             journal={arXiv preprint},
              eprint={2303.17576},
        }
        
        \bib{TseHigher2021}{article}{
              author={Tse, Alvin},
               title={Higher order regularity of nonlinear {F}okker-{P}lanck {PDE}s
          with respect to the measure component},
                date={2021},
                ISSN={0021-7824},
             journal={J. Math. Pures Appl. (9)},
              volume={150},
               pages={134\ndash 180},
                 url={https://doi.org/10.1016/j.matpur.2021.04.005},
              review={\MR{4248465}},
        }
        
        \end{biblist}
    \end{bibdiv}

	\bibliographystyle{plain}
	
\end{document}